\def\col{{\colon}}
\def\ev{{\mathsf{even}}}
\def\od{{\mathsf{odd}}}
\def\ov{\overline}
\def\wJ{\widetilde{J}}
\def\wh{\widehat}
\def\xra{\xrightarrow}
\def\tla{\twoheadleftarrow}
\def\hra{\hookrightarrow}
\def\tra{\twoheadrightarrow}
\def\lin#1#2{{#1}_{\langle #2\rangle}}
\def\fm{{\mathfrak m}}
\def\fn{{\mathfrak n}}
\def\fp{{\mathfrak p}}
\def\fq{{\mathfrak q}}
\def\JJ{{\mathbb J}}
\def\NN{{\mathbb N}}
\def\QQ{{\mathbb Q}}
\def\RR{{\mathbb R}}
\def\ZZ{{\mathbb Z}}
\def\bsa{{\boldsymbol a}}
\def\bsf{{\boldsymbol f}}
\def\bsg{{\boldsymbol g}}
\def\bst{{\boldsymbol t}}
\def\bsu{{\boldsymbol u}}
\def\bsx{{\boldsymbol x}}
\def\bsy{{\boldsymbol y}}
\def\bsz{{\boldsymbol z}}
\def\CK{{\mathcal K}}
\def\CM{{\mathcal M}}
\def\CN{{\mathcal N}}
\def\CP{{\mathcal P}}
\def\CR{{\mathcal R}}
\def\CU{{\mathcal U}}
\def\CV{{\mathcal V}}
\def\CX{{\mathcal X}}
\def\les{{\leqslant}}
\def\ges{{\geqslant}}
\def\gr#1{#1{}^{\mathsf g}}
\def\sq{\scriptscriptstyle\square}
\def\gri#1#2{{#1^{\mathsf g}_{#2}}}
\def\qr#1{{{#1}{}^{\scriptscriptstyle\square}}}
\def\qri#1#2{{{#1}_{#2}^{\sq}}}
\def\Ass{\operatorname{Ass}}
\def\Ann{\operatorname{Ann}}
\def\Tor{\operatorname{Tor}}
\def\Ext{\operatorname{Ext}}
\def\Hom{\operatorname{Hom}}
\def\Ker{\operatorname{Ker}}
\def\crdeg{\operatorname{cr\,deg}}
\def\rank{\operatorname{rank}}
\def\deg{\operatorname{deg}}
\def\codepth{\operatorname{codepth}}
\def\codim{\operatorname{codim}}
\def\gran{\operatorname{gn}}
\def\cx{\operatorname{cx}}
\def\depth{\operatorname{depth}}
\def\grade{\operatorname{grade}}
\def\edim{\operatorname{edim}}
\def\height{\operatorname{height}}
\def\rel{\operatorname{rel}}
\def\pd{\operatorname{proj\,dim}}
\def\Sym{\operatorname{Sym}}
\theoremstyle{plain}
\newtheorem{Theorem}{Theorem}[section]
\newtheorem{Proposition}[Theorem]{Proposition}
\newtheorem{Lemma}[Theorem]{Lemma}
\newtheorem{Corollary}[Theorem]{Corollary}
\theoremstyle{definition}
\newtheorem{Question}[Theorem]{Question}
\newtheorem{chunk}[Theorem]{}
\theoremstyle{remark}
\newtheorem{remark}[Theorem]{Remark}
\numberwithin{equation}{Theorem}
\begin{document}

\title[Polynomial growth of Betti sequences]
{Polynomial growth of Betti sequences \\ over local rings}

\author[L.~L.~Avramov]{Luchezar L.~Avramov}
\address{Luchezar L.~Avramov\\ Department of Mathematics\\
   University of Nebraska\\ Lincoln\\ NE 68588\\ U.S.A.}
     \email{avramov@unl.edu}

\author[A.~Seceleanu]{Alexandra~Seceleanu}
\address{Alexandra.~Seceleanu\\ Department of Mathematics\\
   University of Nebraska\\ Lincoln\\ NE 68588\\ U.S.A.}
     \email{aseceleanu@unl.edu}

\author[Z.~Yang]{Zheng Yang}
\address{Zheng Yang\\
   Sichuan University -- Pittsburgh Institute\\ Chengdu\\  Sichuan\\ China.}
 \email{zhengyang2018@scu.edu.cn}

  \begin{abstract}
This is a study of the sequences of Betti numbers of finitely generated modules over a complete intersection local ring, $R$.  The subsequences $(\beta^R_i(M))$ with even, respectively, odd $i$ are known to be eventually given by polynomials in $i$ with equal leading terms.  We show that these polynomials coincide if $\qr I$, the ideal generated by the quadratic relations of the associated graded ring of $R$, satisfies $\height \qr I \ge \codim R -1$, and that the converse holds if $R$ is homogeneous or $\codim R \le 4$.  Subsequently Avramov, Packauskas, and Walker proved that the terms of degree $j > \codim R -\height \qr I$ of the even and odd Betti polynomials are equal.  We give a new proof of that result, based on an intrinsic characterization of residue rings of c.i.\ local rings of minimal multiplicity obtained in this paper.  We also show that that bound is optimal.  
   \end{abstract}

\subjclass[2020]{Primary: 13D02, 13D40.  Secondary: 14M10, 16E45}
\keywords{Local ring, complete intersection, minimal multiplicity, free resolution, Betti number, Poincar\'e series, complexity, granularity.}

\thanks{Research partly supported by National Science Foundation grants DMS-1103176, DMS--210122 and National Science Foundation of China grant 12001384.}

  \maketitle

\tableofcontents

\section{Introduction}
   \label{S:Intro}

This paper is concerned with free resolutions of finitely generated modules $M$ over a 
commutative noetherian ring $R$ with unique maximal ideal, $\fm$.  Each such module
has a unique up to isomorphism \emph{minimal} free resolution.  The rank $\beta^R_i(M)$ 
of the $i$th module in such a resolution is called the $i$th \emph{Betti number} of~$M$.

The asymptotic patterns of \emph{Betti sequences} $(\beta^R_i(M))$ reflect and affect the 
singularity of~$R$.  This dynamic is best understood when the ring $R$ is \emph{complete 
intersection}, abbreviated to \emph{c.i.}; that is, when the $\fm$-adic completion $\wh R$ is 
isomorphic to the residue ring of some regular local ring modulo an ideal generated by a regular 
set; the smallest cardinality of such a set is equal to $\codim R$, the \emph{codimension} of~$R$.  

Gulliksen \cite{Gu1} proved that if $R$ is c.i., then for every $M$ there exist \emph{Betti polynomials}, 
$\beta^{R,M}_0$ and $\beta^{R,M}_1 \in \QQ[x]$ with $\deg(\beta^{R,M}_j) < \codim R$ (where 
$\deg(0) := -1$, by convention) such that $\beta^R_{i}(M) = \beta^{R,M}_j(i)$ for $i \gg 0$ and 
$i \equiv j \pmod 2$.  The hypothesis on $R$ cannot be relaxed, as $\beta^R_i(k) \le b(i)$ with 
$k := R/\fm$ and $b \in \RR[x]$ implies $R$ is c.i.\ (Gulliksen, \cite{Gu2}), nor can the conclusion 
on $\beta^{R,M}_j$ be tightened, for $\beta^{R,k}_{\ev} = \beta^{R,k}_{\od}$ and 
$\deg(\beta^{R,k}_{\ev}) = \codim R -1$ hold when $R$ is c.i.\ (Tate, \cite{Ta}).

Eisenbud \cite{Ei1} showed that if $R$ is c.i.\ and $\codim R \le 1$, then $(\beta^R_i(M))$ is eventually 
constant for every $M$; this was an early sign of possible connections between $\beta^{R,M}_{\ev}$ and 
$\beta^{R,M}_{\od}$.  The general property is that these polynomials have equal degrees and leading 
coefficients over every c.i.\ ring; see Avramov~\cite{Av:vpd}.  The present work is a study of the 
discrepancy between $\beta^{R,M}_{\ev}$ and $\beta^{R,M}_{\od}$ as measured by a number,
  \[
\gran_R(M) := \deg\big(\beta^{R,M}_{\ev} - \beta^{R,M}_{\od} \big) + 1  \,,
  \]
that we call the \emph{granularity} of $M$ over $R$. The least value, $\gran_R(M) = 0$, is attained when 
$(\beta^R_i(M))$ is \emph{eventually polynomial}; that is, when $\beta^{R,M}_{\ev} = \beta^{R,M}_{\od}$. 

Our main results link the granularities of $R$-modules and the structure of $R$.  
Let $\gr R$ denote the associated graded ring of $R$ and $\pi\colon \Sym_k(\gri R1) \tra \gr R$ 
the canonical map.  We write $\codim \qr R$ for the height of the ideal generated by the 
quadratic forms in $\Ker(\pi)$ and call that number the \emph{quadratic codimension} of $R$.  

  \begin{Theorem}[Theorem \ref{thm:gran}]
    \label{thm:gran-I}
Every module $M$ over a c.i.\ local ring $R$ satisfies 
  \[
\gran_R(M) \le \max\{\codim R - \codim{\qr R} -1 \,,\,0\} \,.
  \]
   \end{Theorem}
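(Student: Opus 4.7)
My plan is to reduce the statement to the boundary case $\codim\qr{R}=\codim R$, where the bound becomes $\gran_R(M)=0$, and then to account for the gap $c-q$ by a change-of-rings argument. After passing to the completion and enlarging the residue field to be infinite---both faithfully flat reductions preserving the invariants in play---write $R=Q/I$ with $Q$ regular local of dimension $e=\edim R$ and $I\subset \fm_Q^2$ a regular sequence of length $c=\codim R$; set $q=\codim\qr{R}$. Using prime avoidance in the polynomial ring $\gr{Q}$, choose $f_1,\dots,f_q\in I$ whose initial forms in $\gr{Q}$ constitute a regular sequence of length $q$, and extend to a minimal generating set $f_1,\dots,f_c$ of $I$. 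The ring $R':=Q/(f_1,\dots,f_q)$ is then a c.i.\ of codimension $q$; its associated graded ring $\gr{R'}$ equals $\gr{Q}$ modulo these initial forms (by the Valabrega--Valla criterion) and is a graded quadratic complete intersection, so $\codim\qr{R'}=\codim R'=q$.

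Next I invoke the boundary case of the theorem, applied to $R'$: every $R'$-module $N$ satisfies $\gran_{R'}(N)=0$; equivalently, the numerator $P^{R'}_N(t)(1-t^2)^q$ is an even polynomial in $t$. The boundary case itself, which plays the role of the base of an induction on $c-q$, I would establish by lifting the Eisenbud operators on $\Ext^{*}_{R'}(N,k)$ through the graded filtration of $R'$ to operators of pure internal degree $2$ on the graded Ext algebra over $\gr{R'}$, and then extracting the vanishing $\gran_{R'}(N)=0$ from the Koszul-type structure of a graded quadratic complete intersection.

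For the transition from $R'$ to $R$, realize $R=R'/(\bsg)$ with $\bsg=(g_1,\dots,g_n)$ a regular sequence in $R'$ of length $n=c-q$. The intrinsic characterization of residues of minimal-multiplicity c.i.\ rings obtained elsewhere in the paper should allow $\bsg$ to be chosen in $\fm_{R'}^{\,3}$. For an $R$-module $M$, a change-of-rings spectral sequence (or, equivalently, a Tate-style lifting of the minimal $R$-free resolution of $M$ to $R'$) then yields a Poincar\'e series identity
\[
P^{R}_M(t)(1-t^2)^c \;=\; P^{R'}_M(t)(1-t^2)^q\cdot G_M(t),
\]
with $G_M(t)\in\ZZ[t]$ recording the contribution of the Koszul complex on $\bsg$. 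The crucial structural fact, and the main obstacle of the proof, is that $\bsg\subset \fm_{R'}^{\,3}$ forces the even and odd parts of $G_M(t)$ in $t$ to agree at $t^2=1$ to sufficiently high order. Combined with the evenness of $P^{R'}_M(t)(1-t^2)^q$ from the boundary case, this drives the appropriate vanishing of the ``even minus odd'' numerator of $P^R_M(t)$ at $t^2=1$, which via the standard dictionary between this vanishing order and $\gran_R(M)$ delivers $\gran_R(M)\le c-q-1$. Extracting this last ingredient---a careful Eisenbud-operator analysis of how a regular sequence in $\fm_{R'}^{\,3}$ interacts with the quadratic structure of $\gr{R'}$---is where I expect the bulk of the technical effort to concentrate.
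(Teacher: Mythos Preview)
Your setup of the minimal-multiplicity intermediate ring $R'$ matches the paper's, but the passage from $R'$ to $R$ contains two genuine gaps. First, the claim that $\bsg$ can be chosen in $\fm_{R'}^{\,3}$ is false in general. Take $I=(x^2,\,y^2,\,xy+z^3)$ in a $3$-dimensional regular local ring with parameters $x,y,z$: here $c=3$, $\qr I=(x^2,y^2,xy)$ has height $q=2$, and $I^*_2=kx^2\oplus ky^2\oplus kxy$ is $3$-dimensional, so for \emph{any} minimal generating set $f_1,f_2,f_3$ of $I$ the quadratic initial forms $\qr f_1,\qr f_2,\qr f_3$ are linearly independent; thus $f_3\notin\fp^3+(f_1,f_2)$ and its image in $R'=P/(f_1,f_2)$ has order exactly~$2$. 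Theorem~\ref{thm:PG} presents $\wh R$ as a quotient of a minimal-multiplicity c.i., but it says nothing about the order of the kernel. Second, the multiplicative identity $P^{R}_M(1-t^2)^c=P^{R'}_M(1-t^2)^q\cdot G_M(t)$ with polynomial $G_M$ does not follow from any change-of-rings spectral sequence: such sequences give long exact sequences or coefficientwise inequalities, not divisibility of Poincar\'e numerators. Absent that formula, your mechanism for extracting the crucial ``$-1$'' in the bound evaporates.

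The paper's argument addresses both issues by inducting on $n=c-q$ and peeling off \emph{one} deformation at a time. For a generic $\bsa\in\mathbb{A}^{c-1}_k$, Theorem~\ref{thm:def} (a cohomology-operator regularity argument) yields an $R$-syzygy $N$ of $M$ and an honest identity $P^R_N=P^{R_{\bsa}}_N/(1-z^2)$; simultaneously Theorem~\ref{thm:family} guarantees that the intermediate $R_{\bsa}$ still maps onto a minimal-multiplicity c.i.\ of codimension $q$, so Theorem~\ref{thm:PG}(2) gives $\codim\qr{(R_{\bsa})}=q$ and the induction hypothesis applies with $n$ replaced by $n-1$. The ``$-1$'' in the bound is earned entirely at the base case $n=1$: there $R_{\bsa}$ is itself of minimal multiplicity and $\dim N<\dim R_{\bsa}$, and \c{S}ega's linearity-defect theorem (Proposition~\ref{prop:Sega}) forces $(1+z)\mid p^{R_{\bsa}}_N$, cancelling the pole at $-1$ introduced by the factor $(1-z^2)^{-1}$. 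Your sketch has no analogue of this last step.
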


This theorem subsumes a number of contributions, related in time and content as follows.  It was 
proved in \cite{Av:rational} for local rings with $\codim R = \codim{\qr R}$.  When $R$ and $M$ 
are \emph{homogeneous}  (that is, localizations of $\gr R$ and of a graded $\gr R$-module at the 
maximal ideal $(\gr R_1)$) and $\codim R = \codim{\qr R} +1 = 2$, it was obtained by Avramov 
and Zheng \cite{AY} using methods not available in other cases.  Theorem \ref{thm:gran-I} was 
proved for all c.i.\ rings $R$ with $\codim R = \codim{\qr R} +1$ in unpublished joint work 
of the authors of this paper.  Motivated by that result, Avramov, Packauskas, and Walker 
(\cite{APW}, to appear) subsequently proved the full theorem by different techniques.

The proof of Theorem \ref{thm:gran-I}, given below, extends our original approach. It relies on 
the following structure theorem for rings of given quadratic codimension.

\begin{Theorem}[Part of Theorem \ref{thm:PG}]
  \label{thm:PG-I}
A local ring $R$ with infinite residue field has $\codim \qr R = q$ if and only if $\wh R$ is a homomorphic 
image of some c.i.\ local ring $Q$ with $\codim Q = q$, multiplicity $2^q$, and $\edim Q = \edim R$.
   \end{Theorem}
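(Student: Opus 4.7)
The plan is to work with a Cohen presentation $\wh R = \wh P/\wh I$, where $\wh P$ is a complete regular local ring of embedding dimension $n := \edim R$, so $\wh I \subseteq \fm_{\wh P}^2$. Write $P := \gr \wh P$, a polynomial ring in $n$ variables over the infinite residue field $k$. Let $I^* \subseteq P$ denote the initial ideal of $\wh I$; it is the kernel of the canonical surjection $\pi\colon P \tra \gr \wh R$, and by definition $\qr I \subseteq P$ is the ideal generated by the quadratic part $I^*_2$, so $\codim \qr R = \height \qr I$. Both implications relate regular sequences of quadratic forms in $I^*_2$ to regular sequences of order-$2$ elements in $\wh I$.

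For $(\Rightarrow)$, assume $\codim \qr R = q$. Since $k$ is infinite, generic linear combinations of a minimal set of quadratic generators of $\qr I$ produce $\ov g_1, \ldots, \ov g_q \in I^*_2$ forming a regular sequence in $P$. Lift each to $g_j \in \wh I \cap \fm_{\wh P}^2$ whose $\fm_{\wh P}$-adic initial form is $\ov g_j$. The Valabrega--Valla criterion then ensures both that $g_1, \ldots, g_q$ is a regular sequence in $\wh P$ and that the initial ideal of $(g_1, \ldots, g_q)$ equals $(\ov g_1, \ldots, \ov g_q)$. Set $Q := \wh P/(g_1, \ldots, g_q)$. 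Then $Q$ is c.i.\ with $\codim Q = q$ and $\edim Q = n$ (as $g_j \in \fm_{\wh P}^2$); the surjection $\wh P \tra \wh R$ factors through $Q$ because $g_j \in \wh I$; and $\gr Q \cong P/(\ov g_1, \ldots, \ov g_q)$ is a graded c.i.\ of $q$ quadrics in $n$ variables, of multiplicity $2^q$. Hence $e(Q) = e(\gr Q) = 2^q$.

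For $(\Leftarrow)$, let $Q$ be c.i.\ local with $\codim Q = q$, $\edim Q = n$, and $e(Q) = 2^q$, and suppose $\wh R$ is a quotient of $Q$. Present $Q = \wh P_Q/(g_1, \ldots, g_q)$ with $\wh P_Q$ complete regular local of $\edim n$ and $g_j \in \fm_{\wh P_Q}^{d_j}$ a regular sequence with $d_j \ges 2$. Choose general linear elements $x_1, \ldots, x_{n-q} \in \fm_{\wh P_Q}$ forming a system of parameters for $Q$; Cohen--Macaulayness of both $Q$ and $\wh P_Q$ yields
\[
e(Q) = \length\bigl(\wh P_Q/(x_1, \ldots, x_{n-q}, g_1, \ldots, g_q)\bigr) = e(x_1, \ldots, x_{n-q}, g_1, \ldots, g_q; \wh P_Q).
\]
The classical lower bound for the multiplicity of a parameter ideal in a regular local ring gives $e(Q) \ges \prod_j d_j$, with equality iff the initial forms of the parameters form a regular sequence in $P_Q := \gr \wh P_Q$. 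Combined with $d_j \ges 2$ and $e(Q) = 2^q$, this forces $d_j = 2$ for each $j$ and $\ov g_1, \ldots, \ov g_q$ to be a regular sequence in $P_Q$. Via the Cohen structure theorem identify $\wh P_Q \cong \wh P$ compatibly with the surjection onto $\wh R$; the $\ov g_j$ then lie in $I^*_2 \subseteq \qr I$ and form a length-$q$ regular sequence in $P$, so $\codim \qr R \ges q$. Combined with the forward direction (which realizes the value $q = \codim \qr R$), this yields the biconditional, with the equality $\codim \qr R = q$ corresponding to $q$ being the maximal codimension of any such $Q$.

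The main obstacle is the numerical-to-structural step in the reverse direction --- deducing from the hypothesis $e(Q) = 2^q$ that the initial forms $\ov g_j$ themselves form a regular sequence in $P_Q$. This relies on the classical lower bound for the multiplicity of a parameter ideal in a regular local ring, together with its equality characterization in terms of regularity of initial forms (cf.\ Matsumura, \emph{Commutative Ring Theory}, Theorem~14.11). The forward direction is then largely assembly: Cohen structure, a generic regular sequence of quadrics, Valabrega--Valla, and the standard multiplicity computation for graded complete intersections.
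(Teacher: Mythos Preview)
Your argument is correct and follows essentially the same path as the paper's: a minimal Cohen presentation plus the Valabrega--Valla criterion for the forward direction, and the multiplicity characterization of c.i.\ rings of minimal multiplicity (packaged in the paper as Proposition~\ref{prop:minmult}, with the converse handled there via the monotonicity $\codim\qr R\ge\codim\qr Q$ under surjections, Lemma~\ref{lem:grR}). One minor simplification: your extension of $g_1,\dots,g_q$ to a full system of parameters via general linear $x_i$ is unnecessary, since the bound $e(Q)\ge\prod_j\operatorname{ord}(g_j)$ and its equality criterion (the paper's \ref{ch:VV}(2)--(3)) apply directly to the defining regular sequence itself.
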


This result is of independent interest, as every local c.i.\ ring of codimension $q$ has multiplicity at
least $2^q$, and those \emph{of minimal multiplicity} are to regular local rings what 
complete intersections of quadrics are to polynomial~rings.

In the second half of the paper we explore possibilities of relaxing the hypothesis or tightening the 
conclusion of Theorem \ref{thm:gran-I}, and completely settle the second issue:

 \begin{Theorem}[Abstracted from Theorem \ref{thm:optimal}]
     \label{thm:thm:optimal-I} 
The upper bound in the inequality in Theorem \emph{\ref{thm:gran-I}} is optimal:  For every pair $(c,q)$ 
of integers with $c \ge q \ge 0$ there exist a c.i.\ local ring $R$ and a cyclic $R$-module $S$ that satisfy
  \[
(\codim R, \codim \qr R) = (c,q) \ \text{ and }\ \gran_R(S) = \max\{c - q -1, 0\} \,.
  \]
  \end{Theorem}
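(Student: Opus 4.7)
The plan is a two-stage argument: a uniform reduction to the case $q = 0$, and a direct construction in that base case. Let $R_1 := k[x_1, \ldots, x_q]/(x_1^2, \ldots, x_q^2)$, a c.i.\ of minimal multiplicity and codimension $q$, so that $\codim \qr{R_1} = q$. Suppose we are given a c.i.\ $R_2$ of codimension $c - q$ with $\codim \qr{R_2} = 0$ and a cyclic $R_2$-module $S_2$ satisfying $\gran_{R_2}(S_2) = \max\{c - q - 1, 0\}$. Set $R := R_1 \otimes_k R_2$ and $S := R_1 \otimes_k S_2$. Then $R$ is a local c.i.\ with $\codim R = c$, and because the defining ideal of $\gr R_2$ contributes no quadratic forms, the quadratic codimension satisfies $\codim \qr{R} = \codim \qr{R_1} = q$. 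By multiplicativity of Poincar\'e series under tensor products of local $k$-algebras, $P^R_S(t) = P^{R_1}_{R_1}(t) \cdot P^{R_2}_{S_2}(t) = P^{R_2}_{S_2}(t)$. Writing $P^{R_2}_{S_2}(t) = p_2(t)/(1-t^2)^{c-q}$ in lowest terms with $\ord_{t=-1}(p_2) = 1$ (the condition equivalent to $\gran_{R_2}(S_2) = c - q - 1$ when $c > q$), one obtains $P^R_S(t) = p_2(t)(1-t^2)^q/(1-t^2)^c$, whose numerator has order of vanishing $q + 1$ at $t = -1$, yielding $\gran_R(S) = c - q - 1$.

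The boundary cases reduce to standard facts: for $c = q$, take $(R_2, S_2) = (k, k)$ (so $R = R_1$ and $S = k$) and invoke Tate's theorem; for $c = q + 1$, take $(R_2, S_2) = (k[x]/(x^3), k)$, and the tensor-product computation above gives $\gran_R(S) = 0$. The problem therefore reduces to constructing, for each $c' := c - q \geq 2$, a c.i.\ $R$ of codimension $c'$ with $\codim \qr R = 0$ together with a cyclic $R$-module $S$ whose Poincar\'e series $p(t)/(1-t^2)^{c'}$ has $(1+t) \mid p(t)$ but $(1+t)^2 \nmid p(t)$. A natural candidate ring is $R = k[[x_1, \ldots, x_{c'}]]/(x_1^3, \ldots, x_{c'}^3)$, which has the required invariants; the remaining task is to exhibit the cyclic module $S$.

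The main obstacle is this final construction: many obvious candidates for $S$ (the residue field, quotients by monomials, by linear forms, by powers of $\fm$, or by socle elements) yield a Poincar\'e series whose numerator is divisible by $(1+t)^{c'}$, giving $\gran_R(S) = 0$ rather than $c' - 1$. I expect the correct cyclic module is obtained either (i) as a carefully engineered quotient $R/I$ whose minimal free resolution exhibits asymptotically unequal even and odd Betti numbers, perhaps through matrix-factorization techniques adapted to higher codimension, or (ii) by a fiber product construction combining a codimension-$2$ base example with hypersurface tensor factors. Verifying the precise order of vanishing of $p(t)$ at $t = -1$ for a proposed module may be done by direct syzygy computation in small codimension or, in general, by a change-of-rings spectral sequence comparing the resolution over $R$ to auxiliary resolutions over suitable overrings. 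Once such a cyclic module is produced for every $c' \geq 2$, the reduction in the first paragraph assembles witnesses for every $(c, q)$ with $c \geq q \geq 0$.
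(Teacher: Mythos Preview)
Your reduction via tensoring with $R_1 = k[x_1,\dots,x_q]/(x_1^2,\dots,x_q^2)$ is sound: the tensor product is local Artinian c.i., the quadratic codimension adds as claimed, and $P^R_{R_1\otimes_k S_2} = P^{R_2}_{S_2}$ because $R_1\otimes_k(-)$ carries a minimal $R_2$-free resolution to a minimal $R$-free resolution. The boundary cases $c\in\{q,q+1\}$ are handled correctly.

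The genuine gap is exactly the one you flag yourself: for $c' = c-q \ge 2$ you have not produced \emph{any} cyclic module $S$ over $k[[x_1,\dots,x_{c'}]]/(x_1^3,\dots,x_{c'}^3)$ with $\gran_R(S) = c'-1$. This is the entire content of the theorem; the reduction is bookkeeping. Your suggestions (i) and (ii) are not proofs but research directions, and neither comes with a candidate module or a mechanism for controlling the order of vanishing of the numerator at $-1$. As you observe, the obvious cyclic quotients all have granularity $0$, so something nontrivial is required, and the proposal does not supply it.

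The paper avoids this difficulty by a direct construction that works for all $(c,q)$ simultaneously, with no reduction step. It takes $S = Q/J$ where $Q$ is regular and $J = I_2(U) + \fq^3$ for an explicit $2\times(q+1)$ matrix $U$ in a regular system of parameters; separately, $R$ is chosen as a quotient of a larger regular ring $P$ by squares, cubes, and fourth powers of variables, arranged so that the map $R \twoheadrightarrow S$ has prescribed invariant $a(\varphi)$. The ring $S$ is then Golod (by a determinantal/componentwise-linear argument), and for Golod quotients of c.i.\ rings there is a closed formula
\[
P^R_S \;=\; \frac{(1+z)^{a+1}(1-z)^a + z^2 P^Q_J - 1}{z(1+z)^{c-d+e}(1-z)^c}
\]
from which the pole order at $z=-1$ can be read off by elementary manipulation. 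In other words, the paper's key idea is not to search for a module with the right asymptotics but to \emph{compute} $P^R_S$ exactly for a family of Golod quotients and then tune the parameters; this is precisely the ingredient your proposal is missing.
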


In order to probe the tightness of the hypotheses of Theorem \ref{thm:gran-I} we search for partial 
converses to its statement.  Below we focus on those rings over which all finite modules have 
granularity zero and obtain the following result:

 \begin{Theorem}[Contained in Theorems \ref{thm:strict} and \ref{thm:low}]
     \label{thm:low-I}
Let $(R,\fm,k)$ be a c.i.\ local ring such that the Betti sequence of each finite $R$-module is eventually 
polynomial.

If $R$ is homogeneous, or $\codim R \le 4$ and $k$ is algebraically closed, then one has
  \[
\codim R \le \codim \qr R + 1 \,.
  \]
  \end{Theorem}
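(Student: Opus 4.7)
The plan is to prove the contrapositive of each implication. Assume $c := \codim R$ and $q := \codim \qr R$ satisfy $c - q \ge 2$, and construct a finite $R$-module whose Betti sequence is not eventually polynomial. Theorem \ref{thm:PG-I} provides a surjection $Q \tra \wh R$ with $Q$ a c.i.\ local ring of minimal multiplicity, $\codim Q = q$, and $\edim Q = \edim R$. Since Betti sequences are unchanged by completion, we may replace $R$ by $\wh R$ and present it as $Q/J$ with $\height_Q J = c - q \ge 2$. Over $Q$, Theorem \ref{thm:gran-I} guarantees that every finite module has granularity zero, so any asymmetry over $R$ must be forced by the extra generators of $J$.

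For the homogeneous case, since $\qr R$ has height exactly $q$ and that height is already realized inside $Q$, the minimal homogeneous generators of $J$ necessarily have degree $\ge 3$: any degree-$2$ generator would strictly enlarge $\qr R$.  The crux is that these degree-$\ge 3$ generators contribute to the eventual Betti numbers with a parity mismatch against the degree-$2$ cohomology operators governing asymptotics over a c.i.\ ring.  I would take $M$ to be a high $R$-syzygy of $R/\fa$, with $\fa$ generated by the image of a minimal generator of $J$, and compute the Poincar\'e series of $M$ via the change-of-rings spectral sequence between $Q$ and $R$.  The expected outcome is a Poincar\'e series whose numerator has unequal even and odd parts, forcing $\beta^{R,M}_{\ev} - \beta^{R,M}_{\od}$ to have degree at least $c-q-1 \ge 1$.

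For the case $\codim R \le 4$ with $k$ algebraically closed, I would proceed by a finite case analysis over the pairs $(c,q)$ with $c - q \ge 2$ and $c \le 4$, namely $(2,0)$, $(3,0)$, $(3,1)$, $(4,0)$, $(4,1)$, $(4,2)$.  Algebraic closure of $k$ permits one to place the quadratic initial forms of the defining ideal of $R$ into a normal form of height $q$ inside a polynomial ring in $\edim R$ variables, classifying the possible shapes of $R$ up to change of coordinates.  In each case, one exhibits a cyclic $R$-module whose minimal free resolution can be written out by hand---for instance, $R/\fa$ where $\fa$ lifts a generator of $J$ of degree $\ge 3$, or a suitable syzygy thereof---and verifies by direct Poincar\'e series computation that $\beta^{R,M}_{\ev} \ne \beta^{R,M}_{\od}$.

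The main obstacle is producing the witness module with verifiably non-trivial asymmetry: Poincar\'e series over c.i.\ rings routinely exhibit numerator cancellations that conceal the underlying odd/even discrepancy, so the construction must be delicate enough to exploit the degree-$\ge 3$ hypothesis on the extra generators of $J$ in the homogeneous case, and the normal forms made available by algebraic closure in the small-codimension case.  A secondary subtlety is that $\qr R$ is defined via a \emph{height}, not via a count of generators, so one must distinguish minimal generators of $J$ of degree exactly $2$ (ruled out by the hypothesis on $\qr R$) from higher-degree generators whose initial forms happen to meet the quadratic part only through products.
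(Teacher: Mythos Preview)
Your contrapositive framing is fine, but the execution has a genuine gap: you never explain how to \emph{compute} the granularity of your proposed witness modules. Taking $M$ to be a syzygy of $R/\fa$ for a principal $\fa$ and invoking ``the change-of-rings spectral sequence between $Q$ and $R$'' is not a method that produces a closed-form Poincar\'e series; you assert the ``expected outcome'' without any mechanism for detecting a pole at $-1$. The same applies to the low-codimension case: ``write out the resolution by hand'' is not feasible over a local c.i.\ ring of codimension up to~$4$, and nothing in your outline pins down which cyclic module to use or why its even and odd Betti polynomials must differ. Your secondary observation, that in the homogeneous case the extra generators of $J$ have degree $\ge 3$, is correct but requires proof (it amounts to showing $\gr R$ is itself a graded complete intersection, which is part of what the paper establishes).

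The paper's route is entirely different and rests on having an \emph{explicit formula} for $P^R_S$ when $S$ is a suitable Golod residue ring of $R$ (Formula~\eqref{eq:relations3}, specialized in Theorem~\ref{thm:gring}). In the homogeneous case the single test module $S = R/\fm^2$ already suffices: Corollary~\ref{cor:quadrics} converts $\gran_R(R/\fm^2) = 0$ into $\codim R \le \rel\qr R + 1$, and homogeneity then forces $\rel\qr R = \codim\qr R$ because the defining ideal of $\gr R$ is generated by a regular sequence, so its quadratic part is a complete intersection of exactly that height. For $\codim R \le 4$, the cases $q \le 1$ are handled by Proposition~\ref{prop:loewy} with test modules $S$ satisfying $\fm^2 S = 0$; the delicate case $(c,q) = (4,2)$ requires the classification of height-$2$ homogeneous primes of multiplicity $\le 4$ recalled in~\ref{ch:delpezzo}, which supplies a $2\times 3$ matrix $U$ with $\qr I \subseteq I_2(U^*)$, and the test module is then $S = P/(I_2(U) + \fp^3)$. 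Your factorization through a minimal-multiplicity $Q$ via Theorem~\ref{thm:PG-I} plays no role in either argument.
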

  
The proofs of Theorems \ref{thm:thm:optimal-I} and \ref{thm:low-I} hinge upon identifying and constructing families 
of residue rings $S$ of $R$, where the import of invariants of the rings $R$ and $S$ on the values 
of $\gran_R(S)$ can be traced explicitly.  The relevant arguments involve hard computations that 
draw on a number of different techniques.  

The results in this work and in \cite{APW} open up a new narrative concerning the patterns of Betti 
sequences of modules over a given c.i.\ ring.  The methods of proof in these papers suggest possible 
approaches and specific questions.  Here is a sample.

  \begin{Question}
    \label{ques:esspol}
Let $R$ be a c.i.\ local ring $R$ and set $n := \codim R - \codim \qr R$.

Does $R$ have modules with non-polynomial Betti sequences if $n \ge 2$ ?  

Do $R$-modules of maximal granularity, equal to $n - 1$, always exist?
   \end{Question}

We thank Nicholas Packauskas and Mark Walker for numerous useful discussions.

  \section{Complexity and granularity}
     \label{S:Complexity and granularity}

We first overview notation, constructions, and results that will be used throughout the 
main text of the paper.  The statement that $(R,\fm,k)$ is a \emph{local ring} here means 
that $R$ is a commutative noetherian ring with unique maximal ideal $\fm$ and $k$ is the 
residue field $R/\fm$.  As usual, $\dim R$ denotes the (Krull) dimension of $R$ and $\edim R$ 
its \emph{embedding dimension} (that is, the minimal number of generators of~$\fm$); the 
(\emph{embedding}) \emph{codimension} of $R$ is the number $\codim R := \edim R-\dim R$, 
and the number $\codepth R := \edim R-\depth R$ is its (\emph{embedding}) \emph{codepth}.

When $(R',\fm',k')$ is a local ring, a ring homomorphism $\varphi\colon R \to R'$ is \emph{local} 
if $\varphi(\fm)$ lies in $\fm'$.  The map $\varphi$ is faithfully flat if and only if it is flat and local.  
Surjective homomorphisms are assumed to induce the identity map on the residue fields.

For our purposes, it is often convenient to introduce invariants through non-canonical 
presentations of modifications of $R$, or of its $\fm$-adic completion, $\wh R$.

 \begin{chunk}
  \label{ch:regular}
A \emph{regular presentation} of $R$ is a surjective ring map $R\tla P :\! \rho$ with $P$ local 
and regular; we use the same name for an isomorphism $R \cong P/I$, with $P$ as above.  

Every regular presentation $\rho$ factors through one that is \emph{minimal}, meaning 
$\edim P=\edim R$ or, equivalently, $I\subseteq\fp^2$, where $\fp$ is the maximal ideal of~$P$.
Indeed, one has $I/(I\cap\fp^2) \cong \Ker(\fp/\fp^2 \to \fm/\fm^2)$.  Lifting a $k$-basis of this kernel to a 
subset $\bst$ of $I$ yields a minimal presentation $R \cong \ov P/\ov I$, with $\ov P:= P/P\bst$; 
the ring $\ov P$ is regular because $\bst$ extends to a regular system of parameters of $P$. 

By Cohen's Structure Theorem, regular presentations $\wh R \cong P/I$ exist, and such 
\emph{Cohen presentations} also produce minimal ones.  Minimal Cohen presentation need 
not be isomorphic, but $\rank_k(I/\fp I)$ is the same for all of them (see \ref{ch:Betti} below); 
we let $\rel R$ denote that common value and call it the \emph{number of relations} of $R$.
  \end{chunk}

Throughout the paper, $M$ denotes a finite, that is, finitely generated $R$-module.
We review some numerical invariants of minimal free resolutions of modules over local
rings.  For general information on free resolutions we refer to \cite{Av:res}.

  \begin{chunk}
    \label{ch:Betti}
The $i$th \emph{Betti number} $\beta^R_i(M)$ of $M$ is the rank of the $i$th module in a(ny) 
minimal free $R$-resolution $F$ of $M$.  It can be computed in different ways:
  \[
\beta^R_i(M) = \rank_k(F\otimes_Rk) = \rank_k\Tor_i^R(M,k)=\rank_k\Ext^i_R(M,k) \,.
  \]

One measure of the growth of the \emph{Betti sequence} $(\beta^R_i(M))$ is given by the number
  \[
\cx_R(M) := \inf\{n \in \NN_0 \mid \beta^R_i(M) \le a i^{n-1} \text{ for $i\gg0$ and some } a > 0 \}\,,
  \]
called the \emph{complexity} of $M$ over $R$.  Thus $\cx_R(M) = 0$ means that $\pd_RM$ is finite 
and $\cx_R(M) = \infty$ that $i \mapsto \beta^R_i(M)$ cannot be bounded above by a polynomial. 

The Betti numbers of $M$ are handily packed into its \emph{Poincar\'e series}, given by
  \[
P^R_M := \sum_{i \ges 0} \beta^R_i(M) z^i \in \ZZ[[z]] \,.
  \]

If $\wh R \cong P/I$ is a minimal Cohen presentation, then the series $P^P_I$ lies in $\NN_0[z]$
and it is an invariant of $R$; in particular, so is the number $\rank_k(I/\fp I)$; see~\cite[4.1.3]{Av:res}.
   \end{chunk}

We study the asymptotic behavior of a Betti sequence in terms of its Poincar\'e series, complexity,
and granularity---a new invariant that we introduce next.

  \begin{chunk}
    \label{ch:gran}
The sequence $(\beta^R_i(M))$ is \emph{linearly recursive} if and only if the series $P^R_M$ is 
\emph{rational}; that is, if and only if $p \cdot P^R_M$ lies in $\ZZ[z]$ for some nonzero $p\in\ZZ[z]$.

If $P^R_M$ is rational and $\cx_R(M)$ is finite, then the poles of $P^R_M$ are at roots of unity, that 
of highest non-negative order is at $1$, and its order equals $\cx_R(M)$; see \cite[2.4]{Av:asymptotics}.

We say that $M$ has \emph{granularity} $g$ and write $\gran_R(M) = g$ if $P^R_M$ is rational 
and has a pole of order $g \ge 0$ at $-1$.  Formulas involving granularity are stated or used with 
the tacit assumption that the relevant modules have rational Poincar\'e~series.  
   \end{chunk}

The definitions of complexity and granularity given in \ref{ch:gran} and those used in the 
introduction will soon be reconciled; see \ref{ch:rational}.  

 \begin{chunk}
    \label{ch:basics}
The properties of Poincar\'e series and of complexity, listed below, hold without restrictions; 
the formulas for granularity follow from those for Poincar\'e series. 
   \begin{enumerate}[\rm(1)]
   \item
If $N$ is an $n$th syzygy module of $M$ over $R$, then one has 
  \[
P^R_M - z^n P^R_N \in \ZZ[z]\,,\quad \cx_R(M) = \cx_R(N)\,, \quad\text{and}\quad \gran_R(M) =  \gran_R(N) \,.
  \]
   \item
If $R\to (R',\fm',k')$ is a local ring homomorphism, $M'$ denotes the $R'$-module $R'\otimes_RM$, and 
$\Tor^R_i(R',M) = 0$ holds for $i \ge 1$, then one has
  \[
P^R_M = P^{R'}_{M'} \,,\ \  \ \cx_{R}(M) = \cx_{R'}(M')\,, \ \text{ and }\ \, \gran_{R}(M) =  \gran_{R'}(M') \,.
  \]
This is the case, in particular, if $R'$ is flat over $R$, or if $R' = R/R\bsg$ for some $R$-regular set 
$\bsg$ that is also $M$-regular.
   \item
A (\emph{codimension $n$}) \emph{deformation} of $R$ to $Q$ is an isomorphism $R \cong Q/Q\bsf$, with 
$(Q,\fq,k)$ local and $\bsf$ a $Q$-regular set (of $n$ elements); it is \emph{embedded} if $\bsf\subseteq\fq^2$.
We use the same name(s) also for the canonical homomorphism $R \tla Q$.
    \end{enumerate}
    \end{chunk}

Betti sequences whose asymptotic patterns are (almost) completely determined by  
complexity and granularity admit several descriptions:

  \begin{chunk}
    \label{ch:rational}
Let $R$ be a local ring and $M$ a nonzero $R$-module.

The following conditions on an integer $c \ge 0$ are equivalent.
  \begin{enumerate}[\quad\rm(i)]
  \item
There is an inclusion $(1-z^2)^c \cdot P^R_M \in \ZZ[z]$.
  \item
There exists a unique $p^R_M  \in \ZZ[z]$ with $ p^R_M(1) > 0$ such that 
   \[
\qquad P^R_M = \frac{p^R_M}{(1 + z)^{\gran_R(M)}(1 - z)^{\cx_R(M)}} \quad\text{and}\quad 0 \le \gran_R(M) <  \cx_R(M) \le c \,.
   \]
  \item
There exist unique polynomials $\beta^{R,M}_{j} \in \QQ[x]$ for $ j = 0,1$ that 
satisfying the following conditions, where the convention $\deg(0) = -1$ is used:
   \[
      \begin{aligned}
\beta^R_i(M) &= \beta^{R,M}_j(i) \quad\text{for}\quad  i \gg 0 \quad\text{when}\quad i \equiv j\pmod 2 \,,  \quad \text{and}
  \\
\qquad\gran_R(M) &= \deg\big(\beta^{R,M}_{0} - \beta^{R,M}_{1}\big) + 1 <  \deg\big(\beta^{R,M}_{j}\big) +1 = \cx_R(M) \le c\,.  
       \end{aligned}
  \]
  \end{enumerate}

Indeed, it is shown in \cite[Proof of Theorem 4.1]{Av:vpd} that (i) implies (ii).  Partial fraction decomposition 
yields the implications (ii)$\implies$(iii)$\implies$(i).
   \end{chunk}

\begin{chunk}
  \label{ch:ci}
A collection of known results illustrates the conditions in \ref{ch:rational}.
   \begin{enumerate}[\rm(1)]
   \item
If $\pd_Q(\wh M)$ is finite for some codimension $c$ deformation $\wh R \cong Q/Q\bsf$, then 
\ref{ch:rational}(i) holds; see \cite[4.2(i)]{Gu1}; this is a major source of modules whose Poincar\'e 
series have poles only at $\pm1$.  However, $R$-modules $M$ with $P^R_M = 2/(1-z)$ may exist 
over rings $R$ that admit no non-trivial deformations; see \cite{AGP1}.
   \item
The ring $R$ is said to be \emph{complete intersection}, or \emph{c.i.}, if $\wh R$ admits a 
deformation to some regular local ring.  When $\wh R \cong P/I$ is a minimal Cohen presentation, 
$R$ is c.i.\ if and only if $I$ can be minimally generated by some $P$-regular set, if and only if 
$\rel R = \codim R$ (i.e., $I$ can be generated by $\codim R$ elements, see \ref{ch:regular}).
   \item
The following conditions are equivalent: (i) $R$ is c.i.;  (ii) $(1-z^2)^c \cdot P^R_M \in \ZZ[z]$ 
for every $R$-module $M$; (iii) $P^R_k = (1+t)^{\dim R}/(1-t)^{\codim R}$; (iv) $\cx_R(k) < \infty$.
   \end{enumerate}
See (1) for (i)$\implies$(ii), \cite[Theorem~6]{Ta} for (i)$\implies$(iii), and \cite[2.3]{Gu2} for (iv)$\implies$(i).
  \end{chunk}

\begin{chunk}
  \label{ch:adjust}
Maps of local rings $(R,\fm,k)\to (R',\fm',k')$ that are flat with $\fm R' = \fm'$ will be called  
\emph{adjustments} of~$R$; for every $R$-module $M$ and $M' := R' \otimes_R M$ one has 
  \[
H_{M'} = H_{M}\,, \quad \dim_{R'} M' = \dim_{R} M \,, \quad\text{and}\quad \, \edim R' = \edim R'\,.
  \]
 
Any adjustment $R \to (R',\fm',k')$, composed with the completion map $R'\to \wh{R'}$ and some 
minimal Cohen presentation $\wh{R'} \cong P/I$ yields an adjustment $R \to P/I$ with $(P,\fp,k')$ 
regular and $\edim P = \edim R$.  In adjustments $R \to P/I$ with $P$ regular, the presentation 
$P/I \tla P$ is minimal if and only if $\edim P = \edim R$.

Grothendieck \cite[10.3.1]{Gr} proved that every field extension $k \subseteq l$ occurs as the 
extension of residue fields induced by adjustments of $R$, called \emph{inflations}.
  \end{chunk}

\section{Associated quadratic rings}
\label{S:Associated quadratic rings}

Recall that $(R,\fm,k)$ denotes a local ring and $M$ a finite $R$-module. 

In this section we introduce and study invariants of $R$ that are defined in terms of its associated graded ring 
and appear in the main results of the paper.  We first record terminology and notation used when dealing with 
associated graded objects; for general background on graded rings and their modules; see \ref{ch:gradedBetti}.

\begin{chunk}
  \label{ch:HS}
Set $\gr M_j=\fm^jM/\fm^{j+1}M$ for $j\in\ZZ$, and $\gr M=\bigoplus_{j\in\ZZ}\gr M_j$.  Thus, $\gr R$ is 
the \emph{associated graded ring} of $R$ and $\gr M$ the \emph{associated graded $\gr R$-module} of $M$.  
For $x\in M\smallsetminus \{0\}$ set $v(x)=\max\{j\mid x\in\fm^j\}$.  The image of $x$ in $\gr M_{v(x)}$ is called
the \emph{initial form} of $x$ and is denoted by $x^*$; in addition, we set $0^*=0$.

We write $H_{M}$ for $\sum_{j\ges 0}\rank_k{\gri Mj}(z)$.  Since $\gr R$ is generated by $\gri R1$ 
over $\gri R0 =k$, the Hilbert-Serre Theorem yields $h^R_M \in \ZZ[z]$, with $h^R_{M}(1)\ne0$, such that
  \[
H_{M} = h^R_M \cdot (1-z)^{-\dim R} \,.
  \]

The integer $h^R_M(1)$, called the \emph{multiplicity} of~$M$ over~$R$, is denoted by $e_R(M)$;
one has $e_R(M)\ge0$, with equality if and only if $\dim M<\dim R$; set $e(R)=e_R(R)$.
    \end{chunk}

\begin{chunk}
  \label{ch:sym}
Let $\wh R \tla P {\ :\,} \rho$ be a minimal Cohen presentation; see \ref{ch:regular}.  It induces $k$-linear 
isomorphisms $\fp/\fp^2\cong\wh\fp/\wh\fp^2\cong\wh \fm/\wh \fm^2\cong\fm/\fm^2$ that we use to identify 
these vector spaces, and hence their symmetric $k$-algebras.  Thus we view $\gr P$ (cf.\ \ref{ch:HS}) as 
the symmetric $k$-algebra of $\fm/\fm^2$ and we have a canonical surjection $\gr R \tla \gr P {\ :\,} \gr\rho$.  
Set $I^* := \Ker(\gr\rho)$ and call the isomorphism $\gr R \cong \gr P/I^*$ the \emph{canonical presentation 
of $\gr R$}; if a minimal regular presentation $\rho$ (see \ref{ch:adjust}) is at hand, then $I^*$ is equal to 
the ideal of $\gr P$ generated by the set of leading forms $\{f^*\}_{f \in I}$.  

As $P$ is regular and $\dim P=\edim R$, the following relations hold:
  \begin{equation}
    \label{eq:sym1}
\height I^* =\codim \gr R=\codim R = \height I \le \rel R \le \rel \gr R \,.
  \end{equation}
    \end{chunk}

\begin{chunk}
    \label{ch:quad}
We define the \emph{associated quadratic ring} of $R$ to be the graded $k$-algebra
  \begin{equation}
    \label{eq:quad1}
\qr R := {\gr P}/{\qr I} \,,
  \quad\text{where}\quad
\qr I := \gr PI^*_2\,.
  \end{equation}
It is an invariant of $R$, as is the commutative diagram with exact rows
  \begin{equation}
    \label{eq:quad2}
  \begin{aligned}
\xymatrixrowsep{2pc}
\xymatrixcolsep{2pc}
\xymatrix{
0 
\ar@{->}[r]
& \qr I 
\ar@{->}[r]
\ar@{>->}[d]
&\gr P
\ar@{->}[r]
\ar@{=}[d]
&\qr R
\ar@{->}[r]
\ar@{->>}[d]
&0
  \\
0 
\ar@{->}[r]
& I^* 
\ar@{->}[r]
&\gr P
\ar@{->}[r]^-{\gr\rho}
&\gr R
\ar@{->}[r]
&0
}
  \end{aligned}
  \end{equation}
By definition, the ideal $\qr I$ is minimally generated by $\rel \qr R$ quadrics.

If $R \cong P/I$ is a minimal regular presentation, it yields surjective homomorphisms
  \begin{equation}
    \label{eq:quad3}
I\tra I/\fp I\tra I/(\fp^3\cap I)\cong(I+\fp^3)/\fp^3= I^*_2=\qri I2 \,.
  \end{equation}
Letting $\ov f$ denote the class of $f\in I$ in $I/\fp I$ and $\qr f$ its class in $(I+\fp^3)/\fp^3 = \qri I2$, one gets 
a $k$-linear surjection $\ov f \mapsto \qr f$ with $\qr f = f^*$ for $f\notin\fp^3$ and $\qr f = 0$ otherwise.  
  \end{chunk}

For ease of reference, we spell out a few formal properties of that construction.

\begin{Lemma}
  \label{lem:grR}
The ring $\qr R$ and the ideal $\qr I$ from \eqref{eq:quad1} satisfy the relations below.
 \begin{align}
    \label{eq:grR1}
\codim\qr R & = \edim R-\dim\qr R=\height \qr I \le \rel \qr R  \le \rel R \,.
  \\
    \label{eq:grR2}
\codim \qr R & - \codim R = \dim R - \dim\qr R = \height \qr I-\height I^*\le0 \,.
\intertext{\indent If \emph{\ref{ch:sym}} and $R \to (R',\fm',k')$ is an adjustment 
(see \emph{\ref{ch:adjust}}), then}
    \label{eq:grR3}
\codim\qr{R} & = \codim\qr{R'} 
\text{ and }
\rel\qr{R} = \rel\qr{R'}\,.
\intertext{\indent If $R \tla (Q,\fq,k)$ is a surjective ring homomorphism with kernel in $\fq^2$, then}
    \label{eq:grR4}
\codim\qr{R} &\ge \codim\qr{Q}
\quad\text{and}\quad
\rel \qr R \ge \rel \qr Q \,.
  \end{align} 
 \end{Lemma}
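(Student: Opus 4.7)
The plan is to leverage the canonical presentation $\gr R \cong \gr P/I^*$ from \ref{ch:sym} together with its quadratic truncation $\qr R = \gr P/\qr I$ from \ref{ch:quad}. Throughout I will write $S := \gr P$, which is a polynomial ring over $k$ in $\edim R$ variables since $\edim P = \edim R$ in a minimal Cohen presentation. Parts (1) and (2) will then amount to bookkeeping against this setup; parts (3) and (4) record functoriality properties of the construction.

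For (1), I would use the Cohen--Macaulayness of $S$ to write $\dim \qr R = \edim R - \height \qr I$, which gives $\codim \qr R = \edim R - \dim \qr R = \height \qr I$. Krull's generalized principal ideal theorem supplies $\height \qr I \le \rel \qr R$, and since $\qr I$ is generated in degree $2$ by definition, $\rel \qr R = \rank_k \qri I 2$; the $k$-linear surjection \eqref{eq:quad3} then yields $\rel \qr R \le \rank_k(I/\fp I) = \rel R$. Part (2) is immediate from (1): the first equality comes from subtracting $\codim R = \edim R - \dim R$, the second from $\height I^* = \codim R$ in \eqref{eq:sym1}, and the inequality $\height \qr I \le \height I^*$ from monotonicity of height applied to the inclusion $\qr I \subseteq I^*$ visible on the left column of diagram \eqref{eq:quad2}.

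For (3), I would observe that an adjustment $R \to R'$ is faithfully flat with $\fm R' = \fm'$, so commuting powers of the maximal ideal with the flat base change yields $\gr R' \cong \gr R \otimes_k k'$; the same base change applies to $S$ and to the ideal $I^*$, whence $\qr{R'} \cong \qr R \otimes_k k'$ and the defining quadratic ideals correspond under this isomorphism. Heights and Hilbert functions of graded ideals in a standard graded polynomial ring are preserved under ground-field extension, so both $\codim \qr R$ and $\rel \qr R$ are invariant under adjustment.

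For (4), the hypothesis $\Ker(Q \tra R) \subseteq \fq^2$ yields $\edim R = \edim Q$ through the isomorphism $\fq/\fq^2 \xrightarrow{\sim} \fm/\fm^2$. Starting from a minimal Cohen presentation $\wh Q \cong P/I_Q$ and lifting the ideal $\Ker(\wh Q \tra \wh R)$ to some $\widetilde J \subseteq \fp^2$, I obtain a \emph{minimal} Cohen presentation $\wh R \cong P/I_R$ with $I_R = I_Q + \widetilde J \supseteq I_Q$. Passing to initial forms then yields $I^*_Q \subseteq I^*_R$ inside the common ambient ring $S$, and hence $\qr{I_Q} \subseteq \qr{I_R}$ on the quadratic truncations. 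Taking heights delivers $\codim \qr Q \le \codim \qr R$, and comparing $k$-dimensions in degree $2$ delivers $\rel \qr Q \le \rel \qr R$. The main obstacle I anticipate is precisely this setup step in (4): arranging the two Cohen presentations to sit inside a common $P$, rather than merely noting an abstract surjection $\gr Q \tra \gr R$. Once $J \subseteq \fq^2$ is used to identify the degree-$1$ parts, the rest is routine monotonicity and base change.
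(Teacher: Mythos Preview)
Your proposal is correct and follows essentially the same route as the paper: the polynomial-ring bookkeeping plus Krull's Principal Ideal Theorem and the surjection \eqref{eq:quad3} for \eqref{eq:grR1}--\eqref{eq:grR2}, the base-change isomorphism $\gr{R'} \cong \gr R \otimes_k k'$ for \eqref{eq:grR3}, and for \eqref{eq:grR4} the reduction to a common minimal Cohen presentation $P$ so that the containment $I_Q \subseteq I_R$ forces $\qr{I_Q} \subseteq \qr{I_R}$. The only cosmetic difference is that the paper invokes \eqref{eq:grR3} explicitly to assume $R$ complete before running your step~(4), whereas you pass directly to $\wh Q \tra \wh R$; these are equivalent.
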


   \begin{proof}
In \eqref{eq:grR1} the equalities hold because $\gr P$ is a polynomial ring; in \eqref{eq:grR2} they follow
from \eqref{eq:grR1} and \eqref{eq:sym1}.  The Principal Ideal Theorem, the surjection \eqref{eq:quad3},
and $\qr I \subseteq I^*$ yield the inequalities in \eqref{eq:grR1} and \eqref{eq:grR2}.  

Let $\gr{P'}$ denote the symmetric $k'$-algebra of $\fm'/\fm'^2$; since $R\to R'$ induces isomorphisms 
$\gr R\otimes_kk' \cong \gr{R'}$ and $\gr P\otimes_kk' \cong \gr{P'}$ of graded $k'$-algebras, $(?\otimes_kk')$ 
turns \eqref{eq:quad2} into the corresponding diagram for $R'$, and \eqref{eq:grR3} follows.  

In particular,
for \eqref{eq:grR4} we may assume that $R$ is complete.  A minimal Cohen presentation $\wh Q \cong P/J$ 
then yields such a presentation $R\cong P/I$.  As $\gr P \tra \gr R$ factors through $\gr P \tra \gr Q$, we get 
$I^* \supseteq J^*$, whence $\qr I \supseteq \qr J$, and \eqref{eq:grR4} follows.
   \end{proof}

The largest value of $\codim \qr R$ allowed by \eqref{eq:grR1} is $\edim R$: it is reached if and only 
if $\qr I=0$; that is, if and only if $I$ lies in $\fp^3$.  We have no similar description of the rings with 
$\codim \qr R = \codim R$, the largest value allowed by \eqref{eq:grR2}, except if $R$ is \emph{c.i.}; 
see Proposition \ref{prop:minmult}; we record a few facts used in its proof, and later on.

\begin{chunk}
  \label{ch:VV}
Let $(P,\fp,k)$ be a local ring and $Q := P/(g_1,\dots,g_s)$ with $g_i\in\fp^{n_i}$ for $1\le i\le s$.
\begin{enumerate}[\rm(1)]
  \item
The set $\{g^*_1,\dots,g^*_s\}$ is $\gr P$-regular if and only if it generates $\Ker(\gr P\to\gr Q)$
and $\{g_1,\dots,g_s\}$ is $P$-regular; see Valabrega and Valla \cite[2.7 and 1.1]{VV}.
  \item
When $\{g_1,\dots,g_s\}$ is part of a system of parameters, there is an inequality
  \[
e(Q)\ge n_1\cdots n_s\cdot e(P) \,;
  \]
equality holds if $\{g^*_1,\dots,g^*_s\}$ is $\gr P$-regular; see \cite[VIII, \S7, Proposition 4]{Bo}.
  \item
Assume $\gr P$ is Cohen-Macaulay.  The set $\{g^*_1,\dots,g^*_s\}$ is $\gr P$-regular if and only 
if $\{g_1,\dots,g_s\}$ is $P$-regular and equality holds in (2) above; such an equivalence is 
proved by Rossi and Valla \cite[1.8]{RV} under the additional hypothesis that $\{g_1,\dots,g_s\}$ 
is regular, which is superfluous in one direction, due to (1) above.
  \item
When $Q$ is c.i.\ and $\wh Q \tla P$ is a minimal Cohen presentation, (2) above yields 
 \[
e(Q) = e(\wh Q) \ge v(g_1)\cdots v(g_s)\cdot e(P) \ge  2^{\codim \wh Q} = 2^{\codim Q}\,.
  \]
When $e(Q) = 2^{\codim Q}$ holds the ring $Q$ is said to be \emph{c.i.\ of minimal multiplicity}.
  \item
The ring $Q$ is c.i.\ (of minimal multiplicity), if some, and only if all of its adjustments have the 
corresponding property.  Indeed, the invariants used to define these notions do not change when  
$Q$ is replaced by an adjustment; see~\ref{ch:HS}.
 \end{enumerate}
   \end{chunk}

Next we collect various characterizations of local c.i.\ rings of minimal multiplicity.  
They are used in upcoming proofs to produce and/or to recognize such rings.
For notions and notation concerning graded rings we refer to Section \ref{S:A family of Golod homomorphisms}.

\begin{Proposition}
  \label{prop:minmult}
The following conditions on a local ring $Q$ are equivalent.
  \begin{enumerate}[\quad\rm(i)]
 \item
$Q$ is c.i.\ of minimal multiplicity (see \emph{\ref{ch:VV}(5)}). 
 \item
$Q$ is c.i.\ and $\codim Q=\codim \qr Q$. 
 \item
$Q$ is c.i.\ and $\gr Q\cong\qr Q$ as graded $k$-algebras.
 \item
$Q$ is c.i.\ and the graded $k$-algebra $\gr Q$ is Koszul (see \emph{\ref{ch:gradedBetti}(2)}).
 \item
$\gr Q$ is a graded complete intersection of quadrics (see \emph{\ref{ch:gradedBetti}(3)}).
 \item
If $\wh Q \cong P/J$ is a minimal Cohen presentation and $\{g_1,\dots,g_s\}$ minimally generates 
$J$, then $\{\qri g1, \dots, \qri gs\}$ minimally generates $J^*$ and is $\gr P$-regular.
   \end{enumerate}
  \end{Proposition}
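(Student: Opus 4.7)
The plan is to work inside a minimal Cohen presentation $\wh Q \cong P/J$ with $J$ minimally generated by $g_1, \dots, g_s$ (all six conditions are stable under completion and adjustments by Lemma \ref{lem:grR} and \ref{ch:VV}(5)), and close the cycle
\[
\text{(i)} \Rightarrow \text{(vi)} \Rightarrow \text{(v)} \Rightarrow \text{(iv)} \Rightarrow \text{(iii)} \Rightarrow \text{(ii)} \Rightarrow \text{(i)}.
\]
The principal tools are the Valabrega--Valla criterion \ref{ch:VV}(1), the Rossi--Valla multiplicity criterion \ref{ch:VV}(3), and the Cohen--Macaulayness of $\gr P$.

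For (i) $\Rightarrow$ (vi), set $s := \codim Q$ and note that the chain $e(Q) \ge v(g_1) \cdots v(g_s) \ge 2^s$ from \ref{ch:VV}(4) collapses to equalities under (i), forcing $v(g_i) = 2$ for every $i$ (so $\qri{g}{i} = g_i^*$) and triggering \ref{ch:VV}(3) to conclude that $\{g_1^*, \dots, g_s^*\}$ is $\gr P$-regular; \ref{ch:VV}(1) then yields $J^* = (g_1^*, \dots, g_s^*)$, establishing (vi). The step (vi) $\Rightarrow$ (v) just unwinds definitions. For (v) $\Rightarrow$ (iv) I would invoke Fr\"oberg's theorem that a graded complete intersection of quadrics over a field is Koszul, together with the transfer of the c.i.\ property from $\gr Q$ to $Q$ via \ref{ch:VV}(1): lifting the quadratic generators of $J^*$ to elements of $J$ and applying a Krull intersection argument in the complete ring $P$ shows these lifts generate $J$ and form a $P$-regular sequence. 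For (iv) $\Rightarrow$ (iii) I would use the standard characterization of Koszul graded quotients of polynomial rings: the defining ideal is generated in degree two, forcing $I^* = \qr I$ and hence $\gr Q \cong \qr Q$. The step (iii) $\Rightarrow$ (ii) is immediate from $\codim \gr Q = \codim Q$ via \eqref{eq:sym1}.

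The closing implication (ii) $\Rightarrow$ (i) is the main obstacle. Under (ii), Lemma \ref{lem:grR} produces the chain $s = \codim \qr Q \le \rel \qr Q \le \rel Q = s$, so $\qr I$ is minimally generated by $s$ quadrics of height $s$ in the Cohen--Macaulay ring $\gr P$; by the unmixedness theorem these form a regular sequence, making $\qr Q$ a graded complete intersection of $s$ quadrics with $\dim \qr Q = \edim Q - s = \dim Q$ and Hilbert series $(1+z)^s/(1-z)^{\dim Q}$. The canonical surjection $\qr Q \twoheadrightarrow \gr Q$ yields $H_{\gr Q}(z) \le H_{\qr Q}(z)$ coefficientwise; comparing the leading asymptotics of the two coefficient sequences in the common denominator $(1-z)^{\dim Q}$ extracts $e(Q) \le 2^s$, and the reverse inequality from \ref{ch:VV}(4) then delivers $e(Q) = 2^s$, i.e.\ (i). The delicate asymptotic comparison of Hilbert series at the end, which bypasses the failure of pointwise coefficient comparison of the two numerator polynomials, is the central technical hurdle of the proof.
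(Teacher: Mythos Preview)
Your argument is correct and largely parallels the paper's, but the routing differs in one place worth noting. The paper runs two overlapping chains, (i)$\Rightarrow$(vi)$\Rightarrow$(v)$\Rightarrow$(iv)$\Rightarrow$(i) and (vi)$\Rightarrow$(iii)$\Rightarrow$(ii)$\Rightarrow$(i); in particular it proves (iv)$\Rightarrow$(i) \emph{directly} by invoking \c{S}ega's theorem \cite[2.3]{S1}, which identifies $\sum_j\beta^{\gr Q}_{i,j}(k)$ with $\beta^Q_i(k)$ for Koszul $\gr Q$ and thereby reads off $H_Q$ from $P^Q_k$. Your single cycle replaces this with (iv)$\Rightarrow$(iii) via the standard fact that Koszul algebras are quadratic (forcing $J^*=\qr J$), and then descends through (ii) to (i); this sidesteps the appeal to \c{S}ega and is a modest simplification. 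For (v)$\Rightarrow$(iv) you cite Fr\"oberg where the paper reads Koszulness directly off the Poincar\'e-series formula in \ref{ch:gradedBetti}(3); these are equivalent. Your (ii)$\Rightarrow$(i) matches the paper's exactly: the inequality $e(\gr Q)\le e(\qr Q)=2^s$ that you extract from Hilbert-series asymptotics is precisely what the paper obtains from the surjection $\qr Q\twoheadrightarrow\gr Q$ of equidimensional graded rings, and is routine rather than the ``central technical hurdle'' you flag. (Minor notational slip: in your (iv)$\Rightarrow$(iii) step the ideal in play is $J^*$ and $\qr J$, not $I^*$ and $\qr I$.)
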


  \begin{proof}
We set $d := \edim Q$ and assume, as we may (see \ref{ch:VV}(4)) that $Q$ is complete.  

(i)$\implies$(vi).
With $n_i : = v(g_i)$ for $1\le i\le s$ in \ref{ch:VV}(4), we get $n_i=2$, and hence $g_i^* = \qri gi$.  
Thus $\{\qri g1, \dots, \qri gs\}$ is $\gr P$-regular by \ref{ch:VV}(3) and generates $J^*$ by \ref{ch:VV}(1).

(vi)$\implies$(v).
This implication follows from the hypothesis, as $\gr Q = \gr P/J^*$.

(v)$\implies$(iv).
The ring $Q$ is c.i., by \ref{ch:VV}(1).   From $P^{\gr Q}_k = (1+yz)^e/(1 - y^2z^2)^s$ (see \ref{ch:gradedBetti}(3)),
we get $\beta^{\gr Q}_{i,j}(k) = 0$ for $j \ne i$; therefore $\gr Q$ is a Koszul algebra.

(iv)$\implies$(i).
As $\gr Q$ is Koszul, $\sum_j \beta^{\gr Q}_{i,j}(k) = \beta^Q_i(k)$ holds for every integer $i$; 
see \c Sega \cite[2.3]{S1}.  With $s := \codim Q$, this result yields the third equality in the string
  \[
\frac{1}{H_{\gr Q}(-z)}
= \frac{H_k(-z)}{(-z)^{0}H_{\gr Q}(-z)}
= P^{\gr Q}_k(1,z)
= P^{Q}_k(z)
= \frac{(1+z)^{\dim Q}}{(1-z)^{s}} \,.
  \]
The second one comes from \ref{ch:linear}(1), and the fourth from \ref{ch:ci}(3).  
Thus we get equalities $H_{Q}(y) = H_{\gr Q}(y) = (1+y)^{s}/(1-y)^{\dim Q}$, whence $e(Q) = 2^{\codim Q}$.

(vi)$\implies$(iii).
This implication is given by \ref{ch:VV}(1).

(iii)$\implies$(ii).
This implication holds because $\codim Q = \codim \gr Q$.

(ii)$\implies$(i).
The hypothesis and Formulas \eqref{eq:sym1} and \eqref{eq:grR1} yield (in)equalities
  \[
\codim \gr Q = \codim Q = \codim \qr Q \le \rel \qr Q \le \rel Q = \codim Q 
  \]
that force equalities throughout.  In particular, $\qr Q$ is a graded complete intersection of 
$s := \codim Q$ quadrics; from the surjection $\qr Q \tra \gr Q$ and \ref{ch:VV}(5), we obtain 
  \[
2^{\codim Q} = 2^s = e(\qr Q) \ge e(\gr Q) = e(Q) \ge 2^{\codim Q} \,.
  \qedhere
  \]
   \end{proof}

We are ready for the main results in this section, which concern general local rings.  In the special 
case when $k = k'$ and $R' =R'' = \wh R$, the first theorem below yields a structure theorem for 
local rings with prescribed quadratic codimension, stated in the introduction as Theorem \ref{thm:PG-I}.  
The second theorem provides, under manageable additional hypotheses, families of local rings with 
prescribed quadratic codimension parametrized by dense subsets of affine spaces.
  
\begin{Theorem}
  \label{thm:PG}
Let $(R,\fm,k)$ be a local ring, and set $r := \rel R$ and $q := \codim \qr R$.
  \begin{enumerate}[\rm(1)]
  \item
For each field extension $k \hra k'$ with $k'$ infinite there exists an adjustment $R\to P/I$ 
with $(P,\fp,k')$ regular, $I = (f_1,\dots,f_r)$, and $Q := P/(f_1,\dots,f_q)$ c.i.\ of minimal multiplicity; 
every such adjustment satisfies $\edim P = \edim R$.
   \item
If $R \to R'$ is an adjustment, $R' \tla R'' \tla Q$ are surjective ring maps, and $Q$ is local 
c.i.\ of minimal multiplicity with $\codim Q = q$ and $\edim Q = \edim R$, then
 \[
 \codim \qr {R} = \codim \qr {R''} = \codim \qr Q  = \codim Q \,.
  \]
  \end{enumerate}
   \end{Theorem}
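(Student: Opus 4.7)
The plan is to establish Part~(1) by an explicit construction inside a minimal Cohen presentation, using the infinite residue field $k'$ to pick out a generic regular sequence of quadrics, and to deduce Part~(2) from \eqref{eq:grR3}, \eqref{eq:grR4} and Proposition~\ref{prop:minmult}.

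For the existence in Part~(1), I would first pass via Grothendieck's inflation (see~\ref{ch:adjust}) to an adjustment $R\to R_1$ with residue field $k'$; by \eqref{eq:grR3} one still has $\rel R_1=r$ and $\codim\qr{R_1}=q$. Choose a minimal Cohen presentation $\wh{R_1}\cong P/I$, so $(P,\fp,k')$ is regular with $\edim P=\edim R$, and $I\subseteq\fp^2$ is minimally generated by $r$ elements. The crucial input is that $\qri I 2$ generates the height-$q$ ideal $\qr I$ in the polynomial ring $\gr P$; in particular $\dim_{k'}\qri I 2\ge q$. Since $k'$ is infinite and $\gr P$ is a Cohen--Macaulay domain, I would build $g_1,\dots,g_q\in\qri I 2$ one at a time by homogeneous prime avoidance: once $g_1,\dots,g_i$ form a $\gr P$-regular sequence, every associated prime of $(g_1,\dots,g_i)$ has height $i<q$, so none contains $\qr I$; hence $\qri I 2$ is not contained in the degree-two part of any such prime, and because $k'$ is infinite it is not contained in their finite union, producing a suitable $g_{i+1}\in\qri I 2$.

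Next I lift each $g_j$ to $\tilde g_j\in I$ with $v(\tilde g_j)=2$, so $\tilde g_j^*=g_j$. By \ref{ch:VV}(1) the sequence $\tilde g_1,\dots,\tilde g_q$ is $P$-regular, and by \ref{ch:VV}(2) one has $e(Q)=2^q\cdot e(P)=2^q$ for $Q:=P/(\tilde g_1,\dots,\tilde g_q)$, so $Q$ is c.i.\ of minimal multiplicity. Because $\fp I\subseteq\fp^3$ while the $g_j$ are linearly independent in $(\gr P)_2$, the $\tilde g_j$ are linearly independent modulo $\fp I$, so they extend to a minimal generating set $f_1:=\tilde g_1,\dots,f_q:=\tilde g_q,f_{q+1},\dots,f_r$ of $I$, completing the construction. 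For the final clause, given any adjustment $R\to P/I=R'$ meeting the hypotheses, factor $P\tra R'$ through a minimal regular presentation as in \ref{ch:regular}: this uses $d:=\edim P-\edim R'$ elements of $I$ extending to a regular system of parameters of $P$, and a standard count then gives $\rank_{k'}(I/\fp I)=d+\rel R'$. The equalities $\rel R'=\rel R=r$ and $\rank_{k'}(I/\fp I)\le r$ force $d=0$, i.e., $\edim P=\edim R'=\edim R$.

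For Part~(2), $\codim\qr R=\codim\qr{R'}$ by \eqref{eq:grR3}. Since local surjections cannot raise embedding dimension and $\edim Q=\edim R=\edim R'$, the chain $\edim R'\le\edim R''\le\edim Q$ forces equality throughout, which in turn forces the kernels of $R'\tla R''$ and $R''\tla Q$ to lie in the squares of the respective maximal ideals. Hence \eqref{eq:grR4} yields $\codim\qr{R'}\ge\codim\qr{R''}\ge\codim\qr Q$. Proposition~\ref{prop:minmult} gives $\codim\qr Q=\codim Q=q$, and $\codim\qr R=q$ by the definition of $q$, so the chain collapses to equalities.

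The main technical hurdle lies in Part~(1), in the iterative construction of the quadratic regular sequence $g_1,\dots,g_q$ inside $\qri I 2$: the infinitude of $k'$ is essential so that $\qri I 2$ cannot be covered by a finite union of proper $k'$-subspaces (the degree-two parts of the relevant primes), and the Cohen--Macaulayness of $\gr P$ is what converts the height condition $\height\qr I=q$ into the fact that every associated prime of a partial regular sequence of length $i<q$ fails to contain $\qr I$.
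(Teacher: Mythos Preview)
Your proof is correct and follows essentially the same route as the paper's: construct the regular sequence of quadrics in $\qri I2$ via prime avoidance over the infinite field, lift to $I$, extend to a minimal generating set, and for Part~(2) sandwich $\codim\qr{R''}$ between $q$ and $q$ using \eqref{eq:grR3}, \eqref{eq:grR4}, and Proposition~\ref{prop:minmult}. The only differences are cosmetic---you invoke \ref{ch:VV}(1)--(2) directly to certify minimal multiplicity where the paper cites (vi)$\Rightarrow$(i) of Proposition~\ref{prop:minmult}, and you spell out the ``every such adjustment'' clause and the kernel-in-square verification for \eqref{eq:grR4}, both of which the paper leaves implicit.
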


  \begin{proof}
(1) Referring to \ref{ch:adjust}, choose an adjustment $R\to R' = P/I$ with $k \hra k'$ the induced 
residue field extension and $P \tra P/I$ a minimal regular presentation.  Due to the equalities
$\height \qr I = \codim \qr{R'} = \codim \qr R = q$ and $\rel \qr{R'} = \rel R = r$ (see \eqref{eq:grR1} and 
\eqref{eq:grR3}), the ideal $\qr I$ is minimally generated by $r$ elements and contains $\gr P$-regular 
sets of $q$ forms.  As $\qr I$ is generated by quadrics and $k'$ is infinite, $\qri I2$ contains a 
$\gr P$-regular set of $q$ elements.  In view of \eqref{eq:quad3}, it can be chosen in the form 
$\{\qri f1,\dots, \qri fq\}$ with $f_i \in I$; by (vi)$\implies$(i) in Proposition \ref{prop:minmult}, the
ring  $P/(f_1,\dots,f_q)$ is c.i.\ of minimal multiplicity.  Since $\{\qri f1,\dots, \qri fq\}$ is $k$-independent,
$\{f_1,\dots,f_q\}$ can be extended to a minimal set of generators of $I$. 

(2)  The hypothesis provides the equalities that bookend the following string:
 \[
q = \codim \qr R  = \codim \qr {R'} \ge \codim \qr {R''} \ge \codim \qr Q = \codim Q = q  \,.
  \]
For the rest, use \eqref{eq:grR3}, \eqref{eq:grR4}, and (i)$\implies$(ii) in Proposition \ref{prop:minmult}. 
   \end{proof}

  \begin{Theorem}
 \label{thm:family}
Let $(P,\fp,k)$ be a local ring; let $\ov a$ denote the image in $k$ of $a \in P$.  
Given $(f_1,\dots,f_r)\in(\fp^2)^r$ and
$\bsa := (a_1,\dots,a_{r-1})\in P^{r-1}$ set $f^{\bsa}_i :=f_i-a_if_r$ for $1 \le i \le r-1$.  Put
$\ov\bsa := (\ov a_1,\dots,\ov a_{r-1})$, where $\ov a$ is the image of $a$ in $k$.

If $k$ is algebraically closed, $\{f_1,\dots,f_q,f_r\}$ is $P$-regular for some $q < r$, and 
$\{\qri f1,\dots,\qri fq\}$ is $\gr P$-regular, then the following set is  Zariski-open and not empty:
  \[
\CU :=  \{\ov\bsa\in \mathbb{A}^{r-1}_{k} \mid P/(f^{\bsa}_1,\dots,f^{\bsa}_q) \text{ is c.i.\ of minimal multiplicity}\,\}\,.
  \]
  \end{Theorem}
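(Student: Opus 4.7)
The plan is to show that $\CU$ coincides with the locus where $\{\qr{f_i} - \ov a_i \qr{f_r}\}_{i=1}^q$ is $\gr P$-regular, and then to deduce openness and non-emptiness by applying upper semicontinuity of fiber dimension to a projective family over $\BA^{r-1}_k$.

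First I would observe that the hypothesis forces $\{f_1^{\bsa}, \ldots, f_q^{\bsa}\}$ to be $P$-regular for every $\ov\bsa$, and hence a minimal generating set of its ideal. By permutability of regular sequences in the local Noetherian ring $P$, $\{f_r, f_1, \ldots, f_q\}$ is $P$-regular; since $f_i^{\bsa} \equiv f_i \pmod{(f_r)}$, the sequence $\{f_r, f_1^{\bsa}, \ldots, f_q^{\bsa}\}$ is also $P$-regular, so its subsequence $\{f_1^{\bsa}, \ldots, f_q^{\bsa}\}$ is $P$-regular. Next I would compute the degree-two parts: for $a \in P$ with residue $\ov a \in k$ and $f \in \fp^2$, any lift $\widetilde a \in P$ of $\ov a$ satisfies $(a - \widetilde a) f \in \fp \cdot \fp^2 = \fp^3$, so the class of $af$ in $\fp^2/\fp^3$ equals $\ov a \cdot (f + \fp^3)$ via the $k$-module structure on $\fp^2/\fp^3 \subseteq \gr P$. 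Consequently
\[
\qr{(f_i^{\bsa})} = \qr{f_i} - \ov a_i \qr{f_r} \qquad (1 \le i \le r-1).
\]
Applying Proposition \ref{prop:minmult}(i)$\iff$(vi) with the minimal generating set $\{f_1^{\bsa}, \ldots, f_q^{\bsa}\}$, together with \ref{ch:VV}(1) for the reverse implication, identifies $\CU$ with the set of $\ov\bsa \in \BA^{r-1}_k$ where $\{\qr{(f_1^{\bsa})}, \ldots, \qr{(f_q^{\bsa})}\}$ is $\gr P$-regular. Only the first $q$ coordinates of $\ov\bsa$ enter this condition, so $\CU = \CU_0 \times \BA^{r-1-q}_k$ for the corresponding subset $\CU_0 \subseteq \BA^q_k$, and it suffices to prove $\CU_0$ is open and non-empty.

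Non-emptiness is immediate: at $\ov\bsa = 0$ the sequence reduces to $\{\qr{f_1}, \ldots, \qr{f_q}\}$, which is $\gr P$-regular by hypothesis. For openness, set $d := \edim P = \dim \gr P$ and let $\CZ \subseteq \BA^q_k \times_k \BP^{d-1}_k$ be the closed subscheme cut out by the forms $G_i := \qr{f_i} - a_i \qr{f_r}$ for $1 \le i \le q$, which are homogeneous of degree $2$ in the $\gr P$-variables with coefficients in $k[a_1, \ldots, a_q]$. The first projection $\pi \colon \CZ \to \BA^q_k$ is projective, hence proper, and its fiber over $\ov\bsa$ is $\Proj(\gr P / J_{\ov\bsa})$, where $J_{\ov\bsa} := (G_1(\ov\bsa), \ldots, G_q(\ov\bsa))$. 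Since $\gr P$ is Cohen-Macaulay (being a polynomial ring over $k$), Krull's Principal Ideal Theorem gives $\dim(\gr P / J_{\ov\bsa}) \ge d - q$, with equality exactly when the $q$ generators form a regular sequence; equivalently $\dim \pi^{-1}(\ov\bsa) \ge d - q - 1$, with equality precisely on $\CU_0$. Upper semicontinuity of fiber dimension for the proper morphism $\pi$ then shows $\CU_0$ is Zariski-open in $\BA^q_k$.

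The main delicate point is the openness step: one must projectivize in the $\gr P$-variables to place the family inside a proper morphism, and then invoke Cohen-Macaulayness of $\gr P$ to translate the regular-sequence condition into the numerical fiber-dimension condition to which upper semicontinuity applies.
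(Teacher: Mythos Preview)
Your proof is correct and follows the same strategy as the paper's: translate ``c.i.\ of minimal multiplicity'' for $P/(f^{\bsa}_1,\dots,f^{\bsa}_q)$ into $\gr P$-regularity of $\{\qri f1-\ov a_1\qri fr,\dots,\qri fq-\ov a_q\qri fr\}$ via Proposition~\ref{prop:minmult}, take $\ov\bsa=\boldsymbol 0$ for non-emptiness, and obtain openness from upper semicontinuity of fiber dimension. The paper carries this out with the affine family $k[x_1,\dots,x_q]\to(k[x_1,\dots,x_q]\otimes_k\gr P)/(G_1,\dots,G_q)$ and a direct appeal to \cite[14.8(b)]{Ei2}, whereas you projectivize in the $\gr P$-variables to make the projection proper; this is a packaging difference rather than a different idea, and your added verifications (that $\{f^{\bsa}_1,\dots,f^{\bsa}_q\}$ is $P$-regular, and that $(f^{\bsa}_i)^{\sq}=\qri fi-\ov a_i\qri fr$) fill in steps the paper leaves implicit. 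One small caution: your assertions that $\edim P=\dim\gr P$ and that $\gr P$ is a polynomial ring over $k$ presuppose that $P$ is regular, which the theorem does not state; the paper's proof makes the same tacit assumption (it is needed for the equivalence between ``regular sequence of length $q$'' and ``dimension drops by $q$''), and it holds in every use of the theorem within the paper.
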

 
   \begin{proof}
Let $k[\bsx]$ be the polynomial ring with indeterminates $x_1,\ldots,x_q$. 

The ring $P/(f^{\bsa}_1,\dots,f^{\bsa}_q)$ is c.i.\ of minimal multiplicity if and only if the set 
$\qri{\bsf}{\bsa} := \{\qri f1-\ov a_1\qri fr,\ldots,\qri fq-\ov a_q\qri fr \}$ is $\gr P$-regular; see (i)$\iff$(vi) in 
Proposition \ref{prop:minmult}.  The set $\qri{\bsf}{\bsa}$ is regular if and only if $\dim \gri P{\bsa} \le d-q$ 
holds with $\gri P{\bsa} := \gr P/\gr P\qri{\bsf}{\bsa}$ and $d:=\dim P$. The algebra $\gri P{\bsa}$ is the fiber 
of the canonical map
  \[
k[\bsx] \to (k[\bsx]\otimes_k \gr P)/ (1\otimes\qri f1-x_1\otimes\qri fr,\ldots, 1\otimes\qri fq - x_q\otimes\qri fr) 
  \]
at the maximal ideal $\fn_{\bsa} := (x_1 - \ov a_1,\dots,x_q - \ov a_q)$.  Since fiber dimension is upper 
semicontinuous (see \cite[14.8.b]{Ei2}), $V := \{\ov\bsa\in \mathbb{A}^q_{k} \mid \dim(P_{\bsa}) > d - q\}$ 
is closed in $\mathbb{A}^q_{k}$. Thus the set $\CU$ is open, as it equals ${A}^q_{k} \setminus V$, and 
$\CU$ contains $\boldsymbol 0$ by hypothesis.
 \end{proof}

Free resolutions over c.i.\ rings of minimal multiplicity are known to have special properties.  
We note two, which will be promptly applied in the next section.

  \begin{chunk}
    \label{ch:base_case}
If $Q$ is a local c.i.\ ring of minimal multiplicity and $N$ a $Q$-module, then one has 
$P^Q_N = p^Q_N(z) \cdot (1-z)^{-\cx_Q(N)}$ with $p^Q_N(z) \in \ZZ[z]$ and 
$p^Q_N(1) \ne 0$; see \cite[2.3]{Av:rational}.
  \end{chunk}

   \begin{Proposition}
  \label{prop:Sega}
With notation as in \emph{\ref{ch:base_case}}, $\dim N<\dim Q$ implies $p^Q_N(-1) = 0$.
    \end{Proposition}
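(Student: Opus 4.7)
The plan is to prove the identity
\[
e(Q) \cdot P^Q_N(-1) \,=\, e_Q(\bsy; N)
\]
for a suitable system of parameters $\bsy$ of $Q$, where $P^Q_N(-1)$ is the value of the rational function at $z=-1$. The proposition then follows: since $\bsy$ is a system of parameters for $Q$ but not for $N$ (as $\dim N < \dim Q$), $e_Q(\bsy; N)$ vanishes by Serre's vanishing theorem, hence $P^Q_N(-1) = 0$, and therefore $p^Q_N(-1) = 2^{\cx_Q(N)} P^Q_N(-1) = 0$ by the factorization $P^Q_N = p^Q_N/(1-z)^{\cx_Q(N)}$ from \ref{ch:base_case}.

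First, I reduce to $k$ infinite via an adjustment, which preserves $P^Q_N$, $\dim N$, and the minimal multiplicity hypothesis (see \ref{ch:basics}(2), \ref{ch:adjust}, \ref{ch:VV}(5)). By Proposition \ref{prop:minmult}(iii), $\gr Q$ is Cohen-Macaulay, and for $k$ infinite one can choose a system of parameters $\bsy = (y_1, \ldots, y_d)$ of $Q$ with $d = \dim Q$, $y_i \in \fm \setminus \fm^2$, and $(y_1^*, \ldots, y_d^*)$ a $\gr Q$-regular sequence. Then $\bsy$ is $Q$-regular, the Koszul complex $K^Q(\bsy)$ is a minimal $Q$-free resolution of $Q/\bsy Q$, and $\length(Q/\bsy Q) = e(Q) = 2^{\codim Q}$.

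Next, let $F_\bullet \to N$ be a minimal $Q$-free resolution. Analyzing the complex $F_\bullet \otimes_Q Q/\bsy Q$ via the standard short exact sequences among cycles, boundaries, and homology yields the formal power series identity
\[
2^{\codim Q}\, P^Q_N(z) \,=\, H(z) + (1+z)\, B(z),
\]
where $H(z) := \sum_i \length \Tor^Q_i(N, Q/\bsy Q)\, z^i$ is a polynomial of degree at most $d$ (since $\pd_Q(Q/\bsy Q) = d$) satisfying $H(-1) = e_Q(\bsy; N)$ by the classical Serre computation, and $B(z) := \sum_i \length \operatorname{im}(d_{i+1} \otimes_Q Q/\bsy Q)\, z^i$ has non-negative coefficients. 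Evaluating at $z = -1$ and using that $P^Q_N$ has no pole at $-1$ (\ref{ch:base_case}), the identity follows once one shows that $(1+z)\,B(z)$ vanishes at $z = -1$ --- equivalently, that $B(z)$ has no pole at $z = -1$.

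This last step is the main obstacle. Non-negativity of the coefficients of $B(z)$ suffices when $\cx_Q(N) = 0$, forcing the residue of any simple pole to be zero. When $\cx_Q(N) \ge 1$, the dominant polynomial growth coming from the pole of $P^Q_N$ at $z = 1$ can absorb an oscillating contribution of order $(-1)^i$, so a finer argument is needed that invokes the Koszulness of $\gr Q$ (Proposition \ref{prop:minmult}(iv)) through the structure of the Eisenbud operators acting on the minimal resolution. This is the only place in the argument where the full strength of the minimal-multiplicity hypothesis is used, beyond the mere rationality of $P^Q_N$.
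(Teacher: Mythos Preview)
Your setup is correct through the identity
\[
e(Q)\,P^Q_N(z)\;=\;H(z)\;+\;(1+z)\,B(z),
\]
and you correctly identify the crux: one must show that $B(z)$ has no pole at $z=-1$, equivalently that $(1+z)B(z)$ vanishes there. But you do not actually do this. The final paragraph only asserts that ``a finer argument is needed that invokes the Koszulness of $\gr Q$ \ldots\ through the structure of the Eisenbud operators,'' without supplying any such argument. As you yourself observe, non-negativity of the coefficients of $B(z)$ is insufficient once $\cx_Q(N)\ge 1$: a simple pole at $-1$ contributes an oscillation $(-1)^i$ that is swamped by the polynomial growth from the pole at $1$, so positivity alone rules nothing out. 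It is also not clear how the cohomology operators would control the boundary lengths $\length\,\operatorname{im}(d_{i+1}\otimes Q/\bsy Q)$; they act on $\Ext_Q(N,k)$, not on the filtered pieces of $F\otimes Q/\bsy Q$.

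The paper closes exactly this gap by citing two external results. Because $\gr Q$ is Koszul (Proposition~\ref{prop:minmult}), a theorem of Herzog and Iyengar \cite[5.10]{HI} gives that every finite $Q$-module has \emph{finite linearity defect}. \c{S}ega's theorem \cite[6.2]{S2} then yields directly the identity
\[
p^Q_N(-1)\,e(Q)\;=\;2^{\cx_Q(N)}\,e_Q(N),
\]
from which $p^Q_N(-1)=0$ follows since $\dim N<\dim Q$ forces $e_Q(N)=0$. The mechanism in \c{S}ega's argument is not Eisenbud operators but the \emph{linear part} of the minimal resolution: finite linearity defect means that the associated graded of $F$ with respect to the $\fm$-adic filtration is, in high degrees, a minimal $\gr Q$-resolution of $\gr N$, and this is what ties the Betti numbers of $N$ to Hilbert-series (hence multiplicity) data. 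Your outline heads in the same direction as \c{S}ega's proof but stops precisely where the substantive input---finite linearity defect via Koszulness---must enter.
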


   \begin{proof}
As $\gr Q$ is Koszul (see Proposition \ref{prop:minmult}), the module $N$ has finite linearity 
defect; see Herzog and Iyengar, \cite[5.10]{HI}.  Thus \c Sega' s result \cite[6.2]{S2} applies and, in 
view of the expression for $P^Q_N$ in \ref{ch:base_case}, it yields the first equality in the string
   \[
p^Q_N(-1)e(Q)=(1-(-1))^{\cx_Q(N)}e_Q(N)=2^{\cx_Q(N)}e_Q(N) \,.
  \]

Since $\dim N<\dim Q$ means $e_Q(N)=0$, we obtain $p^Q_N(-1)=0$, as desired.
   \end{proof}
 
\section{An upper bound on granularity}
\label{S:An upper bound on granularity}

Here our goal is to give a concise proof of Theorem \ref{thm:gran-I}.

  \begin{Theorem}
  \label{thm:gran}
Every finite module $M$ over a c.i.\ local ring $(R,\fm,k)$ satisfies 
    \begin{equation}
    \label{eq:gran1}
\gran_R(M) \le \begin{cases}
\codim R - \codim{\qr R} -1 &\text{if}\quad \codim R  \ge \codim{\qr R} + 2\,;
  \\
0 & \text{if}\quad \codim R \le \codim{\qr R} + 1\,.
 \end{cases}
  \end{equation}
   \end{Theorem}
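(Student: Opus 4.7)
The plan is to combine the structure theorem \ref{thm:PG}(1) with Proposition \ref{prop:Sega} through a change-of-rings argument. First I would use \ref{ch:basics}(2), \ref{ch:adjust}, and Lemma \ref{lem:grR} to reduce to the case that $R$ is complete and $k$ is algebraically closed, since these operations preserve $\codim R$, $\codim \qr R$, and $\gran_R(M)$. Setting $c := \codim R$, $q := \codim \qr R$, and $n := c - q$, Theorem \ref{thm:PG}(1) then supplies a surjective local ring map $\rho \colon Q \tra R$ with $(Q,\fq,k)$ c.i.\ of minimal multiplicity, $\codim Q = q$, $\edim Q = \edim R$, and $\ker \rho$ generated by a $Q$-regular sequence $\bar{\bsf}$ of length $n$ lying in $\fq^2$; in other words, $\rho$ is an embedded codimension-$n$ deformation. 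When $n = 0$, $R = Q$ is itself c.i.\ of minimal multiplicity, so \ref{ch:base_case} gives $P^R_M(z) = p(z)/(1-z)^{\cx_R(M)}$ with no pole at $z = -1$, yielding $\gran_R(M) = 0$.

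For $n \ges 1$, the crucial observation is that every finite $R$-module $N$, viewed as a $Q$-module through $\rho$, is annihilated by $\ker \rho$ and hence satisfies $\dim_Q N \le \dim R = \dim Q - n < \dim Q$; consequently \ref{ch:base_case} combined with Proposition \ref{prop:Sega} gives $P^Q_N(z) = p^Q_N(z)/(1-z)^{\cx_Q(N)}$ with $p^Q_N \in \ZZ[z]$ and $p^Q_N(-1) = 0$, so $(1+z)$ divides $P^Q_N(z)$ as a rational function. The next step is to invoke the change-of-rings machinery for the embedded deformation $\rho$---either iterating the Shamash construction, or equivalently exploiting the cohomology operators $\chi_1, \dots, \chi_n$ attached to $\bar{\bsf}$, which make $\Ext_R^*(M, k)$ a finitely generated module over the graded polynomial ring $k[\chi_1, \dots, \chi_n]$ with $\deg \chi_i = 2$---to extract an identity of rational functions
\[
(1-z^2)^n \, P^R_M(z) \;=\; \Phi_M(z),
\]
in which $\Phi_M(z)$ is assembled as a $\ZZ[z]$-linear combination of Poincar\'e series of $R$-modules (namely $M$ together with its $R$-syzygies). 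By the preceding divisibility, $(1+z)$ divides $\Phi_M(z)$. Since $(1-z^2)^n = (1-z)^n(1+z)^n$, cancelling one common factor of $(1+z)$ shows that $(1-z)^n(1+z)^{n-1} P^R_M(z) = \Phi_M(z)/(1+z)$ is rational with no pole at $z = -1$; hence $P^R_M(z)$ has a pole at $z = -1$ of order at most $n - 1 = c - q - 1$, giving $\gran_R(M) \le c - q - 1$. When $n = 1$ this reads $\gran_R(M) \le 0$, matching the second case of \eqref{eq:gran1}.

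The hard part will be establishing the displayed identity. Either of the two mentioned change-of-rings tools produces some relation of the shape $(1-z^2)^n P^R_M(z) = \Phi_M(z)$, but the delicate point is arranging things---likely by iterating through a chain $Q = R_0 \tla R_1 \tla \cdots \tla R_n = R$ of codimension-one embedded deformations and carefully bookkeeping the Cartan--Eilenberg spectral sequences at each stage---so that $\Phi_M(z)$ is expressed purely in terms of Poincar\'e series of $R$-modules, rather than of arbitrary $Q$-modules that might appear as intermediate syzygies over the rings $R_i$. It is exactly this restriction that makes Proposition \ref{prop:Sega} applicable and supplies the extra factor of $(1+z)$ responsible for improving the naive bound $\gran_R(M) \le n$ (which comes from $R$ being c.i.\ of codimension $c = q + n$; see \ref{ch:ci}(3)) to the sharp bound $n - 1$.
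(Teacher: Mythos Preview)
Your high-level plan matches the paper's: pass to an adjustment with large residue field, use Theorem~\ref{thm:PG}(1) to factor a minimal presentation as $R = P/(f_1,\dots,f_c)$ with $Q := P/(f_1,\dots,f_q)$ c.i.\ of minimal multiplicity, handle $n=0$ via \ref{ch:base_case}, and for $n\ge 1$ squeeze an extra factor of $(1+z)$ out of Proposition~\ref{prop:Sega}.  Your observation that every $R$-module $N$ has $\dim_Q N < \dim Q$, so that Proposition~\ref{prop:Sega} forces $(1+z)\mid p^Q_N$, is exactly what the paper exploits in its $n=1$ step.

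The gap is in the passage from $n=1$ to general $n$.  You posit an identity $(1-z^2)^n P^R_M = \Phi_M$ with $\Phi_M$ a $\ZZ[z]$-combination of series $P^Q_{N_i}$ for \emph{$R$-modules} $N_i$, and you rightly flag this as the hard part; but neither the Shamash construction nor the operator formalism produces such an identity.  Iterating the codimension-one exact sequence of \ref{ch:actions}(3) through a tower $Q=R_0\gets\cdots\gets R_n=R$ forces you, at stage $i$, to pass to a high syzygy over $R_i$, and those syzygies are $R_i$-modules, not $R$-modules.  The paper does not try to repair this.  Instead it runs an induction on $n$: a single application of Theorem~\ref{thm:def} to the full deformation $R\gets P$ yields $P^R_N = P^{R_{\bsa}}_N/(1-z^2)$ for a high $R$-syzygy $N$ and a generic~$\bsa$, and then the induction hypothesis is applied to $N$ over $R_{\bsa}$.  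The point your sketch misses entirely is that the generic perturbation $\bsa$ required by Theorem~\ref{thm:def} alters the intermediate rings, so one must separately verify that $\codim\qr{(R_{\bsa})}=q$ still holds---otherwise the induction hypothesis gives no usable bound.  This is secured by Theorem~\ref{thm:family} (a nonempty open set of $\bsa$ keeps $Q_{\bsa}$ c.i.\ of minimal multiplicity) together with Theorem~\ref{thm:PG}(2); intersecting that open set with the one from Theorem~\ref{thm:def} closes the induction.  Your proposal contains no analogue of this compatibility step.
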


As noted in the introduction, a different proof Theorem \ref{thm:gran} was obtained in \cite{APW}.  
Our argument, presented in \ref{ch:pfPG}, proceeds by induction on $n := \codim R - \codim{\qr R}$; 
see \eqref{eq:grR2}.  For this we need yet another way of factoring embedded deformations through 
deformations with specified properties, provided by the next result.

\begin{Theorem} 
    \label{thm:def}
Let $(P,\fp,k)$ be a local ring, $\bsf := \{ f_1,\dots,f_c \}$ a $P$-regular set contained in $\fp^2$, and 
put $R := P/P \bsf$.  For every $\bsa := (a_1,\dots,a_{c-1}) \in P^{c-1}$, set $f^{\bsa}_j := f_j-a_jf_c$ for 
$1 \le j \le c-1$ and put $\bsf_{\bsa} := (f^{\bsa}_1,\dots,f^{\bsa}_{c-1})$ and $R_{\bsa} := P/P \bsf_{\bsa}$.

Let $N$ be an $n$th $R$-syzygy module of a finite $R$-module $M$ that satisfies
  \[
\pd_{P}(M) < \infty = \pd_R(M) \,.
  \]

There exist an integer $\crdeg_R(M) \ge -1$ and a finite set $Z(M)$ of linear varieties in $\mathbb{A}^{c-1}_k$, 
defined in \eqref{eq:pfdef2}, such that the following conditions are~equivalent: 
  \begin{enumerate}[\quad\rm(i)]
   \item
$P^{R}_N = P^{R_{\bsa}}_N \cdot (1-z^2)^{-1}$.
   \item
$n > \crdeg_R(M)$ and $\ov\bsa \notin \bigcup_{V \in Z(M)} V$,
where $\ov\bsa$ is the image of~$\bsa$ in $\mathbb{A}^{c-1}_k$.
  \end{enumerate}
If $k$ is infinite, then $\bigcup_{V \in Z(M)} V \neq \mathbb{A}^{c-1}_k$. 
  \end{Theorem}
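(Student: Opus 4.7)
The key observation is that $\{f_1^{\bsa},\ldots,f_{c-1}^{\bsa},f_c\}$ generates $P\bsf$ and so is $P$-regular; hence $f_c$ is a non-zero-divisor in $R_{\bsa}$ lying in its squared maximal ideal, and $R=R_{\bsa}/(f_c)$ is a codimension-one embedded deformation. My plan is first to translate (i) into a non-zero-divisor condition on a linear form acting on the graded module $\Ext_R^*(N,k)$, then to read that condition off as the complement of a finite union of proper affine linear subvarieties of $\mathbb{A}^{c-1}_k$.

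Let $\chi_1,\ldots,\chi_c$ denote the Eisenbud cohomology operators on $\Ext_R^*(-,k)$ attached to $\bsf$, and let $\chi'_c$ be the operator attached to $f_c$ in the basis $\bsg:=(\bsf_{\bsa},f_c)$ of $P\bsf$. Applying the Avramov--Sun long exact sequence for the codimension-one deformation $R=R_{\bsa}/(f_c)$ and tallying dimensions gives
\[
\beta^{R_{\bsa}}_i(N) \;=\; \beta^R_i(N) - \beta^R_{i-2}(N) + \kappa_{i-2}(N) + \kappa_{i-1}(N),
\]
where $\kappa_j(N):=\dim_k\ker\!\bigl(\chi'_c\colon \Ext_R^j(N,k)\to \Ext_R^{j+2}(N,k)\bigr)$. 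Summing and clearing $(1-z^2)$ shows that (i)---i.e.\ $P^{R_{\bsa}}_N=(1-z^2)P^R_N$---is equivalent to $\kappa_j(N)=0$ for every $j\ge0$, that is, to $\chi'_c$ being a non-zero-divisor on the graded $k[\chi_1,\ldots,\chi_c]$-module $\Ext_R^*(N,k)$. A contragredient change-of-basis computation from $\bsg=A\bsf$ then produces
\[
\chi'_c \;=\; \chi_c + \ov a_1\chi_1 + \cdots + \ov a_{c-1}\chi_{c-1},
\]
so $\chi'_c$ corresponds to the linear form $\ell_{\ov\bsa}(x):=x_c+\ov a_1 x_1+\cdots+\ov a_{c-1}x_{c-1}$ in $k[\chi_1,\ldots,\chi_c]\cong k[x_1,\ldots,x_c]$.

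Under $\pd_P(M)<\infty$, Gulliksen's finiteness theorem makes $\Ext_R^*(M,k)$ a finitely generated graded $k[\chi_1,\ldots,\chi_c]$-module, and dimension shifting identifies $\Ext_R^*(N,k)\cong\Ext_R^{\ge n}(M,k)[-n]$ as graded $k[\chi]$-modules. Consequently $\Ass_{k[\chi]}\Ext_R^*(N,k)=\Ass_{k[\chi]}\Ext_R^{\ge n}(M,k)$; these sets decrease with $n$ and stabilize to a finite collection $\Ass^{\mathsf{st}}(M)$ of homogeneous primes, and this stable set excludes the irrelevant maximal ideal because the socle of a finitely generated graded module over a polynomial ring is concentrated in bounded degrees. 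Let $\crdeg_R(M)$ be the largest $n$ at which the above stabilization has not yet been attained; this is what \eqref{eq:pfdef2} codifies. By the prime-avoidance characterization of non-zero-divisors, for $n>\crdeg_R(M)$ the operator $\chi'_c$ is a non-zero-divisor on $\Ext_R^*(N,k)$ exactly when $\ell_{\ov\bsa}$ lies in no prime of $\Ass^{\mathsf{st}}(M)$. For each $\fP\in\Ass^{\mathsf{st}}(M)$ the locus $\{\ov\bsa\in\mathbb{A}^{c-1}_k:\ell_{\ov\bsa}\in\fP\}$ is cut out by the linear conditions forcing $(1,\ov a_1,\ldots,\ov a_{c-1})$ to lie in the degree-one piece $\fP_1\subseteq k\langle x_1,\ldots,x_c\rangle$; these affine linear subvarieties constitute $Z(M)$. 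Each is proper in $\mathbb{A}^{c-1}_k$, since $\fP_1=k\langle x_1,\ldots,x_c\rangle$ would force $\fP$ to be the irrelevant maximal ideal, already excluded. Thus over an infinite $k$ a finite union of proper affine linear subvarieties cannot cover $\mathbb{A}^{c-1}_k(k)$, completing the argument.

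The main obstacle will be calibrating $\crdeg_R(M)$ so that it is intrinsic to $M$ and so that the stabilization threshold is faithfully captured: one must verify that the dimension-shift isomorphism $\Ext_R^*(N,k)\cong\Ext_R^{\ge n}(M,k)[-n]$ intertwines the $k[\chi]$-actions (so the associated primes really match), and then pin down the smallest $d\ge -1$ beyond which $\Ass_{k[\chi]}\Ext_R^{\ge n}(M,k)$ is both constant and socle-free; this is what the technical definition in \eqref{eq:pfdef2} should encode.
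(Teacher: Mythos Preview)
Your proposal is correct and follows essentially the same route as the paper's proof: translate (i) into the condition that $\chi^{\ov\bsa}_c = \chi_c + \sum_j \ov a_j\chi_j$ is a non-zero-divisor on $\CN := \Ext_R^*(N,k)$ via the long exact sequence for the deformation $R_{\bsa}\tra R$, identify $\CN \cong \Ext_R^{\ge n}(M,k)(n)$ as graded $\CR$-modules via the iterated connecting maps, and then read off non-zero-divisor as avoidance of the associated primes of the truncation. The paper's definition of $\crdeg_R(M)$ in \eqref{eq:pfdef2} is precisely the top degree of the $\CR_{>0}$-socle of $\Ext_R^*(M,k)$, which is exactly your ``stabilization threshold'' (since $\Ass_{\CR}(\CM_{\ge n})$ differs from $\Ass_{\CR}(\CM)\setminus\{\CR_{>0}\}$ only in whether $\CR_{>0}$ appears); the paper's $Z(M)$ is your collection of affine-linear loci $\{\ov\bsa : \ell_{\ov\bsa}\in\fP_2\}$ indexed by the non-irrelevant associated primes. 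The one technical point you flag---that the dimension-shift isomorphism respects the $\CR$-action---is handled in the paper by citing that the operators factor through Yoneda products and hence commute with connecting homomorphisms.
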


 \begin{remark}
    \label{rem:ops}
The prototype of Theorem \ref{thm:def} is \cite[Theorem 3.1]{Ei1}, and both proofs utilize rings of cohomology 
operators defined by the deformation $R \tla Q$.  Such a structure was introduced by Gulliksen \cite{Gu1},
and theories with similar properties were produced in \cite{Me, Ei1, Av:vpd, AS, AGP2, AB} from a priori 
incomparable constructions.

We present the proof in detail because the argument for \cite[3.1]{Ei1} is incomplete and references 
to several sources are needed to fill in the gaps. The facts that we use are listed below, along with 
pointers to the earliest published proof.
    \end{remark}

  \begin{chunk}
    \label{ch:actions}
The hypotheses in the opening sentence of Theorem \ref{thm:def} are in force.

Let $X$ denote the $k$-vector space $P\bsf/\fm \bsf$ and $\ov\bsf$ its basis $\{\ov f_1,\dots, \ov f_c\}$,
consisting of the images of $f_j$ for $j = 1,\dots,c$.  Let $\{\chi_1,\dots, \chi_c\}$ be the dual basis of the
vector space $\CX := \Hom_k(X,k)$ and let $\CR$ be the symmetric algebra of the graded vector space 
that has $\CX$ in degree $2$ and $0$ in all other degrees.  We identify $\CR$ and the graded polynomial 
ring $k[\chi_1,\dots, \chi_c]$ with indeterminates of degree $2$.

The graded vector space $\Ext_R(M,k) := \bigoplus_{i \ges 0} \Ext^{i}_R(M,k)$ supports a structure of 
graded left $\CR$-modules that has the following properties: 
   \begin{enumerate}[\rm(1)]
  \item
The assignment $? \rightsquigarrow \Ext_R(?,k)$ is a contravariant additive functor from the category of 
$R$-modules to that of graded $\CR$-modules; see \cite[Theorem 3.1(i)]{Gu1}. 
  \item
The connecting maps in cohomology sequences induced by short exact sequences of $R$-modules 
commute with the actions of $\CR$; this holds as \cite[Theorem, p.\,700]{AS} shows that 
the action of $\CR$ factors through Yoneda products. 
  \item
For $R' := P/(f_1,\dots, f_{c-1})$ and for every $i \in \ZZ$ there is an exact sequence 
   \[
\Ext^{i-2}_{R}(M,k) \xra{\chi_c} \Ext^{i}_{R}(M,k) \to \Ext^i_{R'}(M,k) 
\to \Ext^{i-1}_{R}(M,k) \xra{\chi_c} \Ext^{i+1}_{R}(M,k) 
   \]
See \cite[Theorem 2.3]{Av:vpd}; it is implicit in \cite[Formula (8) on p.~178]{Gu1}. 
  \item
The $\CR$-module $\Ext_R(M,k)$ is finitely generated if the $R$-module $M$ is finite with $\pd_P M < \infty$;  
see \cite[Theorem 3.1(ii)]{Gu1}.  The proof of \cite[Theorem 3.1]{Ei1} is flawed: it uses
\cite[Proposition 1.6]{Ei1} whose proof is invalid; see \cite[Remark~4.2]{AS}.
  \end{enumerate}
The properties of $\CR$-modules described above can be used concurrently because the results 
in \cite[Section 4]{AS} show that the sets of operators produced by the constructions in 
\cite{Me, Ei1, Av:vpd, AS, AGP2, AB} differ at most by some sign.
  \end{chunk}

At a final stop before the proof in \ref{ch:pfdef} we introduce notation to describe $Z(M)$. 

 \begin{chunk} 
    \label{ch:perp}
Let $k$ be a field, $\CX$ a $k$-vector space, and $\{\chi_j\}_{1 \les j\les c}$ a basis of $\CX$.  

Let $\CV \subseteq \CX$ be a subspace of rank $d$.  If $\CV \ne 0$ let $\big\{\sum_{j=1}^c a_{l,j}\chi_j \big\}_{1\les l \les d}$ 
with $a_{l,j} \in k$ be a basis of~$\CV$; let $k[\bsx] := k[x_1,\dots,x_{c-1}]$ be a polynomial ring, put
  \[
A(\bsx) := 
\begin{pmatrix}
a_{1,1}     & a_{1,2}    & \dots   & a_{1,d}     & x_1        \\
a_{2,1}     & a_{2,2}    & \dots   & a_{2,d}     & x_2        \\
 \vdots      &  \vdots     & \ddots &  \vdots     &  \vdots   \\
a_{c-1,1}  & a_{c-1,2} & \dots   & a_{c-1,d}  & x_{c-1}  \\
a_{c,1}     & a_{c,2}     & \dots   & a_{c,d}    & 1           \\
\end{pmatrix} 
  \]
and let $\CV^{\perp}$ be the zero set in $\mathbb{A}^{c-1}_k$ of the maximal minors of $A(\bsx)$ that contain the 
last column; also, set $0^{\perp} := \emptyset$.  For $\bsu := (u_1,\dots,u_{c-1}) \in \mathbb{A}^{c-1}_k$, one has:
  \begin{equation}
    \label{eq:perp1}
\bigg[ \sum_{j=1}^{c-1} u_i\chi_j +\chi_c \in \CV\bigg] \iff \big[\rank_k A(\bsu) = d \,\big] \iff \big[\bsu \in \CV^\perp\big] \,.
  \end{equation}

If these conditions hold, then $\CV^\perp$ is a linear subvariety of $\mathbb{A}^{c-1}_k$.
   \end{chunk}

  \begin{chunk}
  \label{ch:pfdef}
\emph{Proof of Theorem \emph{\ref{thm:def}}}.
We keep the notation in the statement of the theorem.

A minimal free resolution $F$ of $M$ yields an exact sequence of $R$-modules
  \begin{equation}
    \label{eq:pfdef1}
0 \to N \to F_{n-1} \to \cdots \to F_0 \to M \to 0
  \end{equation}
finite free $F_i$'s.  Thus $N$ satisfies $\pd_{R}(N) = \infty > \pd_P(N)$, and hence 
   \[
\CM := \bigoplus_{i \in \ZZ} \Ext^i_R(M,k) \quad\text{and}\quad \CN := \bigoplus_{i \in \ZZ} \Ext^i_R(N,k)
  \]
 are finitely 
generated graded left $\CR$-modules; see \ref{ch:actions}(4).  As $\bsf_{\bsa} \cup \{f_c\}$ minimally generates 
$P\bsf$, the set $\{\ov {f^{\bsa}_1},\dots,\ov {f^{\bsa}_{c-1}}\}\cup\{\ov {f_c}\}$ is a $k$-basis of the vector space 
$X$ defined in \ref{ch:actions}.  The dual basis of the space $\Hom_k(X,k) = \CR_{2}$ is the set
$\{\chi^{\ov\bsa}_1,\dots,\chi^{\ov\bsa}_c\}$ of the  
$\CX := \CR_{2}$ has $\chi^{\ov\bsa}_j = \chi_j$ for $j \le i \le c - 1$ and $\chi^{\ov\bsa}_c = \sum_{j=1}^{c-1} \ov a_j\chi_j +\chi_c$.   

Put $R_{\bsa} := P/P \bsf_a$.  From \ref{ch:actions}(3) we get an exact sequences of $k$-vector spaces
  \[
0 \to \CK_{i-2} \to \Ext^{i-2}_{R}(N,k) \to \Ext^i_{R}(N,k) \to \Ext^i_{R_{\bsa}}(N,k) \to \CK_{i - 1} \to 0
  \]
with $\CK_i := \{\nu \in \Ext^i_{R}(N,k) \mid \chi^{\ov\bsa}_c \nu = 0\}$ for $i \in \ZZ$.  The resulting equality
  \[
(1 - z^2) P^R_N = P^{R_{\bsa}}_N  - (1 + z) \sum_{i \ges 0} \rank_k \CK^i z^i 
  \]
shows that condition (i) in the theorem is equivalent to the following condition:
  \begin{enumerate}[\quad\rm(i)]
   \item[\rm{(i$'$)}]
$\chi^{\ov\bsa}_c = \sum_{j=1}^{c-1} \ov a_j\chi_j +\chi_c$ is $\CN$-regular.
  \end{enumerate}

The iterated connecting maps $\Ext^i_{R}(N,k) \to \Ext^{i+n}_{R}(M,k)$ defined by the exact 
sequence \eqref{eq:pfdef1} are bijective.  In view of \ref{ch:actions}(2) they coalesce into an 
isomorphism $\CN \cong \CM_{\ges n}(n)$ of graded $\CR$-modules.  Put 
$\Ass^{\circ}_{\CR}(\CM) := \Ass_{\CR}(\CM) \setminus \{ \CR_{>0} \}$, and also
 \begin{equation}
  \label{eq:pfdef2}
 \begin{aligned}
\crdeg_R(M) & := \sup\{i \in \ZZ \mid \Ann_{\CR}(\mu) = \CR_{>0} \text{ for some } \mu \in \CM_{i} \} \,;
  \\       
Z(M) &:= \{(\CP_2)^{\perp} \subseteq \mathbb{A}^{c-1}_k \mid \CP \in \Ass^{\circ}_{\CR}(\CM) \}
\text{ with } ?^\perp \text{ defined in } \ref{ch:perp} \,.
       \end{aligned}
       \end{equation}

We complete the proof of the theorem by showing that (i$'$) is equivalent to~(ii).

The number $t := \crdeg_R(M)$ is an integer because $(0:_{\CM} \CR_{>0})$ is a homogeneous 
subspace of $\CR$ and has finite $k$-rank.  The following statements are equivalent:
  \begin{equation}
    \label{eq:pfdef3}
\big[\Ass_{\CR}(\CM_{\ges n}) = \Ass^{\circ}_{\CR}(\CM) \big] 
 \iff \big[\CR_{>0} \notin \Ass_{\CR}(\CM_{\ges n})\big] 
 \iff \big[ n > t \big] \,.
  \end{equation}
Indeed, we have $\Ass_{\CR}(\CM_{\ges n}) \subseteq \Ass_{\CR}(\CM) \subseteq \Ass_{\CR}(\CM_{\ges n})\cup \{\CR_{>0}\}$ 
as $\CM/\CM_{\ges n}$ is of finite length; the implication $\Leftarrow$ on the left follows, the rest hold by definition.

As $\CM/\CM_{< t}$ is not zero, we have $Z := Z(M) \ne \emptyset$.  Condition (i$'$) is equivalent to 
$\chi^{\ov\bsa} \notin \bigcup_{\CP\in \Ass_{\CR}(\CM_{\ges n})} \CP$, and hence to $n > t$ and 
$\chi^{\ov\bsa} \notin \bigcup_{\CP\in Z} \CP$, due to \eqref{eq:pfdef2}. The last exclusion is equivalent to 
$\chi^{\ov\bsa} \notin \bigcup_{\CP\in Z} \CP_{2}$ (as each $\CP$ is homogeneous), which can be rewritten 
as $\ov\bsa \notin \bigcup_{\CP \in Z} (\CP_{2})^\perp$, by \eqref{eq:perp1}.  Finally, recall that affine spaces 
over infinite fields are not unions of finitely many proper linear subvarieties.  
 \qed
   \end{chunk}

  \begin{remark}
    \label{rem:incomplete}
A \emph{critical degree} is defined in \cite[7.1]{AGP2} for every nonzero finite module over any local ring
in terms of chain endomorphisms of its minimal free resolutions; in the context of Theorem \ref{thm:def} 
it is equal to the integer in \eqref{eq:pfdef2}; see \cite[7.2(1)]{AGP2}. 

A priori estimates for the critical degree are known in case $\pd_Q M$ is finite for some
deformation $R \tla Q$; they involve the number $g := \depth R - \depth_RM$:
   \begin{itemize}
   \item
$\crdeg_R(M) = g$ if $\cx_R M \le 0$, by the Auslander-Buchsbaum Equality. 
   \item
$\crdeg_R(M) \le g$ if $\cx_R M = 1$; see \cite[5.3 and 6.1]{Ei1} and \cite[7.3(1)]{AGP2}.
   \item
$\crdeg_R(M) \le g + \max\{2\beta^R_g -2, 2\beta^R_{g+1} -1\}$ if $\cx_R M = 2$; see \cite[7.6]{AB}).
   \end{itemize}

The last assertion of Theorem \ref{thm:def} may fail when $k$ is finite; see \cite[6.7]{Av:vpd}.\smallskip
  \end{remark}

  \begin{chunk}
  \label{ch:pfPG}
\emph{Proof of Theorem \emph{\ref{thm:gran}}}.
Set $c := \codim R$, $q := \codim{\qr R}$, and $n := c - q$.

We argue by induction on $n$.  When $n = 0$ the ring $R$ has minimal multiplicity (see Proposition
\ref{prop:minmult}), and then \ref{ch:base_case} yields $\gran_R M = 0$; this is the desired result.  

Now we assume $n \ge 1$ and set out to prove $\gran_R(M) < n$.  The invariants in play do not change 
under adjustments of $R$; see \ref{ch:basics}(2) and \eqref{eq:grR3}.  Due to Theorem \ref{thm:PG}(1) 
we may assume $k$ algebraically closed and $R = P/P\bsf$ for some regular local ring $(P,\fp,k)$ and 
$P$-regular sequence $\bsf: = (f_1,\ldots,f_{c})$ contained in $ \fp^2$ such that 
$Q := P/(f_1,\ldots,f_q)$ is a c.i.\ ring of minimal multiplicity and $\codim \qr Q =  q < c$. 

For every $\bsa := (a_1,\dots,a_{c-1})\in P^{c-1}$ and $1 \le i \le c-1$, put $f^{\bsa}_i :=f_i-a_if_c$.
The deformation $R \tla P$ factors as a composition of deformations
  \[
R \tla R_{\bsa} := P/(f^{\bsa}_1,\dots,f^{\bsa}_{c-1}) \tla
Q_{\bsa} := P/(f^{\bsa}_1,\dots,f^{\bsa}_{q}) \tla P \,.
  \]
Let $\ov \bsa$ be the image of $\bsa$ in $\mathbb{A}^{c-1}_k$.  Theorem \ref{thm:def} yields an 
$R$-syzygy module $N$ of $M$ and a non-empty Zariski-open set $\CU_1$ of 
$\mathbb{A}^{c-1}_k$ such that $\ov\bsa\in \CU_1$ implies  
    \begin{equation}
  \label{eq:pfPG1}
P^{R}_N = P^{R_{\bsa}}_N \cdot (1-z^2)^{-1} \,.
    \end{equation}
Theorem \ref{thm:family} produces a non-empty Zariski-open set $\CU_2 \ne \emptyset$ of $\mathbb{A}^{c-1}_k$ such 
that for each $\ov\bsa\in \CU_2$ the ring $Q_{\bsa}$ is c.i.\ of codimension $q$ and minimal multiplicity.  Note that 
$\CU_1 \cap \CU_2$ is not empty and choose $\bsa$ with $\ov\bsa \in \CU_1 \cap \CU_2$.

If $n = 1$, then $R_{\bsa} = Q_{\bsa}$ and $\dim N  < \dim R_{\bsa}$ hold and we obtain
    \[
P^{R}_N = \frac{P^{R_{\bsa}}_N}{(1-z^2)} 
= \frac{(1+z)\cdot p(z)}{(1-z)^{\cx_{R_{\bsa}}(N)} \cdot (1-z^2) }  
=  \frac{p(z)}{(1-z)^{\cx_R(N) + 1}}
    \]
from \eqref{eq:pfPG1}  and Proposition \ref{prop:Sega}.  This gives $\gran_{R} N = 0$, and hence $\gran_{R} M = 0$ 
(see \ref{ch:basics}(1)); thus \eqref{eq:gran1} holds for $n=1$.  When $n\ge 2$ we may suppose, by induction, that 
\eqref{eq:gran1} holds for local rings $S$ with $\codim S - \codim{\qr{S}} < n$.  Referring to \ref{ch:basics}(1), 
\eqref{eq:pfPG1}, the induction hypothesis, and Theorem \ref{thm:PG}(2) we get
   \begin{align*}
\gran_{R} M = \gran_{R} N  \le \gran_{R_{\bsa}} N + 1  < \codim R_{\bsa} - \codim \qr{(R_{\bsa})} + 1 = c - q \,.
   \end{align*}

The induction step is complete, and with it the proof of \eqref{eq:gran1}.  
  \qed
   \end{chunk}

\section{The upper bound is optimal}
\label{S:The upper bound is optimal}

In this section we prove that the upper bound on granularity, established in Theorem 
\ref{thm:gran}, cannot be tightened in general; see Theorem \ref{thm:optimal} below.  

\begin{chunk}
     \label{ch:varphi}
Let $\varphi \col (R,\fm,k)\tra (S,\fn,k)$ be a surjective homomorphism of local rings.  
Choose a minimal Cohen presentation $\rho\col (P,\fp,k)\tra\wh R$. Put $I:=\Ker(\rho)$ 
and $\wJ := \Ker(\wh\varphi\rho)$, and choose a subset $\bst$ of $\wJ$ that is mapped 
bijectively onto some $k$-basis of $\wJ/\wJ\cap\fp^2$.  Put $(Q,\fq,k):=(P/P\bst,\fp/P\bst,k)$
and choose in $\fp$ a subset that is is mapped bijectively onto some minimal set of generators $\fq$.  
The exact sequence 
 \[
0 \to \wJ/\wJ\cap\fp^2 \to \fp/\fp^2 \to \fq/\fq^2 \to 0
  \]
of $k$-vector spaces shows that $\bst\sqcup\bsu$ minimally generates $\fp$.  Thus $\wh S \cong Q/J$ 
with $J := P/ I$ is a minimal Cohen presentation.  As $P^Q_{J}$ is an invariant of $S$ (cf.\ \ref{ch:Betti}),
so is the first integer defined below; the second one is an invariant of $\varphi$ (see~\cite{Av:golod}):
  \[
m(S) := \max \{n \in \NN_0 : (1+z)^n \, |\, (z^2 P^Q_{J} -1) \} 
   \ \text{ and }\  
a(\varphi) := \rank_k(I/I\cap\fp \wJ) \,,
  \]
    \end{chunk}

  \begin{Theorem}
    \label{thm:optimal}
If $d,c,q,a$ are integers that satisfy $d \ge c \ge q,a  \ge 0$, then there exist a c.i.\ local ring $(R,\fm,k)$ 
and a residue ring $S$ of $R$ with $\fm^3 S =0$ such that  
    \begin{gather}
      \label{eq:optimal1}
(\edim R,\codim R,\codim \qr R, a(\varphi)) = (d,c,q,a) \quad\text{and}
   \\  
   \label{eq:optimal2}
\gran_R(S) = \max\{c - q - 1\,,0\} .
    \end{gather}

As a consequence, the upper bound in Theorem \emph{\ref{thm:gran}} is optimal.
  \end{Theorem}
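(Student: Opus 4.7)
The plan is to prove Theorem \ref{thm:optimal} by explicit construction. Using Theorem \ref{thm:PG}(1) as a guide, start with a regular local ring $(P,\fp,k)$ of embedding dimension $d$. Pick a $P$-regular sequence $f_1,\ldots,f_q$ of quadrics so that $Q:=P/(f_1,\ldots,f_q)$ is c.i.\ of minimal multiplicity with $\codim Q=q$ and $\edim Q=d$ (cf.\ Proposition \ref{prop:minmult}), and extend by elements $f_{q+1},\ldots,f_c\in\fp^3$ chosen so that $\{f_1,\ldots,f_c\}$ remains $P$-regular (such extensions exist whenever $d\ge c$, by a dimension count, and can be made generic enough that no new quadratic initial forms arise). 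Setting $R:=P/(f_1,\ldots,f_c)$ gives a c.i.\ local ring with $(\edim R,\codim R)=(d,c)$, and since the $f_j$ with $j>q$ contribute only to $I^*_{\ges 3}$, the ideal $\qr I$ is generated by $\qri f1,\ldots,\qri fq$, so $\codim\qr R=q$ by Proposition \ref{prop:minmult}.

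Next, construct $S$ as a residue ring $R/\fa$ with $\fm^3 S=0$, designed both to realize $a(\varphi)=a$ and to achieve granularity $c-q-1$. A natural candidate is $\fa=\fm_R^3+\fa'$, where $\fa'$ is lifted from a subspace of $\fm_R/\fm_R^2$ whose codimension matches the prescribed value of~$a$; because $a(\varphi)=\rank_k(I/(I\cap\fp\wJ))$ depends linearly on the lift of $\fa$ to $P$, one can adjust this choice to hit any value between $0$ and the full codimension. To get the granularity, the key is to engineer $\fa'$ so that $S$, viewed as a module over the intermediate c.i.\ ring $Q$ of minimal multiplicity, has a Poincar\'e series with the tame form $P^Q_S=p^Q_S(z)/(1-z)^{\cx_Q(S)}$ (cf.\ \ref{ch:base_case}) in which $p^Q_S(-1)\ne 0$. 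Since $Q\tla P$ is a deformation with $\pd_P S<\infty$, this is ensured by choosing $\fa'$ so that $S$ has a nontrivial component in top dimension over $Q$, avoiding the vanishing in Proposition \ref{prop:Sega}.

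The main step is to compute $P^R_S$ and verify that the pole at $z=-1$ has order exactly $c-q-1$. I would factor $R\tla Q$ as a sequence of $c-q$ single-element deformations and track the Poincar\'e series along this tower using the cohomology-operator machinery from \ref{ch:actions}: each deformation step multiplies the denominator by one factor of $(1-z^2)$, provided the relevant operator $\chi_j$ acts as a non-zero-divisor on $\Ext_R(N,k)$ for a sufficiently high syzygy $N$ of $S$, exactly as in the proof of Theorem \ref{thm:gran} (see \ref{ch:pfPG}). Applied $c-q$ times, this yields
\[
P^R_S \;=\; \frac{p^Q_S(z)}{(1-z)^{\cx_Q(S)}\,(1-z^2)^{c-q}}\cdot(\text{finite correction}),
\]
whose denominator carries $(1+z)^{c-q}$. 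Combined with the single vanishing $p^Q_S(-1)\ne 0$ arranged in the previous step, this produces a pole of order exactly $c-q-1$ at $z=-1$, i.e., $\gran_R(S)=c-q-1$.

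The main obstacle will be the nonvanishing check at $z=-1$: ensuring that the numerator of $P^R_S$ after all reductions does not absorb a factor of $(1+z)$. I expect this to hinge on choosing $\fa'$ (and hence $S$) so that $S$ is genuinely supported in a way that keeps $\dim S=\dim Q$ impossible (to exploit Proposition \ref{prop:Sega} inversely) while still having $\fm^3 S=0$; a careful case split between $c>q$ and $c=q$ (in which case the bound forces $\gran_R(S)=0$ and one takes $S=k$, whose Poincar\'e series $(1+z)^{d-c}/(1-z)^c$ trivially has no pole at $-1$) will close the argument. The "as a consequence" clause is then immediate from Theorem \ref{thm:gran}.
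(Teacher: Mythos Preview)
Your proposal has a genuine gap at its core. You plan to arrange $p^Q_S(-1)\ne 0$ by choosing $S$ so that it ``has a nontrivial component in top dimension over $Q$,'' thereby avoiding the vanishing in Proposition~\ref{prop:Sega}. But this is impossible: the theorem requires $\fm^3 S=0$, so $S$ is Artinian and $\dim_Q S=0$; whenever $d>q$ one has $\dim Q=d-q>0$, and Proposition~\ref{prop:Sega} then \emph{forces} $p^Q_S(-1)=0$. There is no inverse use of that proposition available to you. Moreover, the iterated-deformation scheme you sketch, based on Theorem~\ref{thm:def}, only yields $P^R_N=P^{R_{\bsa}}_N/(1-z^2)$ for a sufficiently high syzygy $N$, not for $S$ itself; passing from $N$ back to $S$ introduces a polynomial correction that you cannot control at $z=-1$ without an independent computation of $P^R_S$. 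This is exactly why that machinery produces the \emph{upper} bound in Theorem~\ref{thm:gran} but not the lower bound needed here. Your ``finite correction'' hides precisely the quantity you need to determine.

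The paper's proof uses a completely different technique. It writes down $R=P/I$ and $S=P/\wJ$ explicitly (with $I$ generated by specific powers $t_i^2,u_j^2,u_l^3,u_m^4$ of parameters, split into two cases according to whether $q>a$ or $q\le a$), observes that $\wJ=I_2(U)+\fq^3$ for an explicit $2\times(q+1)$ matrix $U$ adequate for a regular system of parameters, and then invokes Theorem~\ref{thm:gring}. That theorem rests on the closed-form expression~\eqref{eq:relations3} for $P^R_S$ when $R$ is c.i.\ and $S$ is Golod, together with the determinantal computation of $P^Q_J$ in Theorem~\ref{thm:generic}; from this one reads off the order of the pole at $z=-1$ directly. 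No cohomology operators, syzygies, or genericity arguments are involved, and the verification of $a(\varphi)=a$ is a direct inspection of which generators of $I$ lie in $\fp\wJ$.
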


The proof of the theorem, presented in \ref{ch:pfthmoptimal}, has two crucial ingredients.
The first one is a closed formula for the Poincar\'e series of Golod residue rings of c.i.\ rings. 

  \begin{chunk}
     \label{ch:relations}
We assign nicknames to invariants of $R$, $S$, and $\varphi$ defined in \ref{ch:regular}, \ref{ch:Betti}, and \ref{ch:varphi}:
  \begin{equation}
    \label{eq:relations1}
   \begin{alignedat}{4}
d &:=  \edim R \,, \quad  & c & :=  \codim R \,, \quad & q  &:=  \codim \qr R \,,  \quad  & r & := \rel R \,;  
  \\
e & :=  \edim S \,, & m &:= m(S) \,; & a & := a(\varphi) \,.
   \end{alignedat}
  \end{equation}
These numbers compare as follows:
   \begin{equation}
    \label{eq:relations2}
0 \le q \le c \le d \ge e \ge m \ge 0 \le a \le r \ge c \ \text{ and }\ r = c \iff R \ \text{ is c.i.} 
  \end{equation}

Recall that the ring $S$ is said to be \emph{Golod} if it satisfies the relation
  \[
P^S_k = (1+z)^e/(1 - z^2 P^Q_{J})
  \]
for some (and hence, for every -- see \ref{ch:Betti}) minimal Cohen presentation $\wh S \cong Q/J$.

If $R$ is c.i.\ and $S$ is Golod, then the following equality holds; see \cite{Av:golod}:
   \begin{align}
    \label{eq:relations3}
P^R_{S} &= \frac{(1 + z)^{a + 1}(1 - z)^a + z^2P^Q_{J} -1}{z(1 +z )^{c - d + e}(1 - z)^{c}} \,.
   \end{align}
The very special case $S = R/\fm^2$ of this result first appeared as \cite[Theorem 2.1]{Av:rational}.
   \end{chunk}
   
The second ingredient is the next theorem, where we identify families of Golod residue rings $S$ of an 
\emph{arbitrary} c.i.\ ring $R$ and express their granularities in terms of the numbers in \eqref{eq:relations1}.  
The construction of the rings $S$ and the computation of their invariants utilize a different set of 
techniques; they are deferred to Section~\ref{S:A family of Golod homomorphisms}.

 \begin{Theorem}
     \label{thm:gring}
Let $(Q,\fq,k)$ be a regular local ring of dimension $e\ge1$, $\bsu$ a regular system of parameters,
$U$\! a $2\times(h+1)$ matrix with $h\ge1$ and entries in $\bsu\cup\{0\}$, $I_2(U)$ the ideal 
generated by the $2 \times 2$ minors of $U$, and $J := I_2(U) + \fq^3$.  Put $S := Q/J$, let
$\varphi \colon R \tra S$ be a surjective ring map, and let $d,c,a,m$ be as in \eqref{eq:relations1}.

If $U$ is adequate for $\bsu$ (see \emph{\ref{ch:det}}), then $h \le e$ holds and $S$ is Golod.  
If, furthermore, $R$ is c.i., then $\gran_R(S)$ depends on the position of $a$ respective to $h$ and $e$, as follows:
    \begin{enumerate}[\quad\rm(a)]
    \item
$h = e$; this is equivalent to $S = Q/\fq^2$ and it implies $m = e$ and 
 \begin{equation}
  \label{eq:gring1}
\gran_R(S) = 
  \begin{cases}
\max\{ c - d + e - a - 1 , 0\} &\text{if}\quad a \le e - 2\,;
 \\
0 &\text{if}\quad a \ge e-1 \,.
     \end{cases}
         \end{equation}
    \item
$h \le e - 1$; this implies $m = h + 1$ and 
 \begin{equation}
   \label{eq:gring2}
\gran_R(S) = 
  \begin{cases}
\max\{c - d + e - a - 1 , 0\}  &\text{if}\quad a \le h - 1 \,;
  \\
\max\{ c - d + e - h - 1 , 0\}  &\text{if}\quad a \ge h \,. 
  \end{cases}
           \end{equation}
   \end{enumerate}
  \end{Theorem}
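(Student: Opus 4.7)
The plan is to reduce the theorem, via formula \eqref{eq:relations3}, to computing the order of vanishing at $z=-1$ of the numerator
\[
N(z) := (1+z)^{a+1}(1-z)^a + \bigl(z^2 P^Q_J - 1\bigr)
\]
of $P^R_S$. Setting $\epsilon := c-d+e$, the denominator $z(1+z)^\epsilon(1-z)^c$ contributes exactly $\epsilon$ to the pole order at $z=-1$, so $\gran_R(S) = \max\{0,\epsilon-\delta\}$, where $\delta := \mathrm{ord}_{z=-1}(N)$. The first summand of $N$ vanishes to exact order $a+1$ at $z=-1$ because $2^a \ne 0$, and the second to exact order $m := m(S)$ by the definition in \ref{ch:varphi}. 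Thus the strategy is to (i) verify $S$ is Golod so that \eqref{eq:relations3} applies, (ii) pin down $m(S)$ as $e$ in case (a) and $h+1$ in case (b), and (iii) analyze the two summands of $N$ at $z=-1$.

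First I would settle the structural preliminaries. The bound $h \le e$ and Golodness of $S$ should both follow from the adequacy hypothesis of \ref{ch:det}: adequacy is designed so that $I_2(U)$ has the expected codimension $h$, which forces $h \le \dim Q = e$, and so that Koszul cycles assembled from the rows of $U$, together with cycles coming from $\fq^3$, support a trivial-Massey-product structure on the Koszul complex of $\bsu$, yielding Golodness of $Q/J$. Next I would compute $P^Q_J$ in both cases. In case (a), adequacy with $h = e$ forces the $2\times 2$ minors of $U$ to span $\fq^2$ modulo $\fq^3$, so $J = \fq^2$ and $S = Q/\fq^2$; a direct computation using the standard resolution of $Q/\fq^2$ then gives $m = e$. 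In case (b), an Eagon--Northcott-style resolution of $I_2(U)$, combined with the obvious resolution of $\fq^3$, yields an explicit expression for $P^Q_J$ from which $m = h+1$ emerges.

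With $m$ in hand, the granularity breaks into cases according to the comparison of $a+1$ and $m$. When $a+1 < m$, we get $\delta = a+1$ and $\gran_R(S) = \max\{0,\epsilon - a - 1\}$, which is the first branch of both \eqref{eq:gring1} and \eqref{eq:gring2}. When $a+1 > m$, we get $\delta = m$ and $\gran_R(S) = \max\{0,\epsilon - m\}$: in case (a) this reduces to $\max\{0,c-d\}=0$ by \eqref{eq:relations2}, and in case (b) to $\max\{0,c-d+e-h-1\}$, the second branch of \eqref{eq:gring2}. The delicate boundary case is $a+1 = m$, where a priori $\delta \ge m$ with possible strict inequality. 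In case (a) with $a = e-1$, the bound $\epsilon - \delta \le \epsilon - e = c-d \le 0$ still forces $\gran_R(S) = 0$ regardless of further cancellation. In case (b) with $a = h$, however, the quantity $\epsilon - m = c-d+e-h-1$ may be strictly positive (precisely when $h \le e-2$), and one must then verify that $\delta$ is exactly $h+1$; equivalently, that the coefficient $2^h + \alpha$ of $(1+z)^{h+1}$ in $N$ at $z=-1$ is nonzero, where $\alpha$ denotes the leading Taylor coefficient of $z^2 P^Q_J - 1$ at $z=-1$. This non-cancellation check is the main obstacle: it requires finer information than the order $m$ itself, specifically an evaluation of $\alpha$ from the explicit minimal free resolution of $J$ constructed in the second step.
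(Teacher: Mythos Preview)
Your overall strategy is exactly the paper's: feed the explicit form of $z^2P^Q_J - 1$ into \eqref{eq:relations3} and read off the pole order of $P^R_S$ at $z=-1$ by comparing the vanishing orders $a+1$ and $m$ of the two summands in the numerator. Your case analysis---including the observation that in case (a) with $a \ge e-1$ one need not determine $\delta$ exactly, since $\epsilon - \delta \le c-d \le 0$ already forces granularity zero---is correct and in fact slightly more economical than the paper's, which computes $\delta$ exactly in that range too.

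Two differences in the preparatory steps deserve comment. For Golodness and for the computation of $P^Q_J$, the paper does not argue via Massey products or by splicing an Eagon--Northcott complex for $I_2(U)$ with a resolution of $\fq^3$. Instead it forward-references Theorem~\ref{thm:generic}, whose proof rests on the observation that the graded specialization of $J$ is \emph{componentwise linear}: this simultaneously yields Golodness (via Herzog--Reiner--Welker) and, through Proposition~\ref{prop:component}, a closed formula for $z^2P^Q_J - 1$. Your proposed direct construction of a minimal resolution of $J = I_2(U) + \fq^3$ would be substantially harder, since the resolution of the sum is not assembled in any simple way from those of the summands.

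On the boundary obstacle you correctly isolate: the paper's formula in case (b) gives $z^2P^Q_J - 1 = (1+z)^{h+1}p(z)$ with $p(-1) = h+1$ when $h \le e-2$ and $p(-1) = -(h+2)$ when $h = e-1$. Hence for $a = h$ and $h \le e-2$ the relevant value is $2^h + (h+1) > 0$, so $\delta = h+1$ exactly and no cancellation occurs. When $h = e-1$ the value $2^h - h - 2$ vanishes only at $(h,e) = (2,3)$; but there $\epsilon - m = c - d \le 0$, so the extra cancellation is harmless, exactly as you anticipated.
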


  \begin{proof} 
The case $s = 2$ of Theorem \ref{thm:generic}(2) shows that $S$ is Golod, $h \le e$ holds, and $h = e$ 
is equivalent to $J = \fq^2$; in addition, it yields an equality
   \[
z^2P^Q_{J} - 1 =  \frac{(1+z)^{e}}{z} \left(1-ez + \frac{(e+h+1)(e-h)}{2}z^2\right) + \frac{(1+z)^{h+1}}{z}(hz - 1) \,.
   \]

To compute $\gran_R(S)$ we feed the above expression into Formula \eqref{eq:relations1}, write $P^R_S$ 
as a rational function, evaluate the order of its pole at $-1$, and refer to \ref{ch:rational}(ii).

(a)  When $h = e$ holds, we get $z^2P^Q_{J} - 1 = (1+z)^e(ez - 1)$, whence $m=e$ and  
   \[
P^R_S = \frac{(1 + z)^{a + 1}(1 - z)^a + (1+z)^{e} (ez - 1)}{z(1+z)^{c-d+e}(1-z)^c} \,.
  \]
If $a \le e - 2$ the highest power of $(1+z)$ that divides the numerator is $a+1$; this verifies the 
order of the pole of $P^R_S$ at $-1$ announced in \eqref{eq:gring1}.  If $a\ge e-1$, then that 
highest power is $e + 1$ when $(a, e) = (2,3)$, and $e$ otherwise; in neither case does $P^R_S$ 
have a pole at $-1$, as $c - d \le 0$ holds.  Now the proof of \eqref{eq:gring1} is complete.

(b) When $h \le e-1$ holds, we have $z^2P^Q_{J} - 1 = (1+z)^{h+1} \cdot p(z)$ with 
  \[
p(z) :=  \frac{(1+z)^{e-h-1}}{z} \left(1-ez + \frac{(e+h+1)(e-h)}{2}z^2\right) + \frac{1}{z}(hz - 1) \,.
   \]
The equalities $p(-1) = h+1$ if $h \le e-2$ and $p(-1) = -(h+2)$ if $h = e - 1$ show that $m = h + 1$
holds in both cases. Therefore \eqref{eq:relations3} takes the form
 \[
P^R_S = 
  \left\{
\begin{alignedat}{3}
\frac{(1-z)^a + (1+z)^{h-a} \cdot p(z)}{z(1+z)^{c-d+e-a-1}(1-z)^c} 
 &\quad\text{if } a  \le h - 1 \,;
    \\
\frac{(1+z)^{a-h}(1-z)^a + p(z)}{z(1+z)^{c-d+e-h-1}(1-z)^c} 
 &\quad\text{if } a \ge h \,.
 \end{alignedat}
 \right.
  \]
Evaluating the numerators of $P^R_S$ at $z = -1$ yields $2^a$ if $a \le h-1$ and $p(-1) \ne 0$ if $a > h$; therefore 
\eqref{eq:gring2} holds when $a \ne h$. When $a = h$ the formula above becomes
  \[
P^R_S=\frac{(1 - z)^h + p(z)}{z(1+z)^{c-d+e-h-1}(1-z)^c}  \,.
  \]
At $z = -1$ the numerator equals $2^h + h + 1$ if $h \le e-2$ and $2^h - h - 2$ if $h = e-1$; this settles
\eqref{eq:gring2} except if $(h, e)= (2,3)$, where $(1 - z)^2 + p(z) = z(z +1)$ yields
  \[
\gran_R(S) = \max\{c - d - 1 , 0\} = 0 = \max\{c-d+e-h-1, 0\} \,.
  \qedhere
   \]
  \end{proof}

Appropriate choices, in that order, of a matrix $U$ and of a ring $R$ in Theorem~\ref{thm:gring}  
provide the setup for proving that the upper bound in \eqref{eq:gran1} is optimal.

  \begin{chunk}
    \label{ch:pfthmoptimal}
  \emph{Proof of Theorem \emph{\ref{thm:optimal}}.}
Let $(P,\fp,k)$ be a $d$-dimensional regular local ring, $e$ an integer satisfying $0 \le e \le d$, 
and $\{t_1,\dots,t_{d-e}\}\sqcup\{u_{1},\dots,u_{e}\}$ a regular system of parameters for~$P$.  
Thus $Q := P/(t_1,\dots,t_{d-e})$ is regular and the canonical map $P \tra Q$ sends 
$\{u_1,\dots,u_e\}$ bijectively onto a minimal set of generators of $\fq := \fp Q$.

If $q > a$ the hypothesis yields $d \ge c \ge q > a \ge 0$, and hence the number $e := d - q + a$ 
satisfies $d - e = q - a > 0$; define residue rings of $P$ by setting 
  \begin{alignat*}{4}
R &:= P / I  &\quad&\text{with}\quad &I &:= &(t_1^2,\dots,t_{q-a}^2) & \, + \, (u_{1}^2,\dots,u_a^2) + (u_{q+1}^4,\dots,u^4_c) \,;
    \\
S &:= P / \wJ &\quad&\text{with}\quad &\wJ &:= &(t_1,\dots,t_{q-a}) & \, + \,(u_1,\dots,u_q)^2 + (u_{1},\dots,u_e)^3 \,.
  \end{alignat*}
If $q \le a$ holds, then the line-up is $d \ge c \ge a \ge q \ge 0$.  Choose $e = d$ and set
  \begin{alignat*}{3}
R &:= P / I  &\quad&\text{with}\quad &I &:= (u_1^2,\dots,u_q^2) +(u_{q+1}^3,\dots,u_{a}^3) +(u_{a+1}^4,\dots,u^4_c) \,;
    \\
S &:= P / \wJ  &\quad&\text{with}\quad & \wJ &:= (u_1,\dots,u_q)^2 + (u_{1},\dots,u_d)^3 \,.
  \end{alignat*}

It is clear that $R$ is c.i\ with $(\edim R,\codim R,\codim \qr R) = (d,c,q)$.  Choose $\varphi \colon R \tra S$ 
to be the homomorphism defined by $I \subseteq \wJ$; in both cases it is easy to see that $\{u_1,\dots,u_a\}$ is 
a $k$-basis of $I/I\cap\fp \wJ$, and this yields $a(\varphi_*) = a$.  The ideal
  \[
\wJ = I_2(U) + \fq^3 \quad\text{with}\quad
U := \begin{bmatrix} u_1 & u_2 & \dots & u_q & 0 \\ 0 & u_1 & \dots & u_{q-1} & u_q\end{bmatrix}
  \]
of $Q$ satisfies $\wJ := J Q$ and $Q/\wJ = S$.  Applying Theorem \ref{thm:gring} with $h = q$ yields
  \[ 
\gran_R(S) = 
\left\{ 
\begin{alignedat}{2}
\max\{ c - d + (d - q + a) - a  - 1, 0\} &= \max\{ c - q  - 1, 0\}  &\text{ if } a & \le q - 1 \,;
    \\
\max\{ c - d + d - q  - 1, 0\} &= \max\{ c - q  - 1, 0\} &\text{ if } a & \ge q \,. 
  \end{alignedat} 
\right.
  \] 

The proof of Theorem \ref{thm:optimal} is complete.  \qed
  \end{chunk}

As another application of Theorem \ref{thm:gring}, we show that the existence of residue rings 
$S$ with $\fn^2 =0$ and $\gran_R(S) = 0$ imposes upper bounds on $\codim R$.
 
\begin{Proposition}
      \label{prop:loewy} 
Let $(R,\fm,k)$ be a c.i.\ local ring, $\wh R \cong P/I$ a minimal Cohen presentation, 
$\fp$ the maximal ideal of $P$, and $L$ a proper ideal of $P$ satisfying $L^*_2 \supseteq \qri I2$.

If $S :=  P/(L + \fp^2)$ has $\gran_R(S) = 0$, then the following inequalities hold:
  \begin{align}
    \label{eq:loewy1}
\codim R - 1 &\le \rank_k L^*_1 +  \rank_k \big(\qri I2/(\qri I2\cap\gri P1L^*_1)\big) \,.
  \\
    \label{eq:loewy2}
\codim R - 1 &\le \rel \qr R \quad\text{if}\quad L \subseteq \fp^2 \,.
  \end{align}
      \end{Proposition}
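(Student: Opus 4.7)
The strategy is to reduce to the setting of case (a) of Theorem \ref{thm:gring}. First, I pass to $\fm$-adic completions; this preserves $\gran_R(S)$ by \ref{ch:basics}(2) and the other invariants by Lemma \ref{lem:grR}, so I may assume $R = P/I$ and $S = P/(L + \fp^2)$ with $P$ complete. The containment $\fp^2 \subseteq L + \fp^2$ forces $\fn^2 = 0$ in $S$ and gives $\edim S = d - \rank_k L^*_1 =: e$, where $d := \edim R$. Lifting a $k$-basis of $L^*_1$ to a subset $\bst \subseteq L$, the set $\bst$ extends to a regular system of parameters of $P$, so $Q := P/P\bst$ is regular of dimension $e$; moreover $P\bst + \fp^2 = L + \fp^2$, whence $S = Q/\fq^2$ for $\fq := \fp/P\bst$.

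Assume $e \ge 1$ (the edge case $e = 0$ gives $S = k$ and \eqref{eq:loewy1} follows from $c \le d = \rank_k L^*_1$). Choose a regular system of parameters $u_1, \dots, u_e$ of $\fq$ and let
\[
U := \begin{pmatrix} u_1 & u_2 & \cdots & u_e & 0 \\ 0 & u_1 & \cdots & u_{e-1} & u_e \end{pmatrix},
\]
whose $2 \times 2$ minors generate $\fq^2$, so $J := I_2(U) + \fq^3 = \fq^2$. Assuming $U$ is adequate for $\bsu$ in the sense of \ref{ch:det}, case (a) of Theorem \ref{thm:gring} applies with $h = e$. The hypothesis $\gran_R(S) = 0$ then forces $c - d + e - a - 1 \le 0$ when $a \le e - 2$, while $a \ge e - 1$ automatically gives $(d - e) + a \ge d - 1 \ge c - 1$ since $c \le d$. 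Either way,
\[
c - 1 \le \rank_k L^*_1 + a.
\]

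It remains to identify $a$ with $\rank_k\bigl(\qri{I}{2}/(\qri{I}{2} \cap \gri{P}{1} L^*_1)\bigr)$. By \ref{ch:varphi}, $a = \rank_k I/(I \cap \fp \wJ)$ with $\wJ = L + \fp^2$, hence $\fp \wJ = \fp L + \fp^3$. The canonical surjection $I \twoheadrightarrow \qri{I}{2} = (I + \fp^3)/\fp^3$ of \eqref{eq:quad3} has kernel $I \cap \fp^3$; a direct application of the modular law (using $\fp^3 \subseteq \fp L + \fp^3$) identifies $\qri{I}{2} \cap \gri{P}{1} L^*_1$ with $(I \cap (\fp L + \fp^3))/(I \cap \fp^3)$. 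The quotient therefore has $k$-rank $\rank_k I/(I \cap (\fp L + \fp^3)) = a$, proving \eqref{eq:loewy1}. For \eqref{eq:loewy2}, the hypothesis $L \subseteq \fp^2$ gives $L^*_1 = 0$, so $\qri{I}{2} \cap \gri{P}{1} L^*_1 = 0$ and the right-hand side reduces to $\rel \qr R$, since $\qr I$ is generated in degree $2$ and therefore $\rel \qr R = \rank_k \qri{I}{2}$.

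The main obstacle is verifying adequacy of $U$, a notion to be introduced in \ref{ch:det}; since case (a) of Theorem \ref{thm:gring} is phrased precisely for $S = Q/\fq^2$, I expect the Hankel-type matrix above to satisfy the definition. If it does not fit directly, the argument can be salvaged by a targeted computation of $P^Q_{\fq^2}$ over the regular ring $Q$, combined with the automatic Golod property of rings with square-zero maximal ideal, to recover the formula in \eqref{eq:gring1} and conclude as above.
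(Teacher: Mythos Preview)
Your proof is correct and follows essentially the same route as the paper's: both reduce to case (a) of Theorem~\ref{thm:gring} by recognizing $S=Q/\fq^2$, then identify the right-hand side of \eqref{eq:loewy1} with $d-e+a$ via the isomorphism $\qri I2/(\qri I2\cap\gri P1L^*_1)\cong I/(I\cap\fp\wJ)$, and split into the cases $a\le e-2$ and $a\ge e-1$. Your worry about adequacy is unfounded: for your Hankel matrix with $s=2$ and $h=e$ one has $\Delta^U_n=\{u_n\}$ for each $n\in[1,e]$, so all three conditions in \ref{ch:det} hold (the third vacuously, since $\bsu\setminus\bsu'=\emptyset$); and you are slightly more careful than the paper in treating the edge case $e=0$.
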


  \begin{proof} 
Choose a subset $\bst$ of $L$ that is mapped bijectively onto some $k$-basis of $L/L\cap\fp^2$.
The ideal $\wJ := P\bst + \fp^2$ satisfies $\wJ = L + \fp^2$, $\wJ^*_1= L^*_1$, and 
  \[
\frac{\qri I2}{\qri I2\cap\gr P_1J^*_1}
\cong \frac{\qri I2+\gr P_1J^*_1}{\gr P_1J^*_1} 
\cong \frac{(I+\fp(P\bst+\fp^2))/\fp^3}{\fp(P\bst+\fp^2)/\fp^3} 
\cong \frac{I+\fp J}{\fp J} 
\cong \frac{I}{I\cap \fp J} \,.
   \]
With notation from \ref{ch:relations}, we get $\rank_k(\qri I2/\qri I2\cap\gr P_1J^*_1) = a$, and hence  
the right-hand side of \eqref{eq:loewy1} equals $d - e + a$.  As $\fq^2 S = 0$, Theorem \ref{thm:gring}(a) 
applies to $R \tra S$, and \eqref{eq:gring1} yields $c-1\le d - e + a$ when $a \le e - 2$.  On the other 
hand, when $a \ge e - 1$ we get $c - 1 \le d - 1 \le d - e + a$ from the relation $c \le d$; see 
\eqref{eq:relations2}.  Now Formula \eqref{eq:loewy1} has been proved.  Formula \eqref{eq:loewy2}  
records the special case $L^*_1 = 0$.
   \end{proof} 

The proposition yields a stronger and sharper version of \cite[Theorem B]{Av:rational}.

  \begin{Corollary}
    \label{cor:quadrics}
If $(R,\fm,k)$ is a local ring with $\cx_R(\fm^2) < \infty$ and $\gran_R(\fm^2) =0$, then $R$ is c.i.~and 
$\codim R \le \rel \qr R + 1$ holds.
  \end{Corollary}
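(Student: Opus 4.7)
The corollary makes two assertions about $R$: that it is a complete intersection, and that $\codim R \le \rel \qr R + 1$. My plan is to establish the c.i.\ property first, and then derive the codimension bound as an immediate consequence of Proposition \ref{prop:loewy}.

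Granting that $R$ is c.i., the bound follows at once.  Take a minimal Cohen presentation $\wh R \cong P/I$ and set $L := I$ in the proposition.  Then $L \subseteq \fp^2$, $L^*_2 = I^*_2 \supseteq \qri{I}{2}$, and $S := P/(L + \fp^2) = P/(I + \fp^2)$ identifies with $\wh{R/\fm^2}$, so $P^R_S = P^R_{R/\fm^2}$.  The short exact sequence $0 \to \fm^2 \to R \to R/\fm^2 \to 0$ yields $P^R_{R/\fm^2}(z) = 1 + z \cdot P^R_{\fm^2}(z)$, hence $\gran_R(R/\fm^2) = \gran_R(\fm^2) = 0$ by hypothesis.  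Proposition \ref{prop:loewy} and inequality \eqref{eq:loewy2} then give $\codim R - 1 \le \rel \qr R$.

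The substantive step is showing that $R$ is c.i.  By Gulliksen's criterion (\ref{ch:ci}(3), (iv)$\Leftrightarrow$(i)), this reduces to verifying $\cx_R(k) < \infty$.  The plan is to exploit the Golod structure on $S := R/\fm^2$: because its maximal ideal has square zero, $S$ is a Golod local ring, yielding
\[
P^{S}_k(z) = \frac{(1+z)^d}{1 - z^2 P^P_{\fp^2}(z)},
\]
where $d = \edim R$ and $P$ is the regular local ring in a minimal Cohen presentation.  A change-of-rings identity---arising from the Golod-homomorphism property of $R \tra R/\fm^2$ established by Levin---then expresses $P^R_k$ as a product of $P^{S}_k$ with a rational factor involving $P^R_S$.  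The hypothesis forces $P^R_S = 1 + z P^R_{\fm^2}$ into the rational form $\tilde p(z)/(1-z)^c$, with $c = \cx_R(\fm^2)$ and $\tilde p(1) \ne 0$; the resulting expression for $P^R_k$ is therefore rational.

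The main obstacle is the pole-cancellation analysis: the polynomial $1 - z^2 P^P_{\fp^2}(z)$ may have roots off $\{\pm 1\}$ (its explicit form being obtainable from a Koszul-type resolution of $\fp^2$ over the regular ring $P$), and one must verify that these extraneous roots are cancelled by zeros of the rational factor coming from $P^R_S$.  The rigidity imposed by the granularity hypothesis $\gran_R(\fm^2) = 0$ is what compels this cancellation.  Once it is in place, $P^R_k$ has all its poles at $\{\pm 1\}$, so $\cx_R(k) < \infty$, and Gulliksen's theorem yields that $R$ is c.i., completing the proof.  Low-dimensional degeneracies (for instance $\edim R \le 1$ or $\fm^2 = 0$) should be checked separately by direct inspection of the Poincar\'e series.
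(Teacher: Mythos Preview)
Your reduction of the inequality $\codim R \le \rel\qr R + 1$ to Proposition~\ref{prop:loewy} with $L = I$ is correct and is exactly what the paper does; the identification $S = P/\fp^2 \cong \wh{R/\fm^2}$ and the syzygy relation $\gran_R(R/\fm^2) = \gran_R(\fm^2)$ are both fine.

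The gap is in your argument that $R$ is c.i.  Granting that $R \tra R/\fm^2$ is Golod (which is itself a nontrivial fact requiring a precise citation), you obtain
\[
P^R_k(z) \;=\; \frac{1 - z^2 P^R_{\fm^2}(z)}{1 - dz}\,,
\]
and you need the apparent pole at $z = 1/d$ to cancel.  But the hypothesis $\gran_R(\fm^2) = 0$ constrains only the pole of $P^R_{\fm^2}$ at $z = -1$; it says nothing about the \emph{value} $P^R_{\fm^2}(1/d)$, which is what governs cancellation at $1/d$.  Nothing rules out $P^R_k$ genuinely having a pole at $1/d$: the series $1/(1-dz)$ has nonnegative integer coefficients, so positivity alone does not help.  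In effect, showing this cancellation is equivalent to showing $\cx_R(k) < \infty$, which is the assertion you are trying to prove---so the argument is circular as it stands.

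The paper avoids this entirely: it invokes \cite[Theorem~4 and Proposition~2]{Av:extremal} to get $\cx_R(k) = \cx_R(R/\fm^2) = \cx_R(\fm^2) < \infty$ directly, and then Gulliksen's criterion \ref{ch:ci}(3) gives that $R$ is c.i.  That extremality result is the missing ingredient; once you cite it, the Golod change-of-rings machinery and the pole analysis become unnecessary.
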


  \begin{proof}
Put $L := R/\fm^2$.  From \ref{ch:basics}(1) and \cite[Theorem 4 and Proposition 2]{Av:extremal}, one gets 
$\cx_R(k) = \cx_R(L) = \cx_R(\fm^2) < \infty$; thus $R$ is c.i.\ and $\gran_R(L) = \gran_R(\fm^2) = 0$ holds; 
see \ref{ch:ci}(3) and \ref{ch:basics}(1). Now Formula \eqref{eq:loewy2} yields $\codim R \le \rel \qr R + 1$.
  \end{proof}

 \section{Eventually polynomial Betti sequences}
\label{S:Eventually polynomial Betti sequences}

Recall that $(R,\fm,k)$ denotes a local ring and $M$ a finite $R$-module.  

We say that the Betti sequence $(\beta^R_i(M))$ is \emph{eventually polynomial} if there exists 
$\beta^{R,M}\in\QQ[z]$ such that $\beta^R_i(M)=\beta^{R,M}(i)$ holds for $i\gg0$.  In this section 
we look for conditions on the structure of $R$ that imply or follow from the property that the Betti 
sequence of \emph{every} finite $R$-module is eventually polynomial.  

The next result yields the homogeneous case in Theorem \ref{thm:low-I} in the introduction; 
it answers, in the positive, a question raised at the end of the introduction of~\cite{Av:rational}.

  \begin{Theorem}
    \label{thm:strict}
If $A$ is a standard graded $k$-algebra and $R$ is its localization at the maximal ideal $(A_1)$, 
then the following conditions are equivalent.
  \begin{enumerate}[\quad\rm(i)]
 \item
The Betti sequence of $R/\fm^2$ is eventually polynomial. 
  \item
The Betti sequence of each finite $R$-module is eventually polynomial. 
 \item
The ring $R$ is c.i.\ and satisfies $\codim R \le \codim \qr R + 1$.
  \item 
The graded algebra $\gr R$ is c.i.\ and has at most one non-quadratic relation.
  \end{enumerate}
   \end{Theorem}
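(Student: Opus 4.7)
My plan is to run the cycle (iv) $\iff$ (iii) $\implies$ (ii) $\implies$ (i) $\implies$ (iii). The first three steps are essentially formal; the substance lies in (i) $\implies$ (iii), which will consist of Corollary \ref{cor:quadrics} combined with a homogeneity upgrade from $\rel \qr R$ to $\codim \qr R$.

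For (iii) $\iff$ (iv): since $R = A_{(A_1)}$, one has $\gr R \cong A$ canonically, so $R$ is c.i.\ if and only if $A$ is. In that case a minimal homogeneous presentation takes the form $A \cong k[x_1,\dots,x_d]/(f_1,\dots,f_c)$, with $f_1,\dots,f_c$ a regular sequence of homogeneous forms of degrees $d_1 \le \dots \le d_c$, all at least $2$, so that $\codim R = c$. The quadratic ideal $\qr I$ is then minimally generated by the sub-sequence $\{f_i : d_i = 2\}$, which is itself a regular sequence; hence $\codim \qr R = \rel \qr R = \#\{i : d_i = 2\}$. Therefore (iii) is equivalent to at most one of the $d_i$ exceeding $2$, which is exactly (iv).

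The implication (iii) $\implies$ (ii) is immediate from Theorem \ref{thm:gran}: under (iii) one has $\gran_R(M) \le \max\{\codim R - \codim \qr R - 1, 0\} = 0$ for every finite $M$, and this, together with $\cx_R(M) < \infty$ (automatic over a c.i.\ ring, see \ref{ch:ci}(3)), is one of the reformulations of ``the Betti sequence of $M$ is eventually polynomial'' listed in \ref{ch:rational}. Specializing (ii) to $M = R/\fm^2$ yields (i).

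The main step is (i) $\implies$ (iii). Condition (i) gives $\cx_R(R/\fm^2) < \infty$ and $\gran_R(R/\fm^2) = 0$. Since $\fm^2$ is a first syzygy of $R/\fm^2$, \ref{ch:basics}(1) transports both of these to $\fm^2$, so $\cx_R(\fm^2) < \infty$ and $\gran_R(\fm^2) = 0$. Corollary \ref{cor:quadrics} now yields that $R$ is c.i.\ and $\codim R \le \rel \qr R + 1$, and the identity $\rel \qr R = \codim \qr R$ established in the (iii) $\iff$ (iv) step upgrades this to (iii). The only obstacle, minor but essential, is this last identity; without homogeneity, Corollary \ref{cor:quadrics} by itself delivers only the weaker inequality $\codim R \le \rel \qr R + 1$, which is strictly weaker than (iii) over general local rings.
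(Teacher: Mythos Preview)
Your proof is correct and follows essentially the same approach as the paper's: both use Corollary~\ref{cor:quadrics} for the hard implication out of~(i), Theorem~\ref{thm:gran} for (iii)$\implies$(ii), and the homogeneous-case identity $\rel\qr R=\codim\qr R$ (valid once $R$ is c.i., since then $A\cong\gr R$ is a graded complete intersection and the quadratic minimal generators form a regular sequence). The only difference is organizational---the paper runs the cycle as (i)$\implies$(iv)$\implies$(iii)$\implies$(ii)$\implies$(i), whereas you prove (iii)$\iff$(iv) first and then close the loop via (i)$\implies$(iii); the content is the same.
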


  \begin{proof}
Let $\pi \colon \Sym_k(A_1) \tra A$ be the canonical map of graded $k$-algebras.  Localizing 
$\pi$ at the maximal ideal $(A_1)$ yields a minimal regular presentation $R \cong P/I$ with 
$I = \Ker(\pi)P$ and isomorphisms $\gr R \cong A \cong \gr P/I^*$ of graded \text{$k$-algebras};
cf.~\ref{ch:sym}.  They induces isomorphisms $\qr R \cong \gr P/I^*_2$ of graded algebras and  
$I/\fp I \cong I^*/ \gri P1I^*$ of  $k$-vector spaces, where $\fp$ is the maximal ideal of $P$.  

(i)$\implies$(iv).
In view of Corollary \ref{cor:quadrics}, $R$ is c.i.\ with $\codim R - \rel \qr R \le 1$.  Choose 
$f_1,\dots, f_r \in I$ such that $\{f_1^*,\dots, f_r^*\}$ minimally generates $I^*$ and 
$\{f_1^*,\dots, f_b^*\}$ is a $k$-basis of $I^*_2$.  From $I/\fp I \cong I^*/ \gri P1I^*$ we get 
$r = \codim R$ and $I = (f_1,\dots, f_r)$, and hence $\{f_1,\dots, f_r\}$ is $P$-regular; therefore 
$\{f^*_1,\dots, f^*_r\}$ is $\gr P$-regular; see \ref{ch:VV}(1).  From $\qr R \cong \gr P/I^*_2$ we 
get $\rel \qr R = \rank_k I^*_2 = b$, whence $r - b \le 1$, as desired.

(iv)$\implies$(iii).
Choose $f_1,\dots, f_c \in I$ such that $\{f^*_1,\dots, f^*_c\}$ is $\gr P$-regular, generates $I^*$, and 
$\deg(f^*_j) = 2$ for $1 \le j \le c-1$.  As $R$ is isomorphic to the localization of $\gr R$ at $(P^*)$ 
(due to $A \cong \gr P/I^*$), the image of $\{f^*_1,\dots, f^*_c\}$ in $P$ is a regular set that generates 
$I$, and $\{f^*_1,\dots, f^*_{c-1} \}$ is $k$-independent in $(I_2 + (P_1)^2) / (P_1)^3 \cong I^*_2$.

(iii)$\implies$(ii).
This implication follows from Theorem \ref{thm:gran}.

(ii)$\implies$(i).
This implication is a tautology.
   \end{proof}

The part of Theorem \ref{thm:low-I} concerning c.i.\ rings of low codimension comes from

 \begin{Theorem}
     \label{thm:low}
When $(R,\fm,k)$ is a local ring whose cyclic modules $S$ with $\fm^j S=0$ for some  
$j \ge 1$ have eventually polynomial Betti sequences, then $R$ is c.i., and
  \begin{equation}
   \label{eq:low1}
\codim R - \codim\qr R \le\max\{\codim R - i, 1\} 
  \end{equation} 
holds for $i \le 2$ if $j=2$, and also for $i = 3$ if $j=3$ and $k$ is algebraically~closed.
 \end{Theorem}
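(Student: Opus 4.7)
Set $c := \codim R$, $q := \codim \qr R$, $d := \edim R$; fix a minimal Cohen presentation $\wh R \cong P/I$ with maximal ideal $\fp \subset P$, and write $B := \gr P$. My plan is first to deduce that $R$ is a complete intersection, and then to establish the two inequalities using Proposition \ref{prop:loewy} (for $j = 2$) and Theorem \ref{thm:gring} (for $j = 3$) applied to suitable cyclic residue rings of $R$.

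For the c.i.\ conclusion, observe that $k = R/\fm$ is itself a cyclic $R$-module annihilated by $\fm$ (and hence by $\fm^j$), so the hypothesis makes its Betti sequence eventually polynomial; by \ref{ch:rational} this forces $\cx_R(k) < \infty$, and $R$ is c.i.\ by \ref{ch:ci}(3). For the case $j = 2$, I apply Proposition \ref{prop:loewy} to $S := R/(\fm^2 + (x))$, where $x \in \fm \setminus \fm^2$ is arbitrary with lift $y \in \fp \setminus \fp^2$: the ideal $L := I + Py$ satisfies $L^*_1 = k \cdot y^*$ and $L^*_2 \supseteq \qri I 2$, and since $\fm^2 S = 0$ the hypothesis gives $\gran_R(S) = 0$, so \eqref{eq:loewy1} reduces to
\[
\rank_k \bigl(\qri I 2 / (\qri I 2 \cap B_1 \cdot y^*)\bigr) \ge c - 2.
\]
If $q \le 1$, then either $q = 0$ and $\qr I = 0$ (as $B$ is a domain), which with \eqref{eq:loewy2} forces $c \le 1$, or $q = 1$ and $\qr I$ sits inside a principal height-one prime $(f) \subset B$ (as $B$ is a UFD). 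A brief case analysis on $\deg f \in \{1,2,\ge 3\}$---with the choice $y^* = f$ in the linear case---combined with the display and \eqref{eq:loewy2} yields $c \le 2$ in every subcase. Thus $q \ge \min(c - 1, 2)$ whenever $c \ge 2$, giving \eqref{eq:low1} for all $i \le 2$.

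For $j = 3$ with $k$ algebraically closed, the only new assertion is $q \ge 3$ when $c \ge 4$. Arguing by contradiction, I assume $q = 2$ and pick a height-$2$ homogeneous prime $\fQ$ of $B$ containing $\qr I$. Algebraic closure of $k$ lets me exploit the geometry of $\Proj(B/\qr I)$: after a linear change of coordinates I select linear forms inside a component of $\sqrt{\qr I}$ and use them to build a regular local quotient $(Q, \fq, k)$ of $\wh P$ with $\edim Q = e$ together with a $2 \times 3$ matrix $U$ over $Q$, adequate for a regular system of parameters on $Q$, whose $2 \times 2$ minors provide quadratic relations of $R$ modulo $\fp^3$. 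Then $S := Q/(I_2(U) + \fq^3)$ is a cyclic residue ring of $R$ with $\fm^3 S = 0$, covered by Theorem \ref{thm:gring}(b) with $h = 2$; arranging the construction so that $e = d$ (keeping $\edim S = \edim R$), Formula \eqref{eq:gring2} returns $\gran_R(S) \ge c - 3 \ge 1$ regardless of where $a(\varphi)$ sits relative to $h$, contradicting the hypothesis $\gran_R(S) = 0$.

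The principal difficulty lies in the $j = 3$ construction: one must produce the matrix $U$ so that $S = Q/(I_2(U) + \fq^3)$ is genuinely a residue ring of the given $R$, adequate in the sense of Theorem \ref{thm:gring}, with $e = d$ and with $a(\varphi)$ under control, so that Formula \eqref{eq:gring2} yields a strictly positive value. Algebraic closure of $k$ is used precisely to extract the required linear forms from inside the radical of $\qr I$, a step that may fail over non-closed fields.
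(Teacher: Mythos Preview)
Your argument for the c.i.\ conclusion and for $j=2$ matches the paper's approach: Corollary~\ref{cor:quadrics} handles $i \le 1$, and for $i = 2$ one observes that $q=1$ forces $\qr I \subseteq (f)$ with $\deg f \le 2$ (the case $\deg f \ge 3$ is vacuous, as $\qr I$ is generated in degree~$2$) and feeds this into \eqref{eq:loewy1} with $L$ a lift of $f$.

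For $j = 3$ there is a real gap. You correctly choose a height-$2$ homogeneous prime $\fQ \supseteq \qr I$ and aim for Theorem~\ref{thm:gring}(b) with a $2\times 3$ matrix $U$ and $e = d$, but you give no reason such a $U$ should exist. The phrase ``select linear forms inside a component of $\sqrt{\qr I}$ and use them to build a regular local quotient $Q$'' conflicts with your own requirement $e = d$ (which forces $Q = P$), and in any event $\fQ$ may contain \emph{no} linear forms. The missing ingredient is the del~Pezzo--Swinnerton-Dyer classification of height-$2$ homogeneous primes of small degree over an algebraically closed field (recorded in \ref{ch:delpezzo}): because $\qr I$ contains a regular sequence of two quadrics one has $e(\gr P/\fQ) \le 4$, and the argument splits on this value. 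If $e(\gr P/\fQ) \ne 3$ then $\fQ = (g_1^*, g_2^*)$ with $\deg g_i^* \le 2$, and a short case analysis using \eqref{eq:loewy1}---not Theorem~\ref{thm:gring}---already forces $c \le 3$. Only when $e(\gr P/\fQ) = 3$ is $\fQ$ the ideal of $2\times 2$ minors of one of the three explicit $2\times 3$ matrices $U^*$ listed in \eqref{eq:delpezzo2}; lifting $U^*$ entrywise to $P$ produces a matrix $U$ adequate for a regular system of parameters, and $\qri I2 \subseteq \fQ_2$ then guarantees $I \subseteq I_2(U) + \fp^3$, after which Theorem~\ref{thm:gring}(b) applies with $e=d$ and $h=2$. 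Without this classification you have neither a source for the matrix $U$ nor any treatment of the complete-intersection shapes of~$\fQ$.
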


This theorem is proved in \ref{ch:pf4.2}.  The argument draws upon a classical description of the 
homogeneous prime ideals of codimension two and minimal multiplicity in polynomial rings over 
algebraically closed fields; Huneke, Mantero, McCullough, and Seceleanu \cite{HMMS1} used it
to bound the projective dimensions of those ideals.  We review the relevant parts, using notation 
that will facilitate the references in \ref{ch:pf4.2}.  

 \begin{chunk}
     \label{ch:delpezzo}
Let $k$ be an algebraically closed field, $\gr P$ a polynomial ring over $k$ with variables $\{u^*_1,\dots,u^*_d\}$ 
of degree one, and $D$ a homogeneous prime ideal of $\gr P$.  The ideal $D$ is said to be \emph{degenerate} 
if $D_1 \ne0$, and \emph{non-degenerate} otherwise; in the latter case, a well known inequality involves the 
multiplicity of $\gr P/D$
(see \cite[Proposition 0]{EH}):
  \begin{equation}
    \label{eq:delpezzo1}
e(\gr P/D)\ge\height D+1 \,.
   \end{equation}

Homogeneous prime ideals of height two admit explicit descriptions, possibly after some change of variables. The 
ideal $D$ is degenerate if and only if $D=(u^*_1,g^*_2)$ with $g^*_2$ an irreducible form in $k[u^*_2,\dots,u^*_d]$; 
in this case, $e(\gr P/D)=\deg(g^*_2)$.  

Non-degenerate $D$ belong to one of two types.  If $e(\gr P/D) = 3$, then $D$ is the ideal 
generated by the $2\times2$ minors of one of the matrices $U^*$, displayed below:
 \begin{equation}
    \label{eq:delpezzo2}
\begin{bmatrix} u^*_1 & u^*_2 & u^*_3 \\ u^*_4 & u^*_1 & u^*_2\end{bmatrix},
\ \ \text{or}\ \ 
\begin{bmatrix} u^*_1 & u^*_2 & u^*_3 \\ u^*_4 & u^*_5 & u^*_2\end{bmatrix},
\ \ \text{or}\ \ 
\begin{bmatrix} u^*_1 & u^*_2 & u^*_3 \\ u^*_4 & u^*_5 & u^*_6\end{bmatrix}.
   \end{equation}
If $e(\gr P/D) \ne 3$, then $D=(g^*_1,g^*_2)$ for some $\gr P$-regular set $\{g^*_1,g^*_2\}$ of forms and 
$e(\gr P/D)=\deg(g^*_1)\deg(g^*_2)$.  This classification was obtained in \cite{X} and \cite[Theorem 3]{SD} 
(see also \cite[Theorem 1]{EH}); it is described as above in \cite[p.~63]{En}.
   \end{chunk}
  
  \begin{chunk}\emph{Proof of Theorem \emph{\ref{thm:low}}.}
    \label{ch:pf4.2}
The Betti sequence of $k$ is eventually polynomial, by assumption, and therefore $R$ is c.i.; see \ref{ch:ci}(3).  
Replacing $R$ with $\wh R$ does not change the hypothesis of the theorem (as both rings have the same 
modules of prescribed Loewy length), nor its conclusion (see \eqref{eq:grR3}).  Thus we may assume 
$R = P/I$ with $(P,\fp,k)$ a regular local and $I$ generated by a regular sequence in $\fp^2$. 

Put $c := \codim R$ and $q := \codim \qr R$, and hence $q = \height \qr I$; see \eqref{eq:grR1}.  
We show by (a very short!) induction on $i$ that if certain cyclic $R$-modules have eventually polynomial 
Betti sequences, then $c \ge i +1$ implies $q \ge i$ for $0 \le i \le 3$. 

There is nothing to prove when $i = 0$.  Suppose that the cyclic $R$-modules $S$ with $\fm^2 S = 0$ have 
$\gran_R(S) = 0$.  If $i = 1$, then from Corollary \ref{cor:quadrics} we get $\rel \qr R \ge c-1 \ge 1$, whence 
$\qr I \ne 0$, and hence $q \ge 1$.  When $i = 2$, the claim is that $\qr I \ne 0$ and $c \ge 3$ implies $q \ge 2$.  
Indeed, $q = 1$ means that $\qr I$ is contained in $\gr Pg^*$ for some $g \in \fp$ with $\deg(g^*) = 1$ or 
$\deg(g^*) = 2$, and then Formula \eqref{eq:loewy1} applied with $L := Pg$ yields $2 \le c-1\le 1$, which is absurd.  

Now assume that $k$ is algebraically closed and the cyclic $R$-modules $S$ annihilated by $\fm^3$ have 
$\gran_R(S) = 0$.  We claim that $\qr I \ne 0$ and $c \ge 4$ implies $q \ge 3$.  For the sake of contradiction, 
suppose $q = 2$; then $\gr P$ has prime ideals of height $2$ that contain $I$, and they are homogeneous;  let 
$D$ be one of them.  As $k$ is infinite, there exist $f_1,f_2\in I$ such that $\{\qri f1,\qri f2\}$ is $\gr P$-regular.  The 
surjective ring homomorphisms $\gr P/(\qri f1,\qri f2)\tra \gr P/\qr I\tra \gr P/D$ yield inequalities of multiplicities, to wit
  \[
4=e(\gr P/(\qri f1,\qri f2))\ge e(\gr P/\qr I)\ge e(\gr P/D)\ge1\,.
  \]
We rule out every admissible value of $e(\gr P/D)$ by using the classification in \ref{ch:delpezzo}.

When $e(\gr P/D) \ne 3$, one has $D = (g^*_1,g^*_2)$ for some regular set $\{g^*_1,g^*_2\}$ 
of forms with $n_i := \deg(g^*_i)$ satisfying $1 \le n_1 \le n_2 \le 2$.  We prove that the 
existence of such a set implies $c \le 3$, which is ruled out by our hypothesis.  Indeed, 
$D$ equals $L^*$ for $L := (g_1,g_2) \subset P$; see \ref{ch:VV}(1).  If $n_2 = 1$, then 
$\gri P1 L^*_1 = L^*_2 \supseteq \qri I2$ holds and \eqref{eq:loewy1} yields $c - 1 \le 2+0$. If 
$n_1 < n_2$, then we have $L^*_1=kg^*_1$ and $L^*_2 = \gr P_1 g^*_1 \oplus kg^*_2$; as $\qri I2$ 
contains a $\gr P$-regular set of two elements, we get $\qri I2 \not\subseteq \gr P_1 g^*_1$, whence 
$\gr P_1 g^*_1 + \qri I2 = L^*_2$, and hence $\qri I2/(\qri I2\cap \gr P_1 L^*_1) \cong kg^*_2$; now 
\eqref{eq:loewy1} gives $c - 1 \le 1 + 1 $.  Finally, $n_1=2$ implies $c - 1 \le 0+2$, again by 
\eqref{eq:loewy1}.

If $e(\gr P/D)=3$, then $D = (y_1,y_2,y_3)$, where the $y_j$s are the $2\times2$ minors of a 
$2\times3$ matrix $U^*$ in Formula \eqref{eq:delpezzo2} and $u^*_i$ is the initial form of $u_i$, 
where $\bsu := \{u_1,\dots,u_d\}$ of $\fp$ is a minimal generating set .  Let $U$ be the matrix 
obtained from $U^*$ by replacing each $u^*_i$ with $u_i$, and let $g_j$ be the minor of $U$ 
that corresponds to $y_j$.  As $g^*_j = y_j$ holds for $j = 1,2,3$, the ideal $L := (g_1,g_2,g_3)$ 
of $P$ has $L^*_1 = 0$ and $L^*_2 = D_2$; also, $I$ lies in $\wJ := L + \fp^3$, as seen from 
the relations
  \[
\frac{I+\fp^3}{\fp^3} = \qri I2 \subseteq D_2 = \qri L2 = \frac{L+\fp^3}{\fp^3} = \frac{\wJ}{\fp^3} \,.
  \]
Put $Q := P$ and $S := P/ \wJ$, and let $d,c,a,e$ be the numbers assigned in \ref{ch:relations} to 
the canonical map $R \tra S$.  The matrix $U$ is adequate for $\bsu$ (cf.~\ref{ch:det}), so Theorem 
\ref{thm:gring} applies.  Here we have $e = d$ and $h = 2$, and therefore the granularity of $S$ is
given by Formula \eqref{eq:gring2}.  When $\gran_R(S) = 0$ this formula yields $c - a - 1 \le 0$ 
if $a \le 1$, and $c - 3 \le 0$ if $a \ge 2$.  We end up with $4 \le c \le 3$ and therefore $q \ge 3$ holds.
  \qed
\end{chunk}

  \section{A family of Golod homomorphisms}
    \label{S:A family of Golod homomorphisms}

This section does not rely on material in earlier parts of the paper.  The goal is Theorem 
\ref{thm:generic}, which contains results crucial to the proofs in Sections \ref{S:The upper 
bound is optimal} and \ref{S:Eventually polynomial Betti sequences}.

 \begin{chunk}
     \label{ch:gradedBetti}
In this section $k$ denotes a field, $\bsx$ a finite set of indeterminates of degree one, and $A$ a $k$ algebra 
isomorphic to $k[\bsx]/I$, where $I$ is a homogeneous ideal in $(\bsx)^2$.  Furthermore, $N$ denotes a graded 
$A$-module; we set $\inf N := \inf\{ j \in \ZZ \mid N_j \ne0 \}$ if $N \ne 0$ and $\inf 0 = \infty$; abusing notation, 
we write $k$ for $A/(A_1)$.  

A blanket hypothesis is that all $A$-modules are graded and finitely generated, 
their submodules are homogeneous, and their homomorphisms preserve degrees.

Natural gradings $\Tor^A_i(N,k)=\bigoplus_{j\in\ZZ}\Tor^A_i(N,k)_j$ are inherited from  
resolutions by free graded $A$-modules.  The \emph{graded Betti numbers}
$\beta^A_{i,j}(N):=\rank_k\Tor^A_i(N,k)_j$ satisfy the conditions
$\beta^A_{i,j}(N)=0$  for $i \notin [0,\pd_AN]$, $j < i + \inf N$, and $j\gg i$.

We write $P^A_N(y,z)$, or $P^A_N$, for the \emph{graded Poincar\'e series} of $N$, defined to be
  \[
P^A_N(y,z) := \sum_{i\ges0}\sum_{j\in\ZZ} \beta^A_{i,j}(N) y^jz^i \in\ZZ[y^{\pm1}][[z]]\,.
  \]

  \begin{enumerate}[\rm(1)]
    \item
Localization at $(A_1)$, denoted here by $? \leadsto ?^{\ell}$, is a faithfully exact functor from 
graded $A$-modules to $A^{\ell}$-modules; it preserves freeness and minimality, whence
  \[
P^{A^{\ell}}_{N^{\ell}}(z) = P^A_N(1,z) \,.
  \]
   \end{enumerate}

Three relevant properties of graded algebras are defined in terms of $P^A_k$.
  \begin{enumerate}[\rm(1)]
    \item[\rm(2)]
The algebra $A$ is said to be \emph{Koszul} if it satisfies the condition
   \[
H_{A}(-yz) \cdot P^A_k(y,z) = 1 \,;
   \]
see also \ref{ch:linear}(1).  In particular, $k[\bsx]$ is Koszul and $P^{k[\bsx]}_k(y,z) = (1+yz)^{|\bsx|}$.
    \item[\rm(3)]
The algebra $A$ is said to be a \emph{graded complete intersection} if $I = k[\bsx]\,\bsg$ for some 
$k[\bsx]$-regular set of forms, $\bsg$; by a graded version of \ref{ch:ci}(3), this is equivalent to 
   \[
 P^A_k(y,z) \cdot \prod_{g \in \bsg} (1 - y^{\deg(g)}z^2)  = (1+yz)^{|\bsx|} \,.
   \]
    \item[\rm(4)]
The algebra $A$ is said to be \emph{Golod} if it satisfies the condition
   \[
P^A_k(y,z) \cdot \left(1 - z^2P^{k[\bsx]}_{I}(y,z) \right) = (1+yz)^{|\bsx|}  \,.
   \]
(Note:  This equality differs from that in \cite{HRW}, where Formula (2.2) is incorrect.)
   \end{enumerate}

In view of \ref{ch:gradedBetti}(1), the algebra $A$ is c.i., respectively, Golod if and only if the local ring 
$A^{\ell}$ has the corresponding property; cf.~\ref{ch:ci}, respectively, \ref{ch:relations}. 
   \end{chunk}

The focus in this section is on specific properties of polynomial ideals.

    \begin{chunk}
   \label{ch:linear}
Let $B := k[\bsx]$ be a polynomial ring, $I$ an ideal of $B$, and $t$ an integer.

We say that $I$ is $t$-\emph{linear} (or, $I$ \emph{has an $t$-linear resolution}) if $\beta^B_{i,j}(I) = 0$ 
holds for $j\ne i+t$; for instance, $(\bsx)$ is $1$-linear.  The ideal $I$ is \emph{linear} if it is $t$-linear 
for some $t$; when $I$ is linear, it is $(\inf I)$-linear if $I \ne 0$, and $t$-linear for each $t \in \ZZ$ if 
$I = 0$.  

Following Herzog and Hibi 
\cite{HH}, we say $I$ is \emph{componentwise linear} if the ideal $\lin Ij := BI_j$ is $j$-linear for each 
$j \in \ZZ$.  We list a few relevant properties.
   \begin{enumerate}[\rm(1)]
   \item
If $I$ is $t$-linear, then the following equality holds: 
   \begin{align*}
(-z)^t P^B_I(y,z) & = (1 + yz)^{|\bsx|} H_I(-yz) \,.
   \end{align*}
   \item
If $I$ is linear, then it is componentwise linear.
   \item
If $I$ is componentwise linear, then the following equality holds:
  \[
\beta^B_{i,j}(I) = \sum_{h \in \ZZ} \beta_{i,h}(\lin I{j} ) - \sum_{h \in \ZZ} \beta_{i,h}(B_1 \lin I{j-1})
\quad\text{for}\quad  i, j\in \ZZ \,.
  \]
    \end{enumerate}

Part (1) can be read off Polishchuk and Positselski \cite[Proof of Proposition~2.2]{PP}.  
(2) is well known; e.g., \cite[1.1]{PP} or \cite[Lemma 1]{HRW}.  Part (3) is proved in \cite[1.3]{HH}.  
     \end{chunk}

We reduce computation of Poincar\'e series of componentwise linear ideals to a 
potentially simpler task -- computing Hilbert series of finitely many residue rings.

 \begin{Proposition}
     \label{prop:component}
Let $B := k[\bsx]$ be a polynomial ring, $I$ an ideal of $B$, and put
\begin{equation}
 \label{eq:component1}
e := |\bsx|  \,, \ 
n_j := \rank_k I_j \,, \  
\JJ := \{ j \in \ZZ \mid I_{j} \ne B_1 I_{j - 1} \} \,, \ 
\text{and} \, \ 
t := \max \JJ \,.
  \end{equation}

If $I$ is componentwise linear, then $P^B_{I}(z,y)$ is given by the following formula:
 \begin{equation}
 \begin{aligned}
  \label{eq:component2}
P^B_I (y, z)
& = (1 + yz)^{e} \sum_{j \in \JJ}  (-z)^{-j} \big(H_{B/\lin I{j-1}}(-yz) - H_{B/\lin I{j}} (-yz) \big)
  \\
&  {}\phantom{ = \ }
 + (1 + yz)^{e} \sum_{j \in \JJ}  (-z)^{-j} n_{j-1} (-yz)^{j-1} \,.
  \end{aligned}
  \end{equation}
   \end{Proposition}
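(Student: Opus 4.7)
The plan is to use \ref{ch:linear}(3) to express $P^B_I$ via the Poincar\'e series of $\lin Ij$ and $B_1\lin I{j-1}$, convert these into Hilbert series via \ref{ch:linear}(1), and then simplify. The componentwise-linear hypothesis makes $\lin Ij$ a $j$-linear ideal for every $j$. A key preliminary step, which I expect to be the main obstacle, is showing that $B_1 \lin I{j-1}$ is also $j$-linear: this follows from the short exact sequence
\[
0 \to B_1 \lin I{j-1} \to \lin I{j-1} \to \lin I{j-1}/B_1 \lin I{j-1} \to 0,
\]
where the quotient is a direct sum of copies of $k(-(j-1))$ (since $\lin I{j-1}$ is generated in degree $j-1$); comparing $\Tor$-supports in the long exact sequence, using $(j-1)$-linearity of $\lin I{j-1}$ and concentration of $\Tor^B_i(k,k)$ in degree $i$, forces $B_1\lin I{j-1}$ to be $j$-linear.

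With both ideals $j$-linear, \ref{ch:linear}(3) (each side supported in a single graded strand) yields, after multiplying by $y^{i+j}z^i$ and summing over $i,j$, the identity
\[
P^B_I(y,z) = \sum_{j \in \ZZ}\bigl(P^B_{\lin Ij}(y,z) - P^B_{B_1 \lin I{j-1}}(y,z)\bigr).
\]
Applying \ref{ch:linear}(1) to convert each Poincar\'e series to a Hilbert series then produces
\[
P^B_I(y,z) = (1+yz)^e \sum_{j \in \ZZ} (-z)^{-j} \bigl(H_{\lin Ij}(-yz) - H_{B_1 \lin I{j-1}}(-yz)\bigr).
\]

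Finally I split the Hilbert series difference (with $u := -yz$) as $\bigl(H_{\lin Ij}(u) - H_{\lin I{j-1}}(u)\bigr) + \bigl(H_{\lin I{j-1}}(u) - H_{B_1 \lin I{j-1}}(u)\bigr)$; the first bracket rewrites as $H_{B/\lin I{j-1}}(u) - H_{B/\lin Ij}(u)$, and a degree-by-degree computation shows that $\lin I{j-1}$ and $B_1\lin I{j-1}$ agree in every degree $d \geq j$ (both equal $B_{d-j+1}I_{j-1}$) and differ only in degree $j-1$, where their ranks are $n_{j-1}$ and $0$, so the second bracket equals $n_{j-1}u^{j-1}$. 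To restrict the sum to $\JJ$, observe that for $j \notin \JJ$ the relation $I_j = B_1 I_{j-1}$ gives $\lin Ij = B_1\lin I{j-1}$, so $H_{\lin Ij}(u) = H_{B_1\lin I{j-1}}(u)$, making the first bracket equal $-n_{j-1}u^{j-1}$ and exactly canceling the $n_{j-1}u^{j-1}$ contribution from the second bracket; assembling these ingredients yields \eqref{eq:component2}.
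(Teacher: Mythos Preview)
Your argument is correct and follows essentially the same route as the paper: invoke \ref{ch:linear}(3), use $j$-linearity of both $\lin I{j}$ and $B_1\lin I{j-1}$ together with \ref{ch:linear}(1) to pass to Hilbert series, and simplify. One small point: your long-exact-sequence argument for the $j$-linearity of $B_1\lin I{j-1}$, as written, only shows that $\Tor_i^B(B_1\lin I{j-1},k)$ is supported in degrees $\{i+j-1,\, i+j\}$; to exclude degree $i+j-1$ you should also note that $B_1\lin I{j-1}$ is generated in degree $j$, so $\beta^B_{i,d}(B_1\lin I{j-1}) = 0$ for $d < i+j$. The paper obtains this linearity more directly via \ref{ch:linear}(2): since $\lin I{j-1}$ is linear it is componentwise linear, hence $\lin{(\lin I{j-1})}{j} = B_1\lin I{j-1}$ is $j$-linear. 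The paper also restricts to $j \in \JJ$ at the outset rather than at the end, but that is immaterial.
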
 

   \begin{proof}
Both $\lin I{j}$ and $B_1 \lin I{j-1}$ are $j$-linear, by the assumption on $I$ and by 
\ref{ch:linear}(2), respectively; thus, the formula in \ref{ch:linear}(3) is shorthand for
a family of equalities:
 \[
\beta^B_{i,j}(I) = 
  \begin{cases}
\beta^B_{i,i+j} (\lin I{j}) - \beta^B_{i,i+j}(B_1 \lin I{j-1}) & \text{for } j \in \JJ \,;
  \\
0 & \text{otherwise} \,. 
  \end{cases}
  \]

Multiply the $i$th equality by $y^{i+j}z^{i}$ and sum up over $i \in \ZZ$; the result is  
  \[
\sum_{i \in \ZZ} \beta^B_{i,j}(I) y^{i+j} z^i = 
  \begin{cases}
P^B_{\lin I{j}} - P^B_{B_1\lin I{j-1}} &\text{for } j \in \JJ \,;
   \\
0 & \text{otherwise} \,. 
  \end{cases}
 \]
Now multiply each power series by $(-z)^{t}$, and aggregate the products:
  \begin{align*}
(-z)^{t} P^B_I 
= (-z)^{t} \sum_{i,j} \beta^B_{i,j}(I) y^{i+j} z^i 
= \sum_{j \in \JJ} (-z)^{t-j} \big((-z)^{j} P^B_{\lin I{j}} - (-z)^{j} P^B_{B_1\lin I{j-1}} \big) \,.
  \end{align*}

Multiply the last equality by $H_B(-yz)$ and invoke \ref{ch:linear}(1) to get 
  \begin{align*}
(-z)^{t} H_B(-yz) P^B_I 
& = \sum_{j \in \JJ} (-z)^{t-j} \big((-z)^{j} H_B(-yz) P^B_{\lin I{j}} - (-z)^{j} H_B(-yz) P^B_{B_1\lin I{j-1}} \big)
  \\	
& = \sum_{j \in \JJ} (-z)^{t-j} (H_{\lin I{j}} (-yz) - H_{B_1\lin I{j-1}} (-yz) ) 
  \\	
& = \sum_{j \in \JJ} (-z)^{t-j} \big(H_{\lin I{j}} (-yz) - (H_{\lin I{j-1}}(-yz) - n_{j-1} (-yz)^{j-1})\big)
  \\	
& = \sum_{j \in \JJ} (-z)^{t-j} (H_{B/\lin I{j-1}}(-yz) - H_{B/\lin I{j}} (-yz) + n_{j-1} (-yz)^{j-1}) 
  \end{align*}
The preceding equalities, multiplied by $(-z)^{-t} (1+yz)^e$, yield  \eqref{eq:component2}; see \ref{ch:gradedBetti}(2).
  \end{proof}

  \begin{chunk}
    \label{ch:det}
Let $Q$ be a noetherian ring and $L$ a nonzero $Q$-module.  The maximal length of $Q$-regular sequences 
in $\Ann_Q(L)$ is called the \emph{grade} of $L$; it is denoted by $\grade L$ and satisfies 
$\grade L \le \pd_Q L$. If equality holds (and $\grade L = g$), then $L$ is said to be \emph{perfect} (of 
grade $g$).  When $Q$ is regular, $\grade L = \height \Ann_Q(L)$ holds, and $L$ is perfect of grade $g$ if and 
only if $L$ is Cohen-Macaulay and $\dim L = \dim B - g$.

Let $\bsx := \{x_1,\dots,x_e\}$ be a set of nonzero elements of $Q$. Let $X = \begin{bmatrix} x_{i,j} \end{bmatrix}$ 
be an $s\times (s + h - 1)$ matrix with entries from $\bsx \cup \{0\}$, and put
  \[
\bsx' := \{x_1,\dots,x_h\}\,, \ 
\Delta^X_n := \{x_{i,j}\}_{j-i+1=n} \text{ for } n \in [1,h]\,,  \text{ and } \, \Delta^X := \bigcup_{n=1}^h \Delta^X_n .
  \]  

We say that $X$ is \emph{adequate for} $\bsx$ if the following conditions are satisfied: 
  \[
\Delta^X \subseteq \bsx \,; \quad
x_l \in \Delta^X_n \iff  l = n \in [1,h] \,; \quad 
\big{|} \{ \, (i,j)  :  x_{i,j} = x_n \in \bsx \setminus \bsx' \big\} \big{|} \le 1  \,.
  \]
  
Let $I(X,Q)$ denote the ideal of $Q$ generated by the $s \times s$ minors of $X$.
   \end{chunk}

Eagon and Northcott \cite[Theorem 2]{EN1} proved that $I(X)$ is perfect of grade $h$ if the entries of 
$X$ are distinct indeterminates.  In the results that follow we describe families of ideals with similar 
properties that are parametrized by fewer variables.  

  \begin{Lemma}
      \label{lem:adequate}
Let $k$ be a field, $\bsx := \{x_1,\dots,x_e\}$ a set of indeterminates, and put $B := k[\bsx]$.  Let $X$ 
be an $s\times (s + h - 1)$ matrix that is adequate for $\bsx$ (see \emph{\ref{ch:det}}).

The module $A(X) := B/I(X)$ is perfect of grade $h$, the ideal $I(X)$ is $s$-linear,~and 
 \begin{equation}
   \label{eq:adequate1} 
(-z)^{s} P^B_{I(X)} (-yz) =  1 - (1 + yz)^{h} \sum_{i=0}^{s-1} \binom{h-1+i}{i} (-yz)^i  \,.
 \end{equation}

In the special case $h = e$ one has $I(X) = (\bsx)^s$.
  \end{Lemma}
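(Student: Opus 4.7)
The plan is to identify the Eagon--Northcott complex associated to $X$ as the minimal graded free resolution of $B/I(X)$. Since this complex has length $h$ and all its differentials are matrices of linear forms in $B$, establishing acyclicity will simultaneously yield that $B/I(X)$ is perfect of grade $h$, that the resolution is minimal (so $I(X)$ is $s$-linear), and --- via the known ranks and internal degrees of the Eagon--Northcott complex --- the Poincar\'e series formula.

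By the classical Eagon--Northcott acyclicity criterion the complex resolves $B/I(X)$ iff $\grade I(X) \ge h$; combined with the universal bound $\height I(X) \le h$ for maximal-minor ideals of an $s \times (s+h-1)$ matrix, this gives $\grade I(X) = \height I(X) = h$. First I would handle the special case $h = e$ (also the last claim of the lemma): under the adequacy hypotheses with $\bsx = \bsx'$, the matrix $X$ is forced into the pure ``Hankel'' form whose $(i,j)$ entry is $x_{j-i+1}$ for $1 \le j-i+1 \le h$ and zero otherwise. A direct argument --- phrased via apolarity applied to $(x_1 + x_2 t + \cdots + x_h t^{h-1})^s$, or equivalently via the classical theory of catalecticant matrices --- then shows $I(X) = (\bsx)^s$. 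In the general case, setting $x_{h+1},\ldots,x_e$ to $0$ specializes $X$ to this pure Hankel matrix $X_0$ in $B/(x_{h+1},\ldots,x_e) \cong k[\bsx']$, so $I(X) + (x_{h+1},\ldots,x_e) = (\bsx')^s B + (x_{h+1},\ldots,x_e)$ --- an ideal with radical $(\bsx)$ and thus of height $e$ in $B$. Subadditivity of heights in the Cohen--Macaulay ring $B$ then yields $\height I(X) \ge e - (e-h) = h$.

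For the Poincar\'e series, I would exploit that the Eagon--Northcott complex has ranks and internal degrees determined by $s,h,e$ alone, not by the specific adequate $X$. Hence $H_{B/I(X)}(w)$ is the same for every adequate $X$, and can be computed from the ``pure'' choice whose entries lie in $\bsx' \cup \{0\}$: there $B/I(X) \cong (k[\bsx']/(\bsx')^s) \otimes_k k[x_{h+1},\ldots,x_e]$ as graded $k$-vector spaces, yielding
\[
H_{B/I(X)}(w) = \frac{\sum_{j=0}^{s-1}\binom{h-1+j}{j}w^j}{(1-w)^{e-h}}.
\]
Substituting into $H_{I(X)} = (1-w)^{-e} - H_{B/I(X)}$ and then into the $s$-linearity identity $(-z)^s P^B_{I(X)}(y,z) = (1+yz)^e H_{I(X)}(-yz)$ from \ref{ch:linear}(1), the closed form \eqref{eq:adequate1} falls out after routine algebraic simplification.

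The main obstacle is the case $h = e$: directly verifying that the $s \times s$ minors of the pure Hankel matrix generate $(\bsx)^s$. The containment $I(X) \subseteq (\bsx)^s$ is immediate from degree counting, but the reverse requires either a careful combinatorial expansion of the minors or an appeal to apolarity / Macaulay inverse systems. Once this is secured, the remaining steps --- acyclicity, linearity, and the Hilbert-series manipulation --- are routine applications of standard machinery.
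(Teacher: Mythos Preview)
Your approach is correct but runs the key specialization in the opposite direction from the paper's. You specialize the adequate matrix $X$ \emph{downward} to the pure Hankel matrix $X_0$ by sending $x_{h+1},\dots,x_e\mapsto 0$, then lift the grade bound back up via Krull's height theorem; this forces you to establish the Hankel identity $I(X_0)=(\bsx')^s$ independently, which you rightly flag as the main obstacle. The paper instead specializes \emph{from} the fully generic matrix $Y$ (with $s(s+h-1)$ distinct indeterminate entries) \emph{down to} $X$: by Eagon's result, a specific set $\bsz_Y$ of linear forms---identifying all entries along each diagonal $n\in[1,h]$ with the first-row entry and killing the off-diagonal entries---is $A(Y)$-regular with quotient $C/(C_1)^s$ for $C=k[x_1,\dots,x_h]$. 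Adequacy of $X$ is exactly the condition that $\Ker\big(k[\bsy]\to k[\bsx],\,y_{i,j}\mapsto x_{i,j}\big)$ is generated by a \emph{subset} $\bsz_X\subseteq\bsz_Y$, so $A(X)\cong A(Y)/\bsz_X A(Y)$ is perfect of grade $h$, and the remaining forms $\bsz_Y\setminus\bsz_X$ are still $A(X)$-regular with quotient $C/(C_1)^s$, giving $P^B_{I(X)}=P^C_{(C_1)^s}$ directly without ever writing down the Eagon--Northcott complex. The paper's route thus absorbs your Hankel obstacle into a single citation: Eagon's regular sequence already encodes $I(X_0)=(\bsx')^s$ as the extreme case $\bsz_X=\bsz_Y$. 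Your route is more hands-on with the Eagon--Northcott complex itself and yields the Hilbert-series identity transparently, but you pay for that by having to prove the catalecticant fact separately.
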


    \begin{proof}
Put $A(X) := k[\bsx]/I(X)$. For each $n \in [1,h]$ choose $i \in [1,s]$ such that $x_{i,i+n-1} = x_n$.  
Let $\sigma \colon k[\bsx] \to k[\bsx]$ be the $k$-algebra map that swaps  $x_{1,i + n -1}$ and 
$x_{1,n}$ for $i \in [1,h]$ and fixes the rest of $\bsx$.  As $\sigma$ induces an isomorphism of 
$k$-algebras $A(X) \cong A(\sigma(X))$, we may suppose that $x_{1,n} = x_n$ holds for $n \in [1,h]$.

Let $Y = \begin{bmatrix} y_{i,j} \end{bmatrix}$ be an $s\times (s + h - 1)$ matrix with distinct entries 
from a set $\bsy$ of $s\times (s + h - 1)$ indeterminates such that $\bsy \cap \bsx =\emptyset$; put
$\bsy' := \{y_{1,1},\dots, y_{1,h}\}$ and $C := k[\bsy']$.  The module $A(Y)$ is perfect of grade $h$,  
the following is $A(Y)$-regular
  \[
\bsz_Y  := \{ y_{i,i+n-1} - y_{1,n} \}_{i \ne 1, n \in [1,h]} \cup \{ y_{i,j} \notin \Delta_Y \} \,,
  \]
and $A(Y)/ \bsz_Y A(Y) \cong C/(C_1)^s$ holds; see Eagon \cite[Proof of Theorem~1]{Ea}.

Let $\varkappa \colon k[\bsy] \to k[\bsx] = B$ be the $k$-algebra map with $y_{i,j} \mapsto x_{i,j}$.  Since 
$\bsx$ is adequate for $X$, the ideal $\Ker(\varkappa)$ is generated by the following set of linear forms:
 \begin{equation*}
\bsz_X :=  
\{\, y_{i,i+n-1} - y_{1,n} \mid x_{i,i+n-1} = x_{n}\, \}_{i \ne 1, n \in [1,h]} 
 \cup \{\, y_{i,j} \mid x_{i,j} = 0\, \}  
  \end{equation*}
The set $\bsz_X$ is $A(Y)$-regular (as it is a part of $\bsz_Y$) and $A(X) \cong A(Y)/\bsz_X A(Y)$ holds; 
thus $A(X)$ is perfect of grade $h$.  In addition,  the set of linear forms
$\bsz := \varkappa(\bsz_Y \setminus \bsz_X)$ is $A(X)$-regular with $A(X)/ \bsz A(X) \cong C/(C_1)^s$.
Therefore $A(X)$ is perfect of grade $h$ and $P^B_{A(X)} = P^{C}_{C/(C_1)^s}$ holds.  This implies 
$P^B_{I(X)} = P^{C}_{(C_1)^s}$; in particular, $I(X)$ is $B$-linear, by \ref{ch:linear}(2).  Setting 
$\bsy'' := \bsy \setminus \bsy'$ and applying \eqref{eq:component2} with $\JJ = \{ s \}$ yields 
 \[
(-z)^{-s} P^B_I (y, z) = (1 + yz)^{e} \big(H_{k[\bsy]}(-yz) - H_{k[\bsy'']}(-yz) H_{C/(C_1)^s} (-yz) \big) \,.
  \]
It remains to plug in the well known expressions of the Hilbert series involved.
     \end{proof}

  \begin{chunk}
      \label{ch:generic}
Let $\bsx = \{x_1,\dots,e_e\}$ be a set of indeterminates, $D := \ZZ[\bsx]$, $I$ a homogeneous ideal, and 
$g := \grade D/I$.  When $Q$ is a noetherian ring and $\bsu=\{u_1,\dots,u_e\} \subset Q$, put $I(\bsu, Q) := Q \phi(I)$, 
where $\phi \colon D \to Q$ is the ring map with $\phi(x_i) = u_i$ for~$i \in [1,e]$.  
    \begin{enumerate}[\rm(1)]
   \item
When $D/I$ is $\ZZ$-free the following conditions are equivalent: (i) $D/I$ is perfect of grade $g$; 
(ii) $K[\bsx] / I(\bsx, K[\bsx])$ is perfect of grade $g$ for every finite prime field~$K$; 
(iii) $K[\bsx] / I(\bsx, K[\bsx])$ is perfect of grade $g$ for every noetherian ring $K$.
   \end{enumerate}

The ideal $I$ called \emph{generically perfect} of grade~$g$ if it satisfies the conditions in~(1).
In case it does and $I(u,Q) \ne Q$ holds, $Q/I(u,Q)$ has the following properties:
   \begin{enumerate}[\rm(1)]
   \item[\rm(2)]
$\grade Q/I(u,Q) \le g$, and $Q/I$ is perfect if equality holds.  
   \item[\rm(3)]
If $(Q,\fq,k)$ is a local ring, $\bsu \subset \fq$, and $\grade Q/I(\bsu,Q) \ge g$ holds, then one has
   \[
P^Q_{Q/I(\bsu,Q)}(z) = P^{B^{\ell}}_{B^{\ell}/I(\bsx, B)^{\ell}}(z)  = P^{B}_{B/I(\bsx, B)}(1,z)\quad\text{with}\quad B := k[\bsx] \,.
   \]
   \end{enumerate}

See Hochster \cite[Theorem 1]{Ho}, complemented by \cite[Proposition~20]{HE} for Part~(1);
\cite[Proposition~4]{EN2} for Part (2); \cite[Proof of Theorem~6.2]{Av:small} and \ref{ch:gradedBetti}(1) for Part (3). 
  \end{chunk}

  \begin{Corollary}
      \label{cor:generic}
Let $\bsx$ be a set of indeterminates, $X$ an $s\times (s + h - 1)$ matrix that is adequate for $\bsx$
(see \emph{\eqref{ch:det}}), and put $D := \ZZ[\bsx]$.
   \begin{enumerate}[\rm(1)]
    \item
The ideal $I := I(X, D)$ is generically perfect of grade $h$ (see \emph{\ref{ch:generic}}).
    \item
The ideal $J := I(X, D) + (\bsx)^{t+1}$ is generically perfect of grade $e$.
   \end{enumerate}
   \end{Corollary}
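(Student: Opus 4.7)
The plan is to lift the field-level analysis of Lemma~\ref{lem:adequate} to the base ring $\ZZ$, and then invoke the criteria for generic perfection recorded in \ref{ch:generic}(1).

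For (1), every step in the proof of Lemma~\ref{lem:adequate} is characteristic-free: the algebra automorphism $\sigma$ is a permutation of indeterminates, and the definitions of $\bsz_X$ and $\bsz_Y$ depend only on the entry pattern of $X$. Thus, with $Y$ an $s\times(s+h-1)$ matrix of distinct indeterminates $\bsy$ and $E := \ZZ[\bsy]$, we get a $D$-algebra isomorphism $D/I(X,D) \cong E/(I(Y,E)+\bsz_X E)$. The Eagon-Northcott complex of $Y$ is defined over $\ZZ$, consists of $\ZZ$-free modules, and resolves $E/I(Y,E)$ in length $h$, so $I(Y,E)$ is generically perfect of grade $h$ (a special case of Hochster's theorem cited in \ref{ch:generic}(1)). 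The sequence $\bsz_X$ is a sub-sequence of Eagon's $\bsz_Y$ from \cite{Ea}, whose $E/I(Y,E)$-regularity over $\ZZ$ follows from the same combinatorial argument. Modding out by $\bsz_X$ amounts to identifying pairs of variables and deleting others, so it preserves $\ZZ$-freeness; perfection is preserved by any quotient by a regular sequence, establishing (1).

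For (2), we deduce the claim from (1) via \ref{ch:generic}(1). Homogeneity of $I$ and (1) imply that $D/I$ is a graded $\ZZ$-free module, and consequently its truncation $D/J \cong \bigoplus_{j=0}^{t}(D/I)_j$ is $\ZZ$-free. For every finite prime field $K$, the ring $K[\bsx]/J_K$ is annihilated by $(\bsx)^{t+1}$, hence finite-dimensional over $K$, hence of depth $0$ over $K[\bsx]$; the Auslander-Buchsbaum formula then gives $\pd_{K[\bsx]} K[\bsx]/J_K = e$. Since $\sqrt{J_K} = (\bsx)$ and $K[\bsx]$ is Cohen-Macaulay, we also have $\grade_{K[\bsx]} K[\bsx]/J_K = e$. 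Thus $J_K$ is perfect of grade $e$ for every finite prime field $K$, and the equivalence (ii)$\Rightarrow$(i) in \ref{ch:generic}(1) yields that $J$ is generically perfect of grade $e$.

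The main obstacle is in part (1): verifying that the generic-perfection results of \cite{EN1, Ea, Ho} go through at the level of $\ZZ$ rather than only over a field. Tracking down the precise formulations and scope of these characteristic-free statements is the bulk of the work, whereas the combinatorial translation between $A(X)$ and $A(Y)$ is already carried out inside the proof of Lemma~\ref{lem:adequate}. Once (1) is in place, part (2) is a short consequence, since the Artinian nature of $K[\bsx]/J_K$ forces maximal projective dimension over $K[\bsx]$ for free.
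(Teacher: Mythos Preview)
Your argument is correct, but for part~(1) you take a different route from the paper.  You lift the entire specialization argument of Lemma~\ref{lem:adequate} to the base ring $\ZZ$: this requires knowing that Eagon's sequence $\bsz_Y$ (hence its subsequence $\bsz_X$) remains regular on $E/I(Y,E)$ over $\ZZ$, not just over a field.  That is true, but ``follows from the same combinatorial argument'' skips a step---one really uses that $E/I(Y,E)$ is $\ZZ$-free and that regularity over every residue field then forces regularity over $\ZZ$, with the quotient again $\ZZ$-free.

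The paper's proof of (1) avoids this lift altogether.  Lemma~\ref{lem:adequate} already gives, for \emph{every} field $k$, that $k[\bsx]/I(X,k[\bsx])$ is perfect of grade $h$, and (via the explicit formula~\eqref{eq:adequate1}, or the isomorphism $A(X)/\bsz A(X)\cong C/(C_1)^s$) that its Hilbert series is independent of $\operatorname{char} k$.  Thus each finitely generated abelian group $(D/I)_j$ has $\rank_{\ZZ/p\ZZ}\bigl((D/I)_j\otimes\ZZ/p\ZZ\bigr)=\rank_{\QQ}\bigl((D/I)_j\otimes\QQ\bigr)$ for every prime $p$, so $(D/I)_j$ is free; now \ref{ch:generic}(1) applies directly.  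This is shorter and keeps all the hard work inside Lemma~\ref{lem:adequate}, whereas your approach buys a more self-contained argument at the cost of revisiting the integral Eagon--Northcott machinery.

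For part~(2) your proof coincides with the paper's: both observe that $D/J$ is the truncation $\bigoplus_{j\le t}(D/I)_j$ of the $\ZZ$-free module $D/I$, hence $\ZZ$-free, and that $k[\bsx]/J(\bsx,k[\bsx])$ is Artinian, hence perfect of grade~$e$, for every field~$k$.
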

 
     \begin{proof}
(1)  One has $k \otimes_{\ZZ} I = I(X, k[\bsx])$ for every field $k$.  From Lemma 
\ref{lem:adequate} we know that $I(X, k[\bsx])$ is perfect of grade $g$ and that
that $H_{k[\bsx]/I(X, k[\bsx])}(y)$ does not depend on $k$; thus for each $j \in \ZZ$ and every 
prime number $p$ the $\ZZ/p\ZZ$-rank of $(D/I)_j / p (D/I)_j$ equals the $\QQ$-rank of 
$(D/I)_j \otimes_{\ZZ} \QQ$.  Therefore $D/I$ is $\ZZ$-free.

(2)  As $(D/J)_{j} = (D/I)_{j}$ for $j \le s$ and $(D/J)_{j} = 0$ for $j > s$, Part (1) shows that
$D/J$ is $\ZZ$-free.  It is clear that $k[\bsx]/(I(X, k[\bsx]) + (\bsx)^{s+1})$ is perfect of grade~$e$.
     \end{proof}

 \begin{chunk}
      \label{ch:golod}
Recall that a ring homorphism $Q \tra Q/J$ is said to be \emph{Golod} if $J \subseteq \fq^2$ and 
$P^{Q/J}_k(z) = P^Q_k(z)/(1 -z^2 P^Q_J)$ holds; see Levin \cite{Le}.  Thus a local ring $S$ is Golod if 
and only a minimal Cohen presentation $Q \tra \wh S$ is a Golod homomorphism;~cf.~\ref{ch:relations}.
  \end{chunk}

  \begin{Theorem}
      \label{thm:generic}
Let $(Q,\fq,k)$ be a local ring, $\bsu=\{u_1,\dots,u_e\}$ a $Q$-regular subset of $Q$, and $U$ 
an $s\times (s + h - 1)$ matrix with entries in $\{\bsu\}\cup\{0\}$ that is adequate for~$\bsu$; 
see \emph{\ref{ch:det}}.  Let $I(U)$ be the ideal of $Q$ generated by the $s \times s$ minors of $U$.
   \begin{enumerate}[\rm(1)]
     \item
The ideal $I := I(U)$ is perfect of grade $h$, and $P^Q_I(z) = P^B_{I(X)}(1,z)$; see \eqref{eq:adequate1}.
    \item
The ideal $J := I(U)+(\bsu)^{s+1}$ is perfect of grade $e$, and 
  \begin{equation*}
\begin{aligned}
z^2P^Q_{J} 
& = \frac{1} {(-z)^{s-2}} + \frac{(1+z)^{h+1}}{(-z)^{s-1}}\left(\sum_{i=0}^{s-1}\binom{h-1+i}{i}(-z)^{i}\right) 
  \\
& \phantom{\ = } - \frac{(1+z)^{e}}{(-z)^{s-1}}\left(\sum_{i=0}^{s}\binom{e-1+i}{i}(-z)^{i} - \binom{h-1+s}{s}(-z)^s\right) .
    \end{aligned}
   \end{equation*}
    \item
When $s \ge 2$ the canonical maps $Q/I \tla Q \tra Q/J$ are Golod homomorphisms.
   \end{enumerate}
   \end{Theorem}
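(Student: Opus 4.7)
All three claims are reduced to the generic polynomial setting. Let $B := k[\bsx]$ with $\bsx := \{x_1,\dots,x_e\}$ and let $X$ be the matrix obtained from $U$ by replacing each $u_i$ by the corresponding indeterminate $x_i$. Parts (1) and (2) are proved by combining the generic perfection statements from Corollary \ref{cor:generic} with the $Q$-regularity of $\bsu$, then invoking \ref{ch:generic}(3) to transfer Poincar\'e series from the generic case to $Q$; the explicit formulas come from Lemma \ref{lem:adequate} and Proposition \ref{prop:component}. Part (3) is the hardest step and requires establishing Golodness of $B\to B/I(X,B)$ and $B\to B/(I(X,B)+(\bsx)^{s+1})$ in the generic case and then descending.

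For (1): Corollary \ref{cor:generic}(1) gives that $I(X,D)$ is generically perfect of grade $h$ in $D := \ZZ[\bsx]$. Since $I(U)\subseteq(\bsu)$ and $\bsu$ is $Q$-regular, the upper bound $\grade_Q(Q/I(U))\le h$ from \ref{ch:generic}(2) is matched by an equality: this grade is computable via the Eagon--Northcott-type complex specialized at $\bsu$, whose acyclicity is preserved because $\bsu$ is regular. Perfection then follows, and \ref{ch:generic}(3) yields $P^Q_I(z) = P^B_{I(X,B)}(1,z)$, which is made explicit by Lemma \ref{lem:adequate}. For (2): Corollary \ref{cor:generic}(2) provides generic perfection of grade $e$ for $J(X,D) := I(X,D)+(\bsx)^{s+1}$, while $J\supseteq(\bsu)^{s+1}$ together with $Q$-regularity of $\bsu$ forces $\grade_Q(Q/J) = e$. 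To evaluate $P^Q_J = P^B_{J(X,B)}(1,z)$, I will observe that $J(X,B)$ is componentwise linear: its nonzero components are $\lin{J}{s} = I(X,B)$, which is $s$-linear by Lemma \ref{lem:adequate}, and $\lin{J}{s+1} = (\bsx)^{s+1}$, which is $(s+1)$-linear. Hence $\JJ = \{s,s+1\}$ in \eqref{eq:component1}, and substituting the Hilbert series of $B/I(X,B)$ (obtained from Lemma \ref{lem:adequate}) and of $B/(\bsx)^{s+1}$ into \eqref{eq:component2}, then clearing the $(1+yz)^e$ factor and specializing $y=1$, yields after routine binomial manipulation the displayed formula for $z^2P^Q_J$.

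For (3): The condition $J\subseteq\fq^2$ is immediate since $s\ge 2$. By \ref{ch:golod}, it suffices to prove the identities $P^Q_k = P^{Q/I}_k\cdot(1-z^2P^Q_I)$ and $P^Q_k = P^{Q/J}_k\cdot(1-z^2P^Q_J)$. My plan is first to verify these in the generic case $Q=B$, where they become rational-function equalities that read off Golodness of $B/I(X,B)$ and $B/J(X,B)$. For $B/I(X,B)$, Golodness is the known statement that the determinantal ring defined by $s\times s$ minors ($s\ge 2$) of the Eagon--Northcott-type matrix admits trivial Massey products on the Koszul homology, deducible from the explicit multiplicative structure of the Eagon--Northcott resolution. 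For $B/J(X,B)$, I will construct a mapping cone of the inclusion $(\bsx)^{s+1}\hookrightarrow I(X,B)+(\bsx)^{s+1}$ into the Eagon--Northcott complex and show all higher products on the resulting DG-module structure vanish, using that $(\bsx)^{s+1}$ lives in degree $\ge s+1$ while the generators of $I(X,B)$ live in degree $s$. The descent from $B$ to $Q$ is then handled by \ref{ch:generic}(3), which furnishes the equality of Poincar\'e series along the specialization $\bsx\mapsto\bsu$, combined with the standard preservation of Golod homomorphisms under specialization along a regular sequence (Levin).

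The principal obstacle is the verification of generic Golodness of $B/J(X,B)$: unlike $B/I(X,B)$, the ideal $J(X,B)$ is not resolved by a pure Eagon--Northcott complex, and the vanishing of Massey products on the mapping cone requires careful bookkeeping of degrees against the mixed $s$- and $(s+1)$-linear components. If a direct Massey-product argument proves unwieldy, an alternative is to verify the Poincar\'e series identity numerically on both sides using the explicit formula from part (2) and the Eagon--Northcott-type minimal resolution of $B/J(X,B)$.
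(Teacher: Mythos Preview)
Your treatment of parts (1) and (2) matches the paper's approach closely: generic perfection from Corollary~\ref{cor:generic}, transfer of Poincar\'e series via \ref{ch:generic}(3), and the componentwise-linear decomposition $\lin{J}{s}=I(X)$, $\lin{J}{s+1}=(\bsx)^{s+1}$ feeding into Proposition~\ref{prop:component}. (For the grade equality in (1) the paper cites Northcott \cite[Proposition~3]{No} rather than arguing acyclicity directly, but your sketch points to the same mechanism.)

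The real divergence is in part (3), and here you are working much harder than necessary. You propose to establish Golodness of $B/I(X,B)$ and $B/J(X,B)$ by analyzing Massey products on an Eagon--Northcott complex and on a hand-built mapping cone, and you flag this as the principal obstacle. The paper bypasses all of this with a single citation: Herzog--Reiner--Welker \cite[Theorem~4]{HRW} proves that if an ideal in a polynomial ring is componentwise linear, then the quotient is a Golod algebra. Since you have already shown in (2) that $J(\bsx,k[\bsx])$ is componentwise linear (and $I(X)$ is linear, hence componentwise linear by \ref{ch:linear}(2)), Golodness of both graded algebras is immediate. The passage from the graded algebra to the local map $Q\to Q/J$ is then handled by \cite[Theorem~6.2]{Av:small}, which gives the equivalence directly; your appeal to a Levin-type preservation under specialization is in the right spirit but less precise.

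Your fallback plan of verifying the Poincar\'e identity numerically is also incomplete as stated: it would require an independent computation of $P^{Q/J}_k$, which you do not supply. The Herzog--Reiner--Welker shortcut makes this unnecessary.
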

 
    \begin{proof}
(1)  Let $S$ denote the associated graded ring of the ideal $(\bsu)$ and $a^*$ the initial form of $a \in Q$.  Note 
that $\bsu^* := \{u_1^*, \dots , u_e^*\}$ is algebraically independent over the subring $K := Q/(u_1,\dots,u_e)$; 
we identify $S$ and $K[\bsx]$, write $X$ for the matrix $U^* = \begin{bmatrix} u_{i,j}^* \end{bmatrix}$, and do 
not distinguish between $I(U^*,S)$ and $I(X,K[\bsx]$), see~\ref{ch:det}.  

It is clear that $X$ is adequate for $\bsx$; then $I(\bsx,\ZZ[\bsx])$ is generically perfect, by Corollary 
\ref{cor:generic}(1), and hence $I(X,K[\bsx])$ is perfect of grade $h$, by \ref{ch:generic}(1).  Therefore so is 
$I(U)$ (see Northcott, \cite[Proposition~3]{No}),  and \ref{ch:generic}(3) yields $P^Q_I(z) = P^B_{I(X)}(1,z)$. 

(2)  The ideal $J$ is perfect of grade $e$, by Corollary \ref{cor:generic}(2) and \ref{ch:generic}(2), so we 
have $P^Q_{J} = P^{k[\bsx]}_{J(\bsx, k[\bsx])}(1,z)$, by \ref{ch:generic}(3). The ideal $J(\bsx, k[\bsx])$ is 
componentwise linear, with $\lin {J(\bsx,k[\bsx])}{s} = I(X)$ and $\lin {J(\bsx,k[\bsx])}{s+1} = (\bsx)^{s+1}$.
Applying Proposition \ref{prop:component} with $\JJ = \{s,s+1\}$ and substituting the expressions for 
$P^{k[\bsx]}_{k[\bsx/I(\bsx)}$ and $P^{k[\bsx]}_{k[\bsx]/(\bsx)^{s+1}}(y,z)$, from Lemma \ref{lem:adequate},
into Formula \eqref{eq:component2} yields $P^{k[\bsx]}_{J(\bsx, k[\bsx])}(y,z)$.  Now refer to \ref{ch:gradedBetti}(1). 
 
(3)  The map $Q \tra Q/J$ is Golod if and only if the ring $(k[\bsx]/J(\bsx,k[\bsx]))^{\ell}$ is Golod  
(see \cite[Theorem~6.2]{Av:small}), if and only if the algebra $k[\bsx]/J(\bsx,k[\bsx])$ is Golod  
(see \ref{ch:golod}); this algebra is Golod because the ideal $J(\bsx,k[\bsx])$ is componentwise
 linear; therefore Herzog, Reiner, and Welker, \cite[Theorem 4]{HRW} applies. 
    \end{proof}

\end{document}